\DeclareMathOperator{\dist}{dist}
\DeclareMathOperator{\supp}{supp}
\DeclareMathOperator{\graph}{graph}
\DeclareMathOperator{\divergenz}{div}
\def\ol#1{\overline{#1}}
\def\R{{\mathbb{R}}}
\def\N{{\mathbb{N}}}
\def\O{{\mathcal{O}}}
\def\P{{\mathcal{P}}}
\def\theta{{\vartheta}}
\def\phi{{\varphi}}
\def\epsilon{{\varepsilon}}
\def\dt{{\frac{d}{dt}}}
\def\tdt{{\tfrac{d}{dt}}}
\def\fracd#1#2{{\frac{\displaystyle #1}{\displaystyle #2}}}
\def\tfracp#1#2{{\tfrac{\partial #1}{\partial #2}}}
\def\umbruch{{\displaybreak[1]}}
\long\def\neueZeile{{\rule{0mm}{1mm}\\[-3.25ex]\rule{0mm}{1mm}}}
\long\def\neueZeilealt{{\rule{0mm}{1mm}\\\rule{0mm}{1mm}}}
\def\emph#1{\textbf{#1}}
\def\naM{\nabla}
\newcommand{\Om}{\Omega}
\newcommand{\nuob}{\nu_\Ob}
\newcommand{\nuOb}{\nu_\Ob}
\newcommand{\de}{\delta}
  \newcommand{\nabar}{\bar\nabla}
\newcommand{\la}{\langle}
\newcommand{\ra}{\rangle}
\newcommand{\pxi}{{\nabla_i}}
\newcommand{\pxj}{{\nabla_j}}
\newcommand{\aleps}{{\alpha_\eps}}
\newcommand{\beps}{{\beta_\eps}}
\newcommand{\bepsp}{{\beta_\eps'}}
\newcommand{\bepspp}{{\beta_\eps''}}
\newcommand{\abs}[1]{\left\vert#1\right\vert} 
\newcommand{\eps}{\varepsilon}
\newcommand{\na}{\nabla}
\newcommand{\Ob}{\mathcal{O}}
\newcommand{\Pro}{\mathcal{P}}
\newcommand{\ob}{\mathcal{O}}
\newcommand{\beq}{\begin{equation}}
\newcommand{\eeq}{\end{equation}}
\newcommand{\beqs}{\begin{equation}}
\newcommand{\eeqs}{\end{equation}}
\newcommand{\pO}{{\partial \O}}
\newcommand{\beqa}{\begin{equation}\begin{aligned}}
\newcommand{\eeqa}{\end{aligned}\end{equation}}
\newcommand{\loc}{\text{loc}}
\newcommand{\ben}{\begin{enumerate}}
\newcommand{\een}{\end{enumerate}}
\newcommand{\Mth}{M_t\cap\left\{x^{n+2}\ge \ell\right\}}
\newcommand{\Mnullth}{M_0\cap\left\{x^{n+2}\ge \ell\right\}}
\newcommand{\beqas}{\begin{equation*}\begin{aligned}}
\newcommand{\eeqas}{\end{aligned}\end{equation*}}
\mathchardef\ordinarycolon\mathcode`\:
\newtheorem{theorem}{Theorem}[section]
\newtheorem{lemma}[theorem]{Lemma}
\newtheorem{proposition}[theorem]{Proposition}
\newtheorem{corollary}[theorem]{Corollary}
\newtheorem{definition}[theorem]{Definition}
\newtheorem{remark}[theorem]{Remark}
\newtheorem{assumption}[theorem]{Assumption}
\numberwithin{equation}{section}
\begin{document}

\title[Mean curvature flow with obstacles]{Weak solutions to Mean
  curvature flow respecting obstacles I: the graphical case}

\author{Melanie Rupflin}
\thanks{}
\address{Melanie Rupflin, Mathematisches Institut, Universit\"at Leipzig,
  Augustusplatz 10, 04109 Leipzig, Germany}
\curraddr{}
\def\LeipzigHome{@math.uni-leipzig.de}
\email{melanie.rupflin\LeipzigHome}

\author{Oliver C. Schn\"urer}
\address{Oliver C. Schn\"urer, Fachbereich Mathematik und Statistik,
  Universit\"at Konstanz, 78457 Konstanz, Germany}
\curraddr{}
\def\AmSeeHome{@uni-konstanz.de}
\email{Oliver.Schnuerer\AmSeeHome}
\thanks{}

\subjclass[2000]{53C44}

\date{\today.}

\dedicatory{}

\keywords{}

\begin{abstract}
  We consider the problem of evolving hypersurfaces by mean curvature
  flow in the presence of obstacles, that is domains which the flow is
  not allowed to enter. In this paper, we treat the case of complete
  graphs and explain how the approach of M. S\'aez and the second
  author \cite{OSMarielMCFwithoutSing} yields a global weak solution
  to the original problem for general initial data and onesided
  obstacles.
\end{abstract}

\maketitle

\tableofcontents

\section{Introduction}

Given a hypersurface in Euclidean space we investigate how one can
evolve this hypersurface by mean curvature flow if there are parts of
space, so called obstacles, that the hypersurface is forbidden from
entering.

To be more precise, let $\mathcal P$ be an open non-empty set in
Euclidean space, not necessarily connected, nor bounded or regular and
let $N_0$ be an initial hypersurface which is disjoint from $\mathcal
P$. We then would like to evolve $N_0$ by a family of hypersurfaces
$(N_t)_t$, locally described by parametrisations $F_t$, moving in
normal direction, in such a way that

\begin{enumerate}
\item $N_t$ satisfies (a weak form of) mean curvature flow 
  $$\frac{d}{dt}F=-H\nu$$ on the complement of the obstacle
  $\ol{\mathcal P}$.  
\item $N_t$ remains disjoint from the obstacle, $N_t\cap \mathcal
  P=\emptyset$.
\item In points where the hypersurface touches the (closure of the)
  obstacle, the hypersurface evolves by mean curvature flow if this
  makes the hypersurfaces lift off the obstacle, but remains
  stationary otherwise, i.\,e.{} for $p\in\bar{\mathcal P}\cap N_t$
  we would like to ask that
  $$\frac{d}{dt}F=\langle -H\nu,\nu_{\mathcal P}\rangle_+\cdot
  \nu_{\mathcal P} =(-H)_+\nu,$$ 
  where $\nu_ {\mathcal P}$ denotes the outwards pointing unit normal
  to $\partial {\mathcal P}$ (where defined) and $\langle
  a,b\rangle_+=\max(\langle a,b\rangle,0)$.
\end{enumerate}

A first approach to mean curvature flow with obstacles was carried out
by L. Almeida, A. Chambolle, and M. Novaga \cite{Almeida} who
constructed solutions based on a time-discretisation scheme for the
corresponding partial differential inequality and obtained in
particular short-time existence of $C^{1,1}$-solutions in certain
settings.  Furthermore, E. Spadaro \cite{SpadaroMeanConvex} considered
mean curvature flow with obstacles in order to investigate properties
of mean convex sets.  He used a time-discretisation to obtain a weak
mean curvature flow of Caccioppoli sets and the focus of his work is
on the properties of the limits as $t\to \infty$ of such weak
solutions.

In the present paper we show that the ideas of M. S\'aez and the
second author \cite{OSMarielMCFwithoutSing} introduced for the study
of standard mean curvature flow can be used to obtain a new approach
for mean curvature flow with obstacles that avoids the study of
singularities completely but allows us to show global existence of
weak solutions for essentially all (reasonable) initial data and
\textit{onesided} obstacles.

The basic idea of the construction is the following: Given any initial
($n$-di\-men\-sio\-nal) hypersurface $N_0\subset \R^{n+1} $ and an
obstacle $\P\subset \R^{n+1}$ we lift the problem to one dimension
higher by building complete graphs over both the obstacle and the
region enclosed by the initial hypersurface $N_0$ which contains the
obstacle, see Figure \ref{obst pic}.

We then consider the new and simpler problem of flowing a graphical
surface $M_0$ in the presence of a graphical obstacle $\Ob$ for which
we prove long-time existence of a viscosity solution. This solution of
the graphical problem is obtained as a limit of flows that do not
prohibit the penetration of the obstacle but only penalise it
appropriately. A key part of the analysis of these approximate
solutions carried out later on is to prove that they satisfy locally
uniform spatial $C^2$-estimates. This implies in particular that the
viscosity solution that we obtain is of class $C^{1,1}$ which, in view
of the analysis of the corresponding stationary problem of C. Gerhardt
\cite{CGGlobalC11}, is optimal.

Similarly to \cite{OSMarielMCFwithoutSing}, one can interpret the
projection of this graphical flow $(M_t)_t$ in $\R^{n+2}$
to $\R^{n+1}$ as a weak solution $(N_t)_t$ for the original problem of
evolving by mean curvature flow in $\R^{n+1}$ respecting the obstacle
$\P$.

After completion of our manuscript, we found out that a related
problem has been considered independently by G. Mercier and M. Novaga
\cite{MercierNovagaObstacleMCF}. While our focus is on the evolution
of complete graphs over time-dependent domains, their focus is on the
study of entire graphs that G. Mercier subsequently uses to construct
level set solutions to mean curvature flow with obstacles in 
\cite{MercierObstacleViscosity}.

In subsequent work we will relate our notion of a weak solution to
level set solutions of mean curvature flow respecting obstacles.

\section{Definition of a solution}

\begin{definition}[Initial data]
  \label{def: ini data}
  Given an open, possibly disconnected set $\Pro\subset\R^{n+1}$, we
  consider an initial hypersurface $N_0\subset\R^{n+1}$ which is
  disjoint from $\Pro\subset\R^{n+1}$ and an open, possibly unbounded
  and disconnected, set $\Omega_0\subset\R^{n+1}$, such that
  \[\partial\Omega_0=N_0\quad\text{and}\quad\Pro\subset\Omega_0.\]

  For the lifted problem in $\R^{n+2}$ we then consider initial data
  consisting of an obstacle $\Ob$ and an initial hypersurface $M_0$
  with the following properties.
  \begin{enumerate}[(i)]
  \item The obstacle $\Ob\subset\R^{n+2}$ is given as
    \[\Ob=\left\{\left(\hat x,x^{n+2}\right)\in\R^{n+2}\colon
      x^{n+2}<\psi(\hat x)\right\}\] for a function $\psi\in
    C^{1,1}_{\loc}(\P)$ which is proper and bounded above.\par In
    particular, $\psi(\hat x)\to-\infty$ for $\hat x\to\partial\Pro$
    or $|\hat x|\to\infty$.
  \item[(ii)] The initial hypersurface $M_0\subset\R^{n+2}$ is given
    as \[M_0=\graph u_0\] for a locally Lipschitz function
    $u_0\colon\Omega_0\to\R$ which is proper, bounded above and
    fulfils
    \[u_0\ge\psi\quad\text{in}\quad \Pro\subset\Omega_0.\]
  \end{enumerate}
\end{definition}

\begin{figure}[htb] 
 \includegraphics[height=5cm]{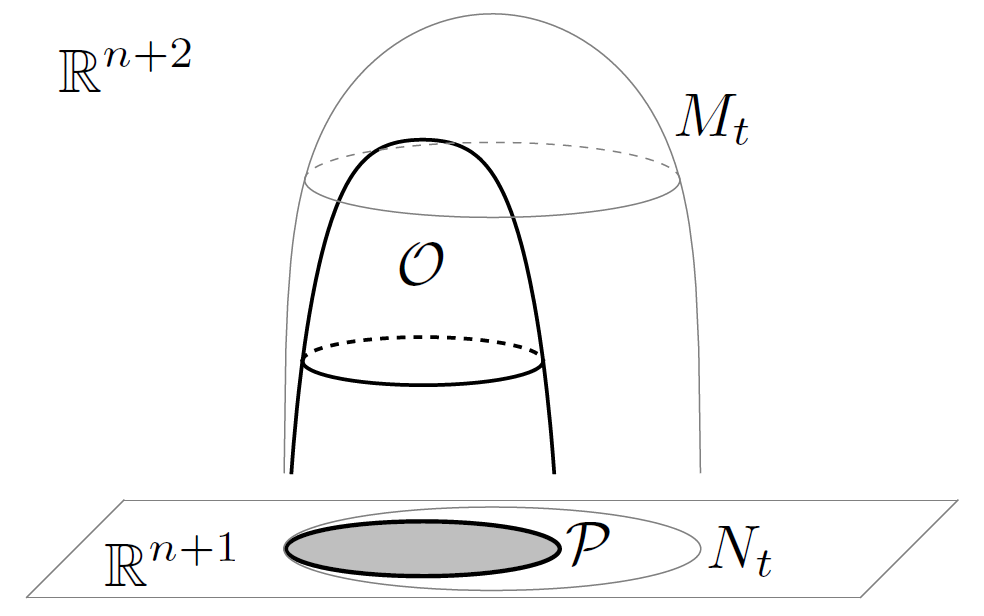}
 \caption{Graphical initial surface $M_0$ and obstacle $\partial\O$ in
   $\R^{n+2}$ associated with the original data $N_0$ and $\P$.} 
 \label{obst pic} 
\end{figure} 

We remark that there is no need to impose any regularity assumptions
on either $\Pro$ or $\partial \Omega_0$ in order to obtain such lifted
initial data $\Ob$ and $M_0$. Furthermore, $\O$ can and will be chosen
so that $\partial\O$ has uniformly bounded second fundamental form if
$\partial\P$ has uniformly bounded second fundamental form and a
tubular neighbourhood with thickness uniformly bounded below. An
analogous statement holds for $M_0$ and $N_0=\partial\Omega_0$.

We adapt the definition of a solution to graphical mean curvature flow
from \cite{OSMarielMCFwithoutSing} to the situation with obstacles. We
follow the convention that the obstacle lies below the solution, see
e.\,g.{} \cite{CGGlobalC11}, and therefore have to reflect the setting
in \cite{OSMarielMCFwithoutSing}. In particular the
evolving hypersurface $M_t=\graph u(\cdot,t)|_{\Omega_t}$ will be
represented by a pair $(\Omega,u)$, where
$\Omega\subset\R^{n+1}\times[0,\infty)$ is a subset of space-time,
$u(x,t)$ is defined for $(x,t)\in\Omega$ and $\Omega_t\subset\R^{n+1}$
is a time-slice of the space-time domain $\Om$ as defined below.  We
refer to \cite{OSMarielMCFwithoutSing} for a more in depth discussion
of the motivation behind the definition.

In the following definition we use standard notation: $H$ denotes the
mean curvature of $M_t$ and $v=\langle\nu,e_{n+2}\rangle^{-1}$. For
details we refer to Section \ref{nota sec}.
\begin{definition}[graphical mean curvature flow with obstacle]
  \label{def:gr-mcf-obst}
  \neueZeile
  \begin{enumerate}[(i)]
  \item\label{def sol domain} \textbf{Domain of definition:} Let
    $\Omega\subset\R^{n+1}\times[0,\infty)$ be a (relatively) open
    set. Set $\Omega_t:=\pi_{\R^{n+1}}\left(\Omega\cap\left(\R^{n+1}
        \times\{t\}\right)\right)$, where
    $\pi_{\R^{n+1}}\colon\R^{n+2}\to\R^{n+1}$ is the orthogonal
    projection to the first $n+1$ components. We require that
    $\mathcal P\subset\Omega_t$ for every $t\in[0,\infty)$.
  \item \textbf{The solution:} A function $u\colon\Omega\to\R$ is
    called a solution to graphical mean curvature flow in $\Omega$
    with initial value $u_0\colon\Omega_0\to\R$ and obstacle
    $(\mathcal P,\psi)$ or $\mathcal O$, if $u\in C^0_\loc(\Omega)$
    satisfies
    \begin{equation}
      \label{MCF}
      \begin{cases}
        \min\left\{\dot u-\sqrt{1+|Du|^2}\cdot
          \divergenz\left(\frac{Du}{\sqrt{1+|Du|^2}}\right),
          u-\psi\right\}=0
        &\text{in }\Omega,\\
        u(\cdot,0)=u_0&\text{in }\Omega_0,
      \end{cases}
    \end{equation}
   in the viscosity sense. 
  \item\label{def sol max} \textbf{Maximality condition:} A function
    $u\colon\Omega\to\R$ fulfils the maximality condition if $u\le c$
    for some $c\in\R$ and if
    $u|_{\Omega\cap\left(\R^{n+1}\times[0,T]\right)}$ is proper for
    every $T>0$. An initial value $u_0\colon\Omega_0\to\R$,
    $\Omega_0\subset\R^{n+1}$, is said to fulfil the maximality
    condition if $w\colon\Omega_0\times[0,\infty)\to\R$ defined by
    $w(x,t):=u_0(x)$ fulfils the maximality condition.
  \item \textbf{Singularity resolving solution:} $(\Omega,u)$, 
  or equivalently $(M_t)_{t\ge0}$ given by $M_t=\graph
    u(\cdot,t)|_{\Omega_t}\subset\R^{n+2}$, 
    is called a singularity resolving solution to mean curvature flow
    respecting the obstacle $\O$ if the conditions
    \eqref{def sol domain}-\eqref{def sol max} are fulfilled.
  \end{enumerate}
\end{definition}
 
The formulation involving the minimum in \eqref{MCF} is a standard
description for viscosity solutions to obstacle problems
cf. \cite[Example 1.7]{UsersGuideViscosity}.  We remark that the above
definition immediately implies that $u\ge\psi$ and that $\dot
u+\sqrt{1+|Du|^2}\cdot H=0$ in the viscosity sense wherever
$u>\psi$. Furthermore

\vspace{0.2cm}
\begin{remark}
  For a $C^{2;1}$-function $u$, the equation \eqref{MCF} is fulfilled
  if and only if $u$ is a solution to
  \[
  \begin{cases}
    \dot u=\sqrt{1+|Du|^2}\cdot\divergenz
    \left(\fracd{Du}{\sqrt{1+|Du|^2}}\right) \equiv -v\cdot H
    &\text{in
    }\Omega\setminus((\Omega_0\times\{0\})\cup\Gamma),\\
    \dot u=v\cdot(-H)_+ &\text{in
    }(\Omega\setminus(\Omega_0\times\{0\}))\cap\Gamma,\\
    u(\cdot,0)=u_0\geq \psi&\text{in }\Omega_0,
  \end{cases}
  \]
  where \[\Gamma:=\{(x,t)\in\Omega\colon u(x,t)=\psi(x)\}\] is the
  contact set between the evolving hypersurface and the obstacle.
\end{remark}

In $C^{2;1}$ and more generally for parabolic H\"older spaces, the
first exponent refers to regularity in spatial and the second in time
directions. 

\section{Main results and overview of the proof}

We prove
\begin{theorem}\label{thm1}
  Let $\Ob$, $\Om_0$ and $u_0$ be an obstacle and an initial datum as 
  in Definition \ref{def: ini data}.  Then there exists a singularity
  resolving solution $(\Om, u)$ with
  \[u\in C^{1,1;0,1}_{\loc}(\Omega\setminus(\Omega_0\times\{0\})) \cap
  C^0_{\loc}(\Omega)\] of mean curvature flow respecting the obstacle
  $\Ob$ for all times.
 
  Furthermore, the evolving surface $M_t:=\graph u(\cdot,t)$ is
  controlled in halfspaces of the form $\left\{x^{n+2}>\ell \right\}$ for
  arbitrary $\ell\in\R$ in the sense that
  $v=\langle\nu,e_{n+2}\rangle^{-1}$ and the second fundamental form
  $A$ of $M^\ell_t :=M_t\cap \left\{x^{n+2}>\ell\right\}$ satisfy
  \begin{equation}
    \label{est:thm-v-A-bound-lip-ini-data}
    \Vert v\Vert_{L^\infty\left(M_t^\ell\right)}+\sqrt t\cdot\Vert
    A\Vert_{L^\infty\left(M_t^\ell\right)}\le C(u_0,\O,\ell). 
  \end{equation}
  Furthermore, if the initial surface $M_0$ is $C^{1,1}_{\loc}$, then
  $M^\ell_t$ has uniformly controlled second fundamental form $\Vert
  A\Vert_{L^\infty\left(M^\ell_t\right)}\le C(u_0,\O,\ell)$ up to time
  $t=0$.\par In addition, for positive times, $u$ is smooth away from
  the contact set. 
\end{theorem}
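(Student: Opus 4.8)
The plan is to follow the strategy of \cite{OSMarielMCFwithoutSing} and obtain the solution as a limit of a penalised, non-degenerate approximation. First I would replace the one-sided obstacle constraint by an elliptic-regularisation-type penalty term: for $\eps>0$ choose a smooth convex function $\beps\colon\R\to\R$ with $\beps(s)=0$ for $s\ge\eps$, $\beps(s)\to+\infty$ as $s\to-\infty$, and $\beps''\ge0$, and consider the parabolic equation
\[
  \dot u_\eps=\sqrt{1+|Du_\eps|^2}\cdot\divergenz\left(\frac{Du_\eps}{\sqrt{1+|Du_\eps|^2}}\right)-\beps(u_\eps-\psi)
\]
on a large but finite domain, or directly on $\Omega_0$, with $u_\eps(\cdot,0)=u_0$ (after mollifying $u_0$ slightly if one wants classical solutions up to $t=0$). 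The penalty pushes the graph above $\psi$ in the limit and, on the set where $u_\eps>\psi+\eps$, the equation is exactly graphical mean curvature flow. Standard parabolic theory gives smooth solutions $u_\eps$ on their maximal domain of existence, and the properness/boundedness of $\psi$ and $u_0$ from Definition \ref{def: ini data} should give barriers controlling $u_\eps$ from above (by a constant, using $u_0\le c$ and $\beps\ge0$) and preventing the domain from shrinking too fast, so that $\mathcal P\subset\Omega_t$ for all $t$.

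The heart of the matter is the a priori estimates. I expect the gradient estimate $\|v_\eps\|_{L^\infty(M_{\eps,t}^\ell)}\le C(u_0,\O,\ell)$ to come from a maximum principle applied to $v_\eps=\sqrt{1+|Du_\eps|^2}$ (or to $v_\eps$ times a suitable cutoff in $x^{n+2}$), using convexity of $\beps$ and the fact that on the obstacle $\partial\O\in C^{1,1}_\loc$ the normal is controlled; the halfspace localisation $\{x^{n+2}>\ell\}$ is exactly what makes $u_0$ and $\psi$ effectively bounded there. For the curvature bound one evolves $|A|^2$ (again cut off in $x^{n+2}$ and weighted by a power of $v_\eps$), and the key point will be to show that the obstacle term contributes with a favourable sign or at worst a controllable error depending only on $\|D^2\psi\|_{L^\infty_\loc}$ — this is the place where $\psi\in C^{1,1}_\loc$ is used essentially, and where the $C^{1,1}$-regularity of the final solution (and its optimality, cf. \cite{CGGlobalC11}) originates. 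The factor $\sqrt t$ in \eqref{est:thm-v-A-bound-lip-ini-data} is the usual gain from an Ecker–Huisken-type interior-in-time estimate $t|A|^2\le C$ that does not need initial curvature bounds; if $M_0\in C^{1,1}_\loc$ one instead propagates the bound from $t=0$ by the maximum principle.

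With these estimates in hand, the passage to the limit is comparatively routine: local uniform $C^{1,1;0,1}$ bounds give, via Arzelà–Ascoli and a diagonal argument, a subsequence $u_\eps\to u$ in $C^1_\loc$ away from $t=0$ and in $C^0_\loc$ up to $t=0$, with $u\in C^{1,1;0,1}_\loc(\Omega\setminus(\Omega_0\times\{0\}))\cap C^0_\loc(\Omega)$; the limit satisfies $u\ge\psi$ because $\beps(u_\eps-\psi)$ must stay bounded in $L^1_\loc$, and it is a viscosity solution of \eqref{MCF} by the standard stability of viscosity solutions under this kind of approximation (here one must check both the subsolution and supersolution inequalities, treating the two alternatives in the $\min$ separately, exactly as in \cite[Example 1.7]{UsersGuideViscosity}). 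The maximality condition follows from the uniform upper bound $u\le c$ and the properness of $u_\eps$, which is preserved in the limit on compact time intervals. Finally, away from the contact set $\Gamma$ the equation is uniformly parabolic and the $C^{1,1}$ bound bootstraps via Schauder theory to full interior smoothness, giving the last assertion. The main obstacle, as indicated, is the curvature estimate in the presence of the obstacle term: controlling the sign and size of the contributions of $\beps$ and of $D^2\psi$ when differentiating the evolution equation twice.
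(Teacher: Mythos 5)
Your overall strategy (penalise, derive $\eps$-independent gradient and curvature bounds in halfspaces, pass to the limit as a viscosity solution) is the same as the paper's, but two of your choices hide genuine gaps. First, you penalise the \emph{graphical} deficit $u_\eps-\psi$, whereas the paper penalises the Euclidean signed distance to $\partial\O$ in $\R^{n+2}$, i.\,e.{} $\alpha_\eps=\beta_\eps\circ\dist_{\partial\O}$, and explicitly notes that this choice is crucial for complete graphs over possibly bounded domains. Since $\psi$ is proper with $\psi\to-\infty$ and $|D\psi|\to\infty$ towards $\partial\P$, the obstacle becomes vertical there; the normal forcing produced by a graphical penalty degenerates in exactly this region, and, more importantly, the identity $\naM\alpha_\eps=\beta_\eps'\,P_{TM}(\nu_{\O})$, which yields $\langle\naM\alpha_\eps,e_{n+2}\rangle\le0$ wherever $v\ge v_{\O}$ (the key to the localised gradient estimate $(U-\ell)^2v\le C$), is a property of the distance function and is not available for $\beta_\eps(u-\psi)$. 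The comparison-surface argument bounding the penetration depth by $O(\eps)$ --- which is what makes $\alpha_\eps$ itself uniformly bounded in halfspaces and enters every subsequent estimate --- also relies on level sets of $\dist_{\partial\O}$ and is absent from your plan.

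Second, and more seriously, the curvature estimate cannot be obtained by hoping that the obstacle term has ``a favourable sign or a controllable error depending only on $\|D^2\psi\|$''. Differentiating the penalty twice produces a term $-2\beta_\eps''\langle\nu_{\O},\nabla_iF\rangle\langle\nu_{\O},\nabla_jF\rangle A^{ij}$ of size $\eps^{-2}$ with no sign, and it is not controlled by $D^2\psi$ at all. The paper's resolution is to replace $|A|^2$ by $e^{\gamma\alpha_\eps}|A|^2$ (and then by $h(v^2)\,e^{\gamma\alpha_\eps}|A|^2$, localised with $(U-\ell)^4$ and $t$): the evolution equation of $\alpha_\eps$ contributes $-\gamma\beta_\eps''|P_{TM}\nu_{\O}|^2|A|^2$, which dominates the signless $\eps^{-2}$ term for $\gamma$ small, while the remaining first-order terms of size $|\beta_\eps'|$ are absorbed using that both surfaces are graphical, so that $\langle\nu,\nu_{\O}\rangle$ is bounded away from $-1$. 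This compensating mechanism is the essential new idea of the proof and is missing from your proposal; without it the maximum-principle argument for the second fundamental form breaks down on the contact region. The remaining steps you describe (barriers, the Ecker--Huisken-type $\sqrt t$ gain, Arzel\`a--Ascoli, viscosity stability treating the two branches of the minimum separately, and smoothness off the contact set) do match the paper.
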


\begin{remark}
  \neueZeile
  \begin{enumerate}[(i)]
  \item The regularity statement of Theorem \ref{thm1} can be seen as
    the analogue of C. Gerhardt's $C^{1,1}$-regularity result
    \cite{CGGlobalC11} for solutions of the stationary obstacle
    problem. The simple example of a rope spanned over a circle
    illustrates in both cases that the spatial $C^{1,1}$-regularity is
    optimal.
  \item As $C^{1,1}$-functions are twice differentiable almost
    everywhere, the second fundamental form is defined almost
    everywhere and the above $L^\infty$-bounds on the second
    fundamental form and the gradient are equivalent to local
    $C^{1,1}$-bounds. 
  \end{enumerate}
\end{remark}

As it is of interest to consider not only complete but also entire
graphs, we prove additionally
\begin{theorem}
  \label{thm2}
  Let $u_0\colon\R^{n+1}\to\R$ be bounded and Lipschitz
  continuous. Assume that $u_0$ is constant outside a compact subset
  of $\R^{n+1}$.  Let $\psi\colon\R^{n+1}\to\R$ be a function
  describing an obstacle as in Definition \ref{def: ini data}. Assume
  furthermore that $u_0\ge\psi$.  Then there exists a uniformly
  continuous viscosity solution $u\colon\R^{n+1}\times[0,\infty)\to\R$
  of mean curvature flow with obstacle
  \[\min\left\{\dot u-\sqrt{1+|Du|^2}\cdot
    \divergenz\left(\frac{Du}{\sqrt{1+|Du|^2}}\right),
    u-\psi\right\}=0\] with $u(\cdot,0)=u_0$.  Furthermore
  \[\Vert u\Vert_{L^\infty(M_t)} +\Vert v\Vert_{L^\infty(M_t)} +\sqrt
  t\cdot\Vert A\Vert_{L^\infty(M_t)}\le C(u_0,\psi).\] 
\end{theorem}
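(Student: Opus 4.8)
The plan is to deduce Theorem \ref{thm2} from Theorem \ref{thm1} together with a comparison argument that controls the complete-graph solution near the cylinder $\partial(\text{supp}(u_0-c_\infty))\times\R$, where $c_\infty$ is the constant value of $u_0$ outside a compact set. First I would set up the lifted data: since $u_0\colon\R^{n+1}\to\R$ is already entire, bounded and Lipschitz, the natural choice is $\Omega_0=\R^{n+1}$, so that there is no lateral boundary to build a graph over and Definition \ref{def: ini data} applies directly with $\Pro=\R^{n+1}$ (the obstacle $\psi$ is already globally defined and proper by hypothesis, but properness of $\psi$ over all of $\R^{n+1}$ forces $\psi\to-\infty$ at infinity, which is fine since $u_0\ge\psi$). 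Theorem \ref{thm1} then hands us a singularity resolving solution $(\Omega,u)$ with $u\in C^{1,1;0,1}_{\loc}\cap C^0_{\loc}$ and the estimate \eqref{est:thm-v-A-bound-lip-ini-data} on every halfspace $\{x^{n+2}>\ell\}$. The point is to upgrade these halfspace bounds to genuine global bounds, i.e.\ to show $\Omega_t=\R^{n+1}$ for all $t$ and that $v$, $A$, and $u$ itself are bounded uniformly without cutting off a halfspace.

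The key step is the uniform $L^\infty$-bound on $u$, and I would get it by barriers. Since $u_0$ is bounded, say $m\le u_0\le M$, the constant function $M$ is a (stationary) supersolution of mean curvature flow lying above the obstacle wherever $M\ge\psi$; but $\psi$ is only bounded above, so $\max(M,\sup\psi)=:M'$ is a legitimate constant supersolution to the obstacle problem \eqref{MCF}, and the comparison principle for viscosity solutions of \eqref{MCF} gives $u\le M'$. For the lower bound, $u\ge\psi$ always holds, and additionally one compares with the evolution of the constant $m$ pushed up by the obstacle; more robustly, $u_0$ being constant $=c_\infty$ outside a compact set $K$ lets me use, as a lower barrier away from $K$, a large shrinking sphere or a suitable translate of a grim-reaper-type subsolution, yielding a uniform lower bound $u\ge c(u_0,\psi)$ and simultaneously confining the "interesting" part of the graph, so that $M_t$ stays a complete graph over all of $\R^{n+1}$ and $\Omega_t=\R^{n+1}$. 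Once $u$ is globally bounded, the halfspace in \eqref{est:thm-v-A-bound-lip-ini-data} can be taken at level $\ell$ below this uniform lower bound, and then $M_t^\ell=M_t$, so the bounds on $v$ and $\sqrt t\,A$ become global, proving the displayed estimate. Uniform continuity of $u$ in space-time then follows from the local $C^{1,1;0,1}$-regularity together with these global bounds (the gradient bound $v\le C$ is a global Lipschitz bound in space, and a standard interpolation / barrier argument bounds the time modulus).

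The main obstacle I anticipate is verifying that the comparison principle applies in the unbounded setting: viscosity comparison on $\R^{n+1}\times[0,\infty)$ requires either a growth condition at infinity or an independent a priori control on the behaviour of $u$ near spatial infinity, and here one must exploit that $u_0$ is eventually constant to trap $u$ between two barriers that agree at infinity. Concretely, for $|\hat x|$ large one wants $u(\hat x,t)$ to stay close to $c_\infty$ uniformly in $t$; this is where the "constant outside a compact set" hypothesis is essential and where some care is needed, because the obstacle $\psi$ might still be active far out. I would handle this by noting that $\psi\le u_0=c_\infty$ there, and that the obstacle, being bounded above, cannot push the solution above its initial bound, so the two-sided barriers $c_\infty\pm(\text{spherical correction})$ localise the deviation and give the needed uniform control; feeding this back into the global comparison argument then closes the proof. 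The remaining steps — existence via Theorem \ref{thm1}, the translation of halfspace estimates into global estimates, and the uniform continuity — are routine once the barrier/comparison input is in place.
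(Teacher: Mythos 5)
There is a genuine gap at the very first step: Theorem \ref{thm1} does not apply to the data of Theorem \ref{thm2}, so you cannot obtain the solution by invoking it as a black box. Definition \ref{def: ini data}(ii) requires $u_0$ to be \emph{proper} (and the maximality condition in Definition \ref{def:gr-mcf-obst} likewise requires $u|_{\Omega\cap(\R^{n+1}\times[0,T])}$ to be proper), whereas the $u_0$ of Theorem \ref{thm2} is bounded and equal to a constant $c_\infty$ outside a compact set, hence not proper. You noticed the properness issue for $\psi$ but not for $u_0$, and for $u_0$ it is fatal to your plan: the entire construction behind Theorem \ref{thm1} truncates the initial value at levels $L\to-\infty$ and solves Dirichlet problems with boundary value $u=L$ on $\partial B_R(0)$ under the assumption \eqref{ass:eps-R2} that $u_0\le L-1$ outside $B_{R/2}(0)$, which is impossible for bounded $u_0$ once $L<c_\infty+1$. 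This is precisely why the paper states and proves Theorem \ref{thm2} separately rather than as a corollary of Theorem \ref{thm1}.

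What the paper actually does is re-run the approximation scheme of Proposition \ref{prop:approx-sol-exist} with the boundary condition replaced by $u=c_\infty$ on $\partial B_R(0)\times[0,\infty)$, use large spheres as barriers to separate the evolving graph from the obstacle near infinity (so that there the equation is pure graphical mean curvature flow and the interior estimates of Ecker--Huisken apply), take the hyperplane at height $\inf u_0-1$ as a global lower barrier so that Lemma \ref{lemma:comp-surf-zwei-ff} can be used with constant $f_0$, and then apply the maximum principle \emph{globally} (without the $(U-\ell)$-localisation) to the evolution equations for $v$ and $G$, since near infinity these quantities are already controlled. Several ingredients of your outline do survive in this corrected form --- constants are subsolutions because $\alpha_\epsilon\ge0$ (giving the lower barrier), $\max\{\sup u_0,\sup\psi\}$ is an upper barrier, and uniform continuity follows from the resulting global gradient and velocity bounds --- but the existence step must go through the modified approximation, not through Theorem \ref{thm1}. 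Your concern about comparison principles on unbounded domains is also moot in the paper's scheme, since all comparisons are performed for classical solutions on bounded balls before passing to the limit.
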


Theorem \ref{thm2} could be used to construct viscosity solutions for
mean curvature flow with obstacles based on the level set
approach. Such solutions were recently constructed in
\cite{MercierObstacleViscosity}.

Of course, in the absence of an obstacle, this result is a
special case of \cite{EckerHuiskenInvent}.
\medskip

The approach we use to construct a solution of mean curvature flow
with obstacles in the graphical setting is by penalisation. We obtain
the desired viscosity solution as a limit of solutions to problems
which allow a penetration of the obstacle, but penalise it by stronger
and stronger normal vector fields trying to push the hypersurface back
out of the obstacle.

More precisely, we fix a function $\beta\in C^{\infty}(\R,
[0,\infty))$, supported in $(-\infty,0]$ with $\beta''$
non-increasing, and thus in particular satisfying $\beta''>0$ whenever
$\beta>0$, and consequently also $\beta'<0$.

We furthermore define $\dist_\pO$ to be the signed distance function
to the boundary of $\O$ chosen so that $\dist_\pO$ is negative in
$\O$. 

Given $\eps>0$ we then consider the flow 
\begin{equation}
  \label{eps fluss}
  \tdt F=-(H-\alpha_\epsilon)\cdot\nu=\Delta F+\aleps \nu,
\end{equation}
where 
$$\aleps(p):=\beps(\dist_\pO(p)), \quad
\beta_\eps(s)=\beta\left(\frac{s}\eps\right)$$ and 
where $\Delta$ is the Laplacian on the evolving submanifold so that
$-H\nu=\Delta F$.

We stress that our penalisation depends on the Euclidean distance to
$\partial\O\subset\R^{n+2}$ and not on the graphical one, i.\,e.{} not
on $u(x,t)-\psi(x)$. This feature of the con\-struc\-tion is crucial
in order to be able to deal with complete graphs over possibly bounded
domains.

While solutions to the penalised flow can sink into the obstacle, we
shall show in Section \ref{sect:penetration} that the depth of this
penetration is of order $O(\epsilon)$.  In Section \ref{section:C1} we
shall then prove that the gradient function of these approximate
solutions is bounded uniformly in time and locally in space. Similar
$C^{1,1}$-estimates will be deduced in the following Section
\ref{section:A}. We stress that these estimates are independent of the
parameter $\eps$ of the penalisation which thus immediately gives
$C^{1,1}$ regularity also for our viscosity solution of mean curvature
flow with obstacles which we obtain in the limit $\eps\searrow 0$, see
Section \ref{sect:proofs}.

While we will state and prove these results only for
smooth obstacles, all the estimates derived in Sections
\ref{section:C1} and \ref{section:A} depend only on the local
$C^2$-norm of $\psi$, so we are able to reduce the proof of Theorems
\ref{thm1} and \ref{thm2} to the case of smooth obstacles and an
approximation argument carried out later on in Section
\ref{sect:proofs}. In particular we will assume from now on that
$\psi$ is smooth unless stated otherwise.

\section{Notations and geometry of submanifolds}
\label{nota sec}
We use $F=F(x,\,t)=\left(F^\alpha\right)_{1\le\alpha\le n+2}$ to
denote the time-dependent embedding vector of a manifold $M^{n+1}$
into $\R^{n+2}$ and $\dt F=\dot F $ for its total time derivative.
We set $M_t:=F(M,\,t)\subset\R^{n+2}$ and will often identify an embedded
manifold with its image. We will assume that $F$ is smooth.  We assume
furthermore that $M^{n+1}$ is smooth and orientable.  The embedding
$F(\cdot,\,t)$ induces a metric $g=(g_{ij})_{1\le i,\,j\le n+1}$ on
$M_t$. We denote by $\na$ the Levi-Civit\'a connection on $(M_t,g(t))_t$
and the induced bundles while we write $\bar\na$ for the gradient on
the ambient space $\R^{n+2}$.

We choose $\nu=\left(\nu^\alpha\right)_{1\le\alpha\le n+2}$ to be the
upward pointing unit normal vector to $M_t$ at $x\in M_t$.

The second fundamental form $A$ is then characterized through the
Gau\ss{} equation
\begin{equation}\label{Gauss formula}
  \nabla_i\nabla_j F=-A_{ij}\nu
\end{equation}
or, equivalently, the Weingarten equation
\[\nabla_i\nu=A_{il}g^{lk}\nabla_kF=A^k_i\nabla_kF.\]
Here and in the following, we raise and lower indices using the metric
and its inverse $\big(g^{ij}\big)$ and utilize the Einstein summation
convention to sum over repeated upper and lower indices.

Throughout the paper, Latin indices range from $1$ to $n+1$ and refer
to geometric quantities on the hypersurface, while Greek indices refer
to the components in fixed Euclidean coordinates in the ambient space
$\R^{n+2}$.

We define the mean curvature $H$ by $H=g^{ij}A_{ij}$ and compute the
norm of the second fundamental form through
$|A|^2=A_{ij}g^{jk}A_{kl}g^{il}$.

Finally, given a function $f$ defined on the ambient space $\R^{n+2}$
we write $\naM f$ for the derivative of $f\vert_{M_t}$ on $M_t$ which
can equivalently be computed as the projection
$$\naM f=P_{TM}\left(\bar \na f\right)=\bar\na f-\left\la\bar\na
  f,\nu\right\ra \nu,$$ of the ambient gradient to the tangent space
of the evolving hypersurface $M_t$. Here we use in the last equality
that this orthogonal projection $P_{TM}:\R^{m+2}\to T_pM_t$, $p\in M$,
can be expressed in terms of the normal as $P_{TM}(X)=X-\la X,\nu\ra
\nu$, where $\la \cdot,\cdot \ra$ denotes the Euclidean inner product
on $\R^{n+2}$.  Furthermore we will consider the gradient 
 $\naM f(p,t)$ of functions $f$, be they defined on all of
$\R^{n+2}$ or only on $M_t$, as a
vector in either $T_pM_t$ or in $\R^{n+2}$ as convenient and without
changing the notation. Similarly, we will evaluate geometric
quantities either at $(x,t)\in M\times[0,\infty)$ or at $p=F(x,t)\in
M_t\subset\R^{n+2}$.

As the topology of our solutions may change, we only require that
solutions to \eqref{eps fluss} are parametrised over a base manifold
$M$ locally in space and time.

We shall also use that the Gau{\ss} equation allows us to express
the Riemannian curvature tensor of the surface in terms of the second
fundamental form
\[R_{ijkl}=A_{ik}A_{jl}-A_{il}A_{jk}.\]

Throughout the paper, expressions like $\nabla_i\nabla_jA_{kl}$ are to
be understood as first computing the covariant derivatives of the
tensor $A$ and then evaluating it in the indicated directions of the
standard basis vector fields.

\section{Evolution equations} 
In this section we collect the evolution equations of the various
geometric quantities such as gradient function, second fundamental
form, etc. As the corresponding formulas for mean curvature flow, and
more generally for graphical flows moving in normal direction, are
well known, see \cite{EckerBook,CGCPBook,HuiskenPolden}, we will
mainly analyse the influence of the penalisation $\aleps$.

We remark that the distance function $\dist_\pO$ as well as its level
sets are $C^2_\loc$ in a neighbourhood of $\partial \Ob$ and that
throughout this section we shall only consider points which, if they
are in $\O$, are contained in such a neighbourhood. We will later
justify this assumption as a consequence of Lemma
\ref{lemma:penetration}.

To begin with, we define the height function of the evolving
hypersurface by \[U:=\langle F,e_{n+2}\rangle.\] For graphical
hypersurfaces, the penalised flow \eqref{eps fluss} can be rewritten
in terms of $U$ as \[\tdt U-\Delta
U=\alpha_\epsilon\langle\nu,{e_{n+2}}\rangle=\frac{\aleps}{v},\] $v$
the gradient function introduced above.

For a family of hypersurfaces moving with normal velocity $f$,
\beq \label{eq:general-flow} \frac{d}{dt}F=-f\cdot \nu,\eeq $f$ any
function defined on the evolving hypersurfaces, it is well known that
the metric evolves by $\dt g_{ij}=-2fA_{ij}$ which becomes
\begin{equation}
  \label{eq:evol-g}
  \tdt g_{ij}=-2(H-\alpha_\epsilon)A_{ij}  
\end{equation}
in our case.  The normal evolves by
$$\tdt \nu=\na f,$$
so using the identity
$$\Delta \nu=-\abs{A}^2\nu+\na H,$$
valid for arbitrary hypersurfaces in Euclidean space, we obtain in
this more general context of \eqref{eq:general-flow} that
\beq \label{eq:ddtnu-general}
\left(\tdt-\Delta\right)\nu=\abs{A}^2\nu+\na(f-H), \eeq which for our
flow translates to

\begin{lemma}
  For hypersurfaces evolving according to \eqref{eps fluss}, 
  $\nu$ fulfills
  \begin{align*} \tdt\nu-\Delta\nu=&\,|A|^2\nu-\naM\alpha_\epsilon,
    \intertext{or, equivalently, written out in local coordinates }
    \tdt\nu^\beta-\Delta \nu^\beta=&\,|A|^2\nu^\beta
    -\ol\nabla_\gamma\alpha_{\epsilon}\nabla_iF^\gamma
    g^{ij}\nabla_jF^\beta.
  \end{align*}
\end{lemma}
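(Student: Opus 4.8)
The plan is to start from the general evolution equation \eqref{eq:ddtnu-general}, namely $\left(\tdt-\Delta\right)\nu=|A|^2\nu+\na(f-H)$, which is stated and justified in the excerpt for any flow of the form \eqref{eq:general-flow}. For our penalised flow \eqref{eps fluss} we have $f=H-\alpha_\epsilon$, so $f-H=-\alpha_\epsilon$, and the equation becomes $\tdt\nu-\Delta\nu=|A|^2\nu-\naM\alpha_\epsilon$. This gives the first displayed identity immediately. The only subtlety worth noting is that $\alpha_\epsilon=\beta_\epsilon(\dist_\pO(\cdot))$ is an ambient function, so $\na\alpha_\epsilon$ in \eqref{eq:ddtnu-general} must be read as the intrinsic gradient $\naM\alpha_\epsilon$ of its restriction to $M_t$; this is consistent with the notational conventions fixed in Section \ref{nota sec}.

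Next I would pass to local coordinates to obtain the second identity. Using $\naM\alpha_\epsilon=\ol\na\alpha_\epsilon-\la\ol\na\alpha_\epsilon,\nu\ra\nu$ together with the tangential frame $\{\na_iF\}$, one writes the tangential projection as $\naM\alpha_\epsilon=g^{ij}(\ol\na_\gamma\alpha_\epsilon\,\na_iF^\gamma)\na_jF$. Taking the $\beta$-th Euclidean component then yields exactly $\tdt\nu^\beta-\Delta\nu^\beta=|A|^2\nu^\beta-\ol\na_\gamma\alpha_\epsilon\,\na_iF^\gamma g^{ij}\na_jF^\beta$. One should also check that $\Delta\nu$ acting componentwise agrees with the connection Laplacian on the normal bundle contracted in the stated way, but since $\nu^\beta$ are scalar functions this is just the statement that $\Delta$ is applied coordinatewise, which is the convention already in force (cf. the identity $\Delta\nu=-|A|^2\nu+\na H$ quoted above).

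There is essentially no main obstacle here: the lemma is a direct specialisation of the already-established general formula \eqref{eq:ddtnu-general}. The only point requiring a little care is the sign and the identification $f-H=-\alpha_\epsilon$, which must be tracked through the definition $\tdt F=-(H-\alpha_\epsilon)\nu$, and the fact that differentiating the ambient function $\alpha_\epsilon$ along $M_t$ produces the tangential gradient rather than the full ambient gradient. I would present the proof in two short lines: substitute $f=H-\alpha_\epsilon$ into \eqref{eq:ddtnu-general}, then expand $\naM\alpha_\epsilon$ in the coordinate frame to get the component form.

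\begin{proof}
  Recall from \eqref{eq:ddtnu-general} that for any hypersurface
  flow $\tdt F=-f\nu$ the normal satisfies
  $\left(\tdt-\Delta\right)\nu=|A|^2\nu+\naM(f-H)$, where $\naM$
  denotes the gradient along $M_t$. For the penalised flow
  \eqref{eps fluss} we have $f=H-\alpha_\epsilon$, hence
  $f-H=-\alpha_\epsilon$ and therefore
  \[
    \tdt\nu-\Delta\nu=|A|^2\nu-\naM\alpha_\epsilon.
  \]
  To write this in local coordinates, note that $\alpha_\epsilon$ is
  a function on the ambient space $\R^{n+2}$, so by the discussion in
  Section \ref{nota sec} its gradient along $M_t$ is the tangential
  projection of $\ol\na\alpha_\epsilon$, which in the frame
  $\{\na_iF\}$ reads
  \[
    \naM\alpha_\epsilon
    =g^{ij}\left(\ol\na_\gamma\alpha_\epsilon\,\na_iF^\gamma\right)
      \na_jF.
  \]
  Taking the $\beta$-th Euclidean component and using that $\Delta$
  acts componentwise on the scalar functions $\nu^\beta$ yields
  \[
    \tdt\nu^\beta-\Delta\nu^\beta
    =|A|^2\nu^\beta
     -\ol\na_\gamma\alpha_\epsilon\,\na_iF^\gamma g^{ij}\na_jF^\beta,
  \]
  as claimed.
\end{proof}
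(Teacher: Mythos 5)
Your proof is correct and matches the paper's own treatment: the lemma is stated there as an immediate specialisation of the general formula \eqref{eq:ddtnu-general} with $f=H-\alpha_\epsilon$, exactly as you do, and your expansion of $\naM\alpha_\epsilon$ in the tangential frame reproduces the paper's coordinate expression following \eqref{eq:Daleps}. No gaps.
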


With $\aleps$ given by $\aleps=\beps\circ\dist_\pO$, its derivative in
a point $p\in M_t\cap \O$ is determined in terms of $\nu_\Ob=\bar \na
\dist_\Ob$ (where defined) which describes the outwards unit normal to
the level set
$$\pO_\delta:=\{y\in\R^{n+2 }: \dist_\pO(y)=-\de\}$$
which contains $p$. Namely, 
\beq 
\label{eq:Daleps}
\naM\aleps=\bepsp\cdot \naM\dist_\pO =\bepsp \cdot P_{TM}(\nu_\Ob)
=\bepsp\cdot(\nu_\Ob-\langle\nu_\Ob,\nu\rangle\nu),\eeq or
equivalently, working in local coordinates, $\nabla_j\alpha_\epsilon
=\beta_\epsilon'\cdot\langle\nu_\O,\nabla_jF\rangle$.
  
Outside of $\O$, the derivative
of $\alpha_\epsilon$ vanishes.

For graphical solutions of \eqref{eps fluss}, or more generally of
\eqref{eq:general-flow}, we then consider the 'gradient function'
$v$ defined by $v=\langle\nu,{e_{n+2}}\rangle^{-1}$ which, by
\eqref{eq:ddtnu-general}, satisfies 
\begin{align*}
  \left(\tdt-\Delta\right)v=&\,-\langle\nu,{e_{n+2}}\rangle^{-2}\cdot
  \left\la\left(\tdt-\Delta\right)\nu,e_{n+2}\right\ra
  -2\langle\nu,{e_{n+2}}\rangle^{-3}
  \abs{\na \langle\nu,{e_{n+2}}\rangle}^2\\
  =&\, -v^2\cdot \left[\langle\nu,{e_{n+2}}\rangle
    \abs{A}^2+\left\la\na(f-H),e_{n+2}\right\ra\right]
  -2v^{3}\abs{\na \left(v^{-1}\right)}^2\\
  =&\, -\abs{A}^2\cdot v-2\frac{\abs{\na
      v}^2}{v}-\la\na(f-H),e_{n+2}\ra\cdot v^2.
\end{align*}
We shall later use that we can express $\na v$ in terms of the second
fundamental form as \beq \label{eq:deriv-v}
\na_iv=-v^2\la\na_i\nu,e_{n+2}\ra=-v^2A_i^k\la\na_kF,e_{n+2}\ra
=-v^2A_i^k\na_k U.  \eeq but for now only need the conclusion that
\begin{lemma}\label{v evol}
  For graphical hypersurfaces evolving according to \eqref{eps fluss},
  the gradient function $v=\langle\nu,{e_{n+2}}\rangle^{-1}$ fulfills
  \begin{equation}
    \label{eq v evol}
    \tdt v-\Delta v=-|A|^2v -\tfrac 2v|\nabla v|^2 +v^2\left\langle\naM
    \alpha_\epsilon,{e_{n+2}}\right\rangle.
  \end{equation}
\end{lemma}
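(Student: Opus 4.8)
The plan is to derive \eqref{eq v evol} directly from the general
computation for $(\tdt-\Delta)v$ that immediately precedes the lemma,
simply by inserting the specific choice $f=H-\alpha_\epsilon$ coming from
the penalised flow \eqref{eps fluss}. All the hard work — the evolution
of $\nu$ under a normal flow, the identity $\Delta\nu=-|A|^2\nu+\nabla H$,
and the resulting formula
\[
\left(\tdt-\Delta\right)v=-|A|^2 v-\tfrac2v|\nabla v|^2
-\langle\nabla(f-H),e_{n+2}\rangle\cdot v^2
\]
valid for any normal speed $f$ — has already been done in the text. So
the only thing left is the substitution $f-H=-\alpha_\epsilon$, which
turns the last term into $+v^2\langle\nabla\alpha_\epsilon,e_{n+2}\rangle$.

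First I would recall that for the flow \eqref{eps fluss} we have $f=H-\alpha_\epsilon$,
so that $f-H=-\alpha_\epsilon$ and hence
$\nabla(f-H)=-\naM\alpha_\epsilon$ as a tangential gradient on $M_t$.
Then I would plug this into the displayed general formula for
$(\tdt-\Delta)v$, obtaining
\[
\tdt v-\Delta v=-|A|^2 v-\tfrac2v|\nabla v|^2
+v^2\langle\naM\alpha_\epsilon,e_{n+2}\rangle,
\]
which is exactly \eqref{eq v evol}. One small point worth spelling out is
that in the pairing $\langle\nabla(f-H),e_{n+2}\rangle$ one may use either
the ambient gradient $\bar\nabla\alpha_\epsilon$ or its tangential
projection $\naM\alpha_\epsilon$: since the general derivation arrived at
this term via $\langle(\tdt-\Delta)\nu,e_{n+2}\rangle$ and the normal
evolves by $\tdt\nu=\nabla f$ with $\nabla$ the tangential gradient, it is
the tangential gradient $\naM\alpha_\epsilon$ that appears, consistent with
the preceding lemma on the evolution of $\nu$. (Alternatively, one notes
that $\bar\nabla\alpha_\epsilon-\naM\alpha_\epsilon$ is normal, and
contributes $\langle\nu,e_{n+2}\rangle\cdot\langle\bar\nabla\alpha_\epsilon,\nu\rangle$,
which together with the $-v^2\langle\nu,e_{n+2}\rangle|A|^2$ bookkeeping can be
tracked through; but the cleanest route is just to carry the tangential
gradient throughout.)

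There is essentially no obstacle here: the statement is a corollary of
the general normal-flow computation already carried out, specialised to
$f=H-\alpha_\epsilon$. The only thing to be careful about is sign and
projection conventions — that $\alpha_\epsilon$ enters \eqref{eps fluss}
with a $+$ in $\tdt F=-(H-\alpha_\epsilon)\nu$, so that $f-H=-\alpha_\epsilon$
and the final term is $+v^2\langle\naM\alpha_\epsilon,e_{n+2}\rangle$ — and
that the gradient appearing is the intrinsic one on $M_t$, in agreement with
\eqref{eq:Daleps}. No separate estimate on $\alpha_\epsilon$ or on the
distance function is needed for this lemma; those enter only in the later
sections where \eqref{eq v evol} is used.
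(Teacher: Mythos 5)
Your proposal is correct and is exactly the paper's argument: the general formula $(\tdt-\Delta)v=-|A|^2v-\tfrac2v|\nabla v|^2-\langle\nabla(f-H),e_{n+2}\rangle v^2$ is derived in the text immediately before the lemma, and the lemma is just the specialisation $f=H-\alpha_\epsilon$, with the tangential gradient $\naM\alpha_\epsilon$ appearing as you note. Nothing is missing.
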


Compared with standard mean curvature flow we thus obtain an
additional term that contains a derivative of the penalty function and
which may thus become arbitrarily large in the limit $\eps\searrow0$. 

However, as illustrated in Figure \ref{gradient pic}, in a point where
the evolving surface is `steeper' than the obstacle, the penalisation
helps to reduce $v$, because $\alpha_\epsilon$ grows with increasing
(negative) distance to $\partial\O$.

\begin{figure}[htb] 
 \includegraphics[height=4cm]{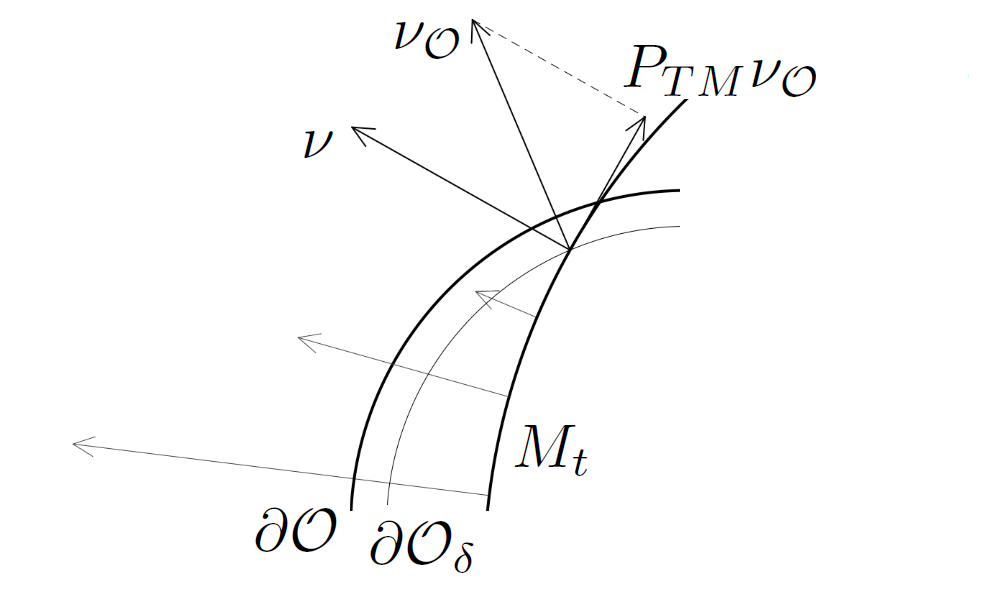}
 \caption{Penalising vectorfield and normals in a point where $v\geq
   v_\nu$.}
 \label{gradient pic} 
\end{figure} 

More precisely, we obtain
\begin{remark}\label{rem:sign nabla alpha eta}
  Given a point $p\in \partial\Ob_\delta$ in a neighbourhood of which
  $\partial \Ob_\de$ is a $C^1$ graph we let
  $v_\Ob:=\langle\nu_{\O},{e_{n+2}}\rangle^{-1}$ be the gradient
  function (of the level sets) of the obstacle.  Then at each point
  $p\in\Ob\cap M_t$ where
  $$v\ge v_\Ob,$$
  we have
  $$\left\langle\naM\alpha_\epsilon,{e_{n+2}}\right\rangle\leq 0.$$
\end{remark}

\begin{proof}
  Since both the evolving hypersurface and the level sets of the
  obstacle are graphical and thus $v,v_\Ob$ are well defined and
  positive we can use \eqref{eq:Daleps} to compute
  \[ \left\langle\naM\alpha_\epsilon,{e_{n+2}}\right\rangle=\bepsp
  \left(\la \nu_\Ob,e_{n+2}\ra -\langle\nu_\Ob,\nu\rangle\cdot \la
    \nu,e_{n+2}\ra\right)=\bepsp\cdot
  \left(\frac{1}{v_\Ob}-\frac{\langle\nu_\Ob,\nu\rangle}{v}\right)\]
  which gives the claim as $\bepsp\leq 0$.
\end{proof}

We finally turn to the evolution equation satisfied by the norm of the
second fundamental form.

It is well known that $\abs{A}^2$ evolves along a normal flow
\eqref{eq:general-flow} according to
$$\tdt\abs{A}^2=2fA^k_iA^i_jA^j_k+2A^{ij}\na_i\na_j f$$ 
as well as that
$$\Delta\abs{A}^2 =2A^{ij}\na_i\na_j H+2\abs{\na
  A}^2+2HA^k_iA^i_jA^j_k-2\abs{A}^4.$$
This implies the general formula
$$\left(\tdt-\Delta\right)\abs{A}^2
=-2A^{ij}\na_i\na_j(H-f) -2(H-f)A^k_iA^i_jA^j_k
+2\abs{A}^4-2\abs{\na A}^2$$ which in our case becomes

\begin{lemma}
  For hypersurfaces evolving by the penalised flow \eqref{eps fluss},
  the norm of the second fundamental form fulfils
  \beqa \label{eq:evolA-1}
  \left(\tdt-\Delta\right)\abs{A}^2&=-2\abs{\na A}^2+2\abs{A}^4
  -2\aleps A^k_iA^i_jA^j_k -2\na_i\na_j \aleps A^{ij}.
  \eeqa
\end{lemma}

The last term in this equation, given as the covariant derivative of
the vector field $\naM\aleps \in \Gamma(TM)$, needs to be analysed
carefully as it contains a second order derivative of the penalty
function. As such it can be of order $\eps^{-2}$ at points in the
obstacle which might be reached by the evolving hypersurface, compare
also Section \ref{sect:penetration}.

The second covariant derivative of the penalisation function
$\alpha_\epsilon$ is given by
\beqa \label{eq:D2aleps} \nabla_i\nabla_j
  \aleps&=\na_{i}(\nabar_{\pxj F}\aleps)
=\na_{i}\big((\bepsp\circ\dist_\pO)\cdot\langle \nu_\Ob,\pxj
F\rangle\big)\\
&=\bepspp\cdot\langle\nu_\Ob,\pxi F\rangle \cdot \langle \nu_\Ob,\pxj
F\rangle +\bepsp\cdot\left\langle \bar\na_{\pxi F}\nu_\Ob,\pxj
  F\right\rangle +\bepsp \langle\nu_\Ob,\pxi\pxj F\rangle.  \eeqa

The last term in this formula is
given by
$$\bepsp \langle\nu_\Ob,\pxi\pxj F\rangle=- \bepsp A_{ij}\la
\nuob,\nu\ra.$$ 

For a better understanding of the penultimate term in
\eqref{eq:D2aleps}, we choose an orthonormal basis $(e_a)$ of the
tangent space to the level set $\pO_\delta $ which contains our point
$p$ and write
$$\pxi F=\langle\pxi F,e_a\rangle\delta^{ab}e_b+\langle\pxi
F,\nu_\Ob\rangle \cdot \nu_\Ob.$$ 
In the resulting formula 
$$\nabar_{\pxi F}\nu_\Ob=\langle\pxi F,e_a\rangle \delta^{ab}\cdot
\nabar_{e_b}\nu_\Ob +\langle \pxi F,\nu_\Ob\rangle
\nabar_{\nu_\Ob}\nu_\ob,$$ 
the first term contains
$$\nabar_{e_b}\nu_\Ob=\left\la \nabar_{e_b}\nu_\Ob,e_c\right\ra
\de^{cd} e_d=A_{bc}^\Ob \de^{cd}e_d,$$ 
the (locally) bounded second fundamental form of the obstacle (or
rather its level set $\pO_\de$), while the second term can be seen to
vanish identically; indeed since $\abs{\nabar\dist_\pO}^2\equiv 1$ we
obtain for every $\gamma=1,\ldots,n+2$
\begin{align*}
  \left(\nabar_{\nu_\Ob}\nu_{\Ob}\right)^\gamma =&\,\nu_\Ob^\eta
  \tfracp{}{y^\eta} \nu_\Ob^\gamma =\sum_{\eta=1}^{n+2}
  \big(\tfracp{}{y^\eta}\tfracp{}{y^\gamma}\dist_\pO\big)
  \tfracp{}{y^\eta}\dist_\pO =\tfrac12
  \tfracp{}{y^\gamma}\left\lvert\nabar \dist_\pO\right\rvert^2=0.
\end{align*}
Thus we can express the coefficient in the penultimate term in
\eqref{eq:D2aleps} 
\begin{equation}
\label{def:AO-tilde}\left\la \nabar_{\pxi F} \nuOb,\pxj F\right\ra
=A_{bc}^\Ob 
\de^{cd}\de^{ab} \la e_a,\pxi F\ra\cdot \la e_d,\pxj F\ra =:\tilde
A^\Ob_{ij},
\end{equation}
$i,j \in\{1,..,n+1\}$,
in terms of a tensor $\tilde A^\Ob$ which is controlled by  $A^\Ob$.

All in all, the derivative of the penalisation is thus given by
\beqa \label{eq:D2aleps2}
\na_i \na_j \aleps
&=\bepspp\la\nu_\Ob,\pxi F\ra\cdot\la\nu_\Ob,\pxj F\ra +\bepsp\tilde
A_{ij}^\Ob-\bepsp A_{ij}\la \nu_\Ob,\nu\ra 
\eeqa
which, once inserted into \eqref{eq:evolA-1}, results in 
\begin{lemma}
  For hypersurfaces evolving by the penalised flow \eqref{eps fluss}, 
  we have 
  \begin{align}
    \left(\tdt-\Delta\right)\abs{A}^2=&\,-2\abs{\na
      A}^2+2\abs{A}^4-2\aleps
    A^k_iA^i_jA^j_k\nonumber\\
    \label{eq:evolA2}&\,-2\bepspp\la \nu_\Ob,\pxi F\ra \cdot \la
    \nu_\Ob,\pxj F\ra A^{ij} -2\bepsp\tilde A_{ij}^\Ob A^{ij} +2\bepsp
    \la\nu,\nu_\Ob\ra \abs{A}^2,
  \end{align}
where $\tilde A_{ij}^\Ob$ is given by
  \eqref{def:AO-tilde}.
\end{lemma}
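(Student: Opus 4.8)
The plan is to read off \eqref{eq:evolA2} from the two identities already established, namely the general evolution equation \eqref{eq:evolA-1} for $\abs{A}^2$ under the penalised flow and the formula \eqref{eq:D2aleps2} for the Hessian of the penalisation $\aleps=\beps\circ\dist_\pO$; no new geometric input is needed. Recall that \eqref{eq:evolA-1} (which itself comes from the standard Simons-type evolution identity for a normal flow with speed $f=H-\aleps$, so that $H-f=\aleps$) collects the entire effect of the penalisation in the single term $-2\na_i\na_j\aleps\,A^{ij}$. So the task reduces to substituting \eqref{eq:D2aleps2} into that term and contracting.

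Carrying this out: \eqref{eq:D2aleps2} writes $\na_i\na_j\aleps$ as the sum of $\bepspp\la\nu_\Ob,\pxi F\ra\la\nu_\Ob,\pxj F\ra$ (from the convexity of $\beta$ along the distance direction), $\bepsp\tilde A_{ij}^\Ob$ (the tangential part of $\nabar_{\pxi F}\nu_\Ob$, i.e.\ a contraction of the second fundamental form of the level set $\pO_\de$ through $p$, as in \eqref{def:AO-tilde}), and $-\bepsp A_{ij}\la\nu_\Ob,\nu\ra$ (from the Gau\ss{} formula \eqref{Gauss formula} applied to $\pxi\pxj F$). Contracting with $A^{ij}$, the first two summands pass through unchanged and supply the two middle terms on the right of \eqref{eq:evolA2}, while the third supplies $+2\bepsp\la\nu_\Ob,\nu\ra\,A_{ij}A^{ij}=+2\bepsp\la\nu,\nu_\Ob\ra\abs{A}^2$, using $A_{ij}A^{ij}=\abs{A}^2$. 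The remaining terms $-2\abs{\na A}^2+2\abs{A}^4-2\aleps A^k_iA^i_jA^j_k$ of \eqref{eq:evolA-1} are not affected by the substitution, which yields \eqref{eq:evolA2}. The only sign one must keep straight is $\bepsp\le0$ (and $\bepspp\ge0$ where $\beta>0$), which is relevant for the later sign discussion of the $\abs{A}^2$-term but not for the identity.

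I therefore expect no genuine obstacle in the lemma itself: the substance lies one step earlier, in \eqref{eq:D2aleps2}. If I had to redo that, the point I would treat most carefully is the decomposition of $\nabar_{\pxi F}\nu_\Ob$ into tangential and normal parts relative to $\pO_\de$ — identifying the tangential part with the contracted second fundamental form $\tilde A^\Ob$ of $\pO_\de$, and checking that the normal part vanishes, i.e.\ $\nabar_{\nu_\Ob}\nu_\Ob=0$, which follows by differentiating the identity $\abs{\nabar\dist_\pO}^2\equiv1$ — together with the use of \eqref{eq:Daleps} to write $\naM\dist_\pO=P_{TM}(\nu_\Ob)$. I would also keep in mind that the whole computation only makes sense at points of $M_t\cap\Ob$ lying in the $C^2_\loc$-neighbourhood of $\partial\Ob$ on which $\dist_\pO$ is twice differentiable, which is precisely the regime guaranteed by the penetration estimate (Lemma~\ref{lemma:penetration}).
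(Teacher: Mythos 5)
Your proposal is correct and coincides with the paper's own derivation: the lemma is obtained precisely by inserting \eqref{eq:D2aleps2} into the term $-2\na_i\na_j\aleps\,A^{ij}$ of \eqref{eq:evolA-1} and contracting, with the third summand producing $+2\bepsp\la\nu,\nu_\Ob\ra\abs{A}^2$ via $A_{ij}A^{ij}=\abs{A}^2$. Your remarks on where the computation is valid (the $C^2_\loc$-neighbourhood of $\partial\Ob$ guaranteed by the penetration estimate) match the paper's standing assumption for that section.
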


Contrary to the evolution equation for the gradient function, we
cannot expect the additional terms to have a sign, so deriving suitable
a priori bounds on the second fundamental form will be one of the main
tasks in the analysis of the penalised flow \eqref{eps fluss}. As we
shall see, we can deal with this problem by considering a modified
second fundamental form quantity which depends also on the penalty
function itself.

For this we shall in particular need the evolution equation of the
penalty function itself which is given by
\begin{lemma}
  For hypersurfaces evolving by the penalised flow \eqref{eps fluss},
  we have \beq
\label{eq:evol-aleps}
\left(\tdt-\Delta_M\right)\aleps=\bepsp\aleps\la
\nu_\Ob,\nu\ra-\bepspp\abs{P_{TM}\nu_\Ob}^2-\bepsp\tilde A_{ij}^\Ob
g^{ij}.\eeq 
\end{lemma}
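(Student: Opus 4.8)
The plan is to compute $\left(\tdt-\Delta_M\right)\aleps$ directly from the representation $\aleps = \beps\circ\dist_\pO$ restricted to the evolving hypersurface $M_t$, treating $\aleps$ as a function on the ambient space $\R^{n+2}$ and using the chain rule together with the evolution equations already assembled. First I would handle the time derivative. Since $\dist_\pO$ is a fixed ambient function, along the flow \eqref{eps fluss} we have $\tdt(\dist_\pO\circ F) = \la\bar\na\dist_\pO,\dot F\ra = \la\nu_\Ob,-(H-\aleps)\nu\ra = -(H-\aleps)\la\nu_\Ob,\nu\ra$, hence $\tdt\aleps = \bepsp\cdot(-(H-\aleps))\la\nu_\Ob,\nu\ra$. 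For the spatial Laplacian, I would use $\Delta_M\aleps = g^{ij}\na_i\na_j\aleps$ and plug in the formula \eqref{eq:D2aleps2} already derived:
\[
\Delta_M\aleps = \bepspp\,g^{ij}\la\nu_\Ob,\pxi F\ra\la\nu_\Ob,\pxj F\ra + \bepsp\,\tilde A^\Ob_{ij}g^{ij} - \bepsp\,\la\nu_\Ob,\nu\ra\,g^{ij}A_{ij}.
\]

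Now I would combine the two. Note $g^{ij}\la\nu_\Ob,\pxi F\ra\la\nu_\Ob,\pxj F\ra = |P_{TM}\nu_\Ob|^2$, since $P_{TM}\nu_\Ob = g^{ij}\la\nu_\Ob,\pxi F\ra\na_j F$ and the $\na_j F$ form an orthonormal-up-to-$g$ frame; and $g^{ij}A_{ij} = H$. Therefore
\[
\left(\tdt-\Delta_M\right)\aleps = -\bepsp(H-\aleps)\la\nu_\Ob,\nu\ra - \bepspp|P_{TM}\nu_\Ob|^2 - \bepsp\tilde A^\Ob_{ij}g^{ij} + \bepsp H\la\nu_\Ob,\nu\ra.
\]
The two terms containing $\bepsp H\la\nu_\Ob,\nu\ra$ cancel, leaving exactly $\bepsp\aleps\la\nu_\Ob,\nu\ra - \bepspp|P_{TM}\nu_\Ob|^2 - \bepsp\tilde A^\Ob_{ij}g^{ij}$, which is \eqref{eq:evol-aleps}.

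The only genuinely delicate point is the justification of $\Delta_M\aleps = g^{ij}\na_i\na_j\aleps$ with the $\na_i\na_j\aleps$ given by \eqref{eq:D2aleps2}: one must remember that $\na_i\na_j\aleps$ is the covariant Hessian on $M_t$ of the restricted function, which is why \eqref{eq:D2aleps} correctly includes the term $\bepsp\la\nu_\Ob,\pxi\pxj F\ra = -\bepsp A_{ij}\la\nu_\Ob,\nu\ra$ coming from the second fundamental form of $M_t$ (the ambient Hessian of $\dist_\pO$ would instead involve the second fundamental form of the level set $\pO_\de$). Since \eqref{eq:D2aleps2} has already been established, this is bookkeeping rather than an obstacle. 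A secondary point to state explicitly is that the computation is valid only at points $p\in M_t\cap\Ob$ lying in the tubular neighbourhood of $\partial\Ob$ where $\dist_\pO\in C^2_\loc$; outside $\ol\Ob$ both sides vanish since $\bepsp=\bepspp=0$ there. With these remarks in place the identity follows by the cancellation displayed above.
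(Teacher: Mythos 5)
Your proof is correct and is essentially the paper's own argument: both rely on tracing \eqref{eq:D2aleps2} and on $\tdt\aleps=\bepsp\la\nu_\Ob,\dot F\ra$, the only cosmetic difference being that the paper packages the cancellation of the $H$-terms as $\bepsp\la\nu_\Ob,(\tdt-\Delta_M)F\ra=\bepsp\aleps\la\nu_\Ob,\nu\ra$ whereas you display it explicitly. Your closing remarks on the domain of validity and on the meaning of the covariant Hessian match the paper's standing assumptions.
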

Observe that the second term of this evolution equation gives a strong
negative contribution (scaling as $\eps^{-2}$) in points of the
obstacle where the evolving surface is not tangential to the level
sets of the obstacle.

\begin{proof}
  The formulas for the derivatives of the penalty function, see
  \eqref{eq:D2aleps2} and the formula following \eqref{eq:Daleps},
  immediately imply that 
  \begin{align*} (\tdt-\Delta_M)\aleps=&\,\bepsp \left\la \nu_\Ob,
      \left(\tdt-\Delta_M\right)F\right\rangle -\bepspp\cdot \la
    \nu_\Ob,\pxi F\ra\cdot \la\nu_\Ob,\pxj F\ra
    g^{ij}
    -\bepsp \cdot \tilde A_{ij}^\Ob g^{ij}\\
    =&\, \bepsp\aleps\la
    \nu_\Ob,\nu\ra-\bepspp\abs{P_{TM}\nu_\Ob}^2-\bepsp\tilde
    A_{ij}^\Ob g^{ij}
  \end{align*}
  as claimed.
\end{proof}

\section{Estimates on the depth of penetration
}\label{sect:penetration}
We shall later obtain the desired viscosity solution as limit of
solutions to Dirichlet problems for \eqref{eps fluss} to be solved on
larger and larger balls $B_R(0)$ where we will truncate the initial
map $u_0$ at levels $L\ll0$.  In this context we shall always assume
that $R$ is sufficiently large so that $\psi< L$ outside $B_R(0)$.

We prove the following bound for the amount that the evolving
hypersurface can sink into the obstacle.
\begin{lemma}\label{lemma:penetration}
  For any height $\ell\in\R$, there exists a number
  $C_0(\ell)\in(1,\infty)$ with the following property:

  For any $L\in(-\infty,\ell)$ and $R>0$ as above, there exists
  $\eps_0(L)>0$, such that for $0<\epsilon\le\epsilon_0(L)$ any
  hypersurface $M_t=\graph(u^L_{\epsilon,R}(\cdot, t))$ evolving
  according to
  \begin{equation}
  \label{eq:eps L graph}
  \begin{cases}
    \dot u =\sqrt{1+|Du|^2}\cdot\left(\divergenz
      \left(\fracd{Du}{\sqrt{1+|Du|^2}}\right) +\alpha_\epsilon\right)
    &\text{in }B_R(0)\times[0,T),\\ u=L&\text{on }\partial
    B_R(0)\times[0,T),\\ u(\cdot,0)=u_0\ge\max\{\psi,L\}&\text{in
    }B_R(0),
  \end{cases}
  \end{equation}
  satisfies \beq\label{est:pen-depth} \dist_{\partial\O}(p)\ge
  -C_0(\ell)\cdot \epsilon \eeq in any point $p\in
  M_t\cap\left\{x^{n+2}\geq \ell\right\}$ and for all times
  $t\in[0,T)$.
\end{lemma}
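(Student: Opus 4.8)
The plan is to prove the penetration bound \eqref{est:pen-depth} by constructing a suitable barrier and invoking the comparison principle for the penalised flow \eqref{eq:eps L graph}. The natural quantity to control is the penalty function $\alpha_\epsilon = \beps \circ \dist_\pO$ itself, or equivalently $\dist_\pO$ along the flow, restricted to the region $\{x^{n+2} \ge \ell\}$. The key observation is that $\beps(s) = \beta(s/\eps)$ becomes large (of order $\beps \sim \eps^{-2}\beta''$ contributions) precisely when $\dist_\pO \sim -C\eps$ with $C$ large, so if we can show $\alpha_\epsilon$ stays bounded by a constant depending only on $\ell$ (and not on $\eps$), the stated distance bound follows from the structure of $\beta$.

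\medskip

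First I would set up the barrier. The idea is to use $\alpha_\epsilon$ composed with the flow as the subject of a maximum-principle argument, but since $\alpha_\epsilon$ alone does not have a good evolution equation with a favourable sign (see \eqref{eq:evol-aleps}, where the first term $\bepsp\aleps\la\nu_\Ob,\nu\ra$ can have either sign), one must combine it with another quantity that controls the height. A clean approach is to work directly with the geometric picture: in the region $\{x^{n+2} \ge \ell\}$, the obstacle $\partial\O$ is a $C^{1,1}_\loc$ graph, hence near $\partial\O\cap\{x^{n+2}\ge\ell\}$ the distance function and its level sets are $C^2$ with curvature bounds depending only on $\ell$. One then considers, for a constant $C_0 = C_0(\ell)$ to be chosen, the level set $\pO_{C_0\eps}$ and argues that $M_t$ cannot cross below it. Concretely, I would compare $M_t$ with a stationary (or slowly receding) barrier surface obtained by translating the obstacle's level sets: the level sets $\pO_\delta$ for $\delta \sim C_0\eps$ are smooth graphs whose mean curvature is bounded by $C(\ell)$ in the region of interest, and on such a level set the penalisation term $\aleps = \beps(-\delta)$ is comparable to $\beta(-C_0)$. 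Choosing $C_0$ large enough that $\alpha_\epsilon = \beps(-\delta)$ at $\delta = C_0\eps$ dominates the (bounded) mean curvature of the level set, the penalised flow \eqref{eq:eps L graph} forces $M_t$ to be pushed outward at any contact point, so that contact with $\pO_{C_0\eps}$ from the obstacle side is impossible for $t > 0$ once it does not occur at $t=0$.

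\medskip

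More precisely, the comparison argument goes as follows. The evolution equation $\dot u = \sqrt{1+|Du|^2}(\divergenz(\ldots) + \alpha_\epsilon)$ means the normal velocity of $M_t$ is $-(H - \aleps)$. Suppose for contradiction that at some first time $t_0 > 0$ and some point $p_0 \in M_{t_0} \cap \{x^{n+2}\ge \ell\}$ one has $\dist_\pO(p_0) = -C_0\eps$ while $\dist_\pO \ge -C_0\eps$ on $M_t \cap \{x^{n+2}\ge\ell\}$ for $t \le t_0$. At such a touching point, $M_{t_0}$ lies (locally, near $p_0$, on the side $x^{n+2}\ge\ell$) above the level set $\pO_{C_0\eps}$, so $\nu = \nu_\Ob$ at $p_0$ and the mean curvature of $M_{t_0}$ at $p_0$ is $\le$ the mean curvature of $\pO_{C_0\eps}$, which is bounded by $C(\ell)$. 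Choosing $\eps_0(L)$ small and $C_0(\ell)$ large enough that $\beta(-C_0) \cdot \text{(normalisation)} > C(\ell)$ — using that $\beps(-C_0\eps) = \beta(-C_0)$ is a fixed positive constant independent of $\eps$ — we get that the normal velocity $-(H - \aleps)$ at $p_0$ is strictly positive in the outward direction, so $\dist_\pO$ is strictly increasing along the flow at $p_0$, contradicting that $t_0$ is a first touching time. One must also handle the initial and boundary conditions: at $t=0$, $u_0 \ge \max\{\psi, L\}$ ensures $M_0$ is disjoint from $\O$ (as $u_0 \ge \psi$ means $\dist_\pO \ge 0$ on $M_0$); on $\partial B_R(0)$, $u = L < \ell$ so those points are not in $\{x^{n+2}\ge\ell\}$ and do not interfere, provided $R$ is large enough that $\psi < L$ there as assumed. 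The boundary term at $|\hat x|$ approaching $\partial\Pro$ or infinity is controlled because $\psi \to -\infty$ there, so again those points drop below level $\ell$.

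\medskip

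The main obstacle I anticipate is making the "touching point" comparison rigorous without assuming more regularity than we have: $M_t$ is a priori only a weak/smooth solution of the penalised flow on balls (the penalised flow \eqref{eps fluss} is a smooth parabolic equation, so actually $u^L_{\eps,R}$ is smooth for $t>0$, which helps), but the geometric statement "at a first touching time the mean curvature of $M_{t_0}$ is bounded by that of the barrier" requires care with the fact that the level sets $\pO_\delta$ are only $C^2$ (from $\psi \in C^{1,1}_\loc$), and one must ensure the relevant neighbourhood of $\partial\O$ where $\dist_\pO$ is $C^2$ actually contains the strip $\{-C_0\eps \le \dist_\pO \le 0\} \cap \{x^{n+2}\ge \ell\}$ for $\eps$ small — this is exactly why $\eps_0$ is allowed to depend on $L$ (hence on how deep into the obstacle, i.e.\ how negative $\psi$ gets, we need to look) and why the tubular-neighbourhood remark after Definition \ref{def: ini data} about $\partial\O$ having bounded second fundamental form and a uniform tubular neighbourhood in the region of interest is invoked. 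A cleaner alternative to the "first touching time" heuristic is to run the standard parabolic maximum principle on the function $w := \dist_\pO(F(\cdot,t)) + C_0\eps$ restricted to $M_t \cap \{x^{n+2}\ge\ell\}$, using the evolution equation for $\dist_\pO$ along the flow (computable from $(\tdt - \Delta)F = \aleps\nu$ together with the chain rule and the $C^2$ bounds on $\dist_\pO$), showing $(\tdt - \Delta) w \ge 0$ wherever $w$ is small and negative, and concluding $w \ge 0$; I would present the argument in that form to avoid subtleties with the geometry of contact points, recording the required $\eps^{-1}$ and $\eps^{-2}$ bookkeeping on $\bepsp, \bepspp$ only to the extent needed to see that the penalisation term beats the bounded zeroth- and first-order geometric terms once $C_0$ is large.
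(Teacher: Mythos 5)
Your overall strategy---compare $M_t$ with a surface pushed a depth of order $\eps$ into the obstacle, on which the penalisation $\beta(-C_0)$ dominates the (bounded) curvature of the level sets of $\dist_{\partial\O}$---is the right one and matches the spirit of the paper's proof. But there is a genuine gap in the localisation. You run the first-touching-time (or maximum-principle) argument only on $M_t\cap\{x^{n+2}\ge\ell\}$ with a barrier at \emph{constant} depth $C_0\eps$. That region has a moving relative boundary $M_t\cap\{x^{n+2}=\ell\}$ on which you have no information: the first ``touching point'' $p_0$ can occur at height exactly $\ell$ (or the violation can flow into the region from below), and at such a point $M_{t_0}$ does not lie locally on one side of the level set $\partial\O_{C_0\eps}$, so the mean curvature comparison $H^{M_{t_0}}(p_0)\le H^{\partial\O_{C_0\eps}}(p_0)$ fails. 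Your remark that points near $\partial B_R$ or near $\partial\P$ ``drop below level $\ell$ and do not interfere'' is exactly backwards: dropping below level $\ell$ is precisely how they interfere, because the comparison region is not invariant and carries no boundary data there. Extending the constant-depth barrier all the way down to the truncation level $L$ does not work either, since the curvature of the obstacle's level sets is only locally bounded and grows as $\psi\to-\infty$, so a constant $C_0$ beating it would have to depend on $L$---whereas the lemma asserts (and the paper stresses) that $C_0(\ell)$ is independent of $L$.

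The paper closes exactly this gap by replacing the constant depth with a height-dependent one: it chooses a nonincreasing $f_0\in C^2_{\loc}(\R,\R^+)$ with $f_0(s)\ge-\beta^{-1}\bigl(2\sqrt n\,[\sup_{\{x^{n+2}\ge s-1\}\cap\partial\O}|A^\O|+1]\bigr)$ and takes as barrier the deformed level set $\mathcal S_\eps=\{\dist_{\partial\O}=-\eps f_0\}$ of Lemma \ref{lemma:comp-surf-zwei-ff}. Because the depth $\eps f_0(x^{n+2})$ grows as the height decreases, $|H^{\mathcal S_\eps}|\le\aleps$ holds on all of $\mathcal S_\eps\cap\{x^{n+2}>L-1\}$, so $\mathcal S_\eps$ is a \emph{global} stationary subsolution there; taking the maximum of this with the plane $\{x^{n+2}=L\}$ (also a subsolution, since $\aleps\ge0$) gives a barrier below the initial surface and below the Dirichlet data on $\partial B_R$, so the comparison runs on all of $B_R\times[0,T)$ with no free boundary at height $\ell$. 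The bound on $\{x^{n+2}\ge\ell\}$ is then read off a posteriori as $C_0(\ell)=f_0(\ell-1)$, which is manifestly independent of $L$. To repair your proof you would need to introduce this height-dependent depth (or an equivalent device) so that the comparison surface is a subsolution on the whole truncated domain rather than only above height $\ell$.
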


We stress that the level $L$ at which we truncate the hypersurface
only determines the range of admissible parameters $\eps$, but that the
bounds on the depth of penetration on $\left\{x^{n+2}>\ell\right\}$
are \textit{independent} of $L$. To achieve this, we shall compare the
evolving hypersurface with deformed level sets to $\partial\O$ of the
following type.

\begin{lemma}
  \label{lemma:comp-surf-zwei-ff}
  Given any function $f_0\in C_{\loc}^2(\R,\R_+)$ 
  and any number $\eps>0$, we let 
    $$\mathcal S_\epsilon:=\left\{p\in \R^{n+2}\colon
      \dist_{\pO}(p)=-\epsilon f_0(p)\right\}.$$
    
    Then for any $L>-\infty$ and $\delta>0$,
  there exists a number $\eps_1=\eps_1(L,f_0,\delta)>0$ such
  that for any $|\eps|<\eps_1$ the hypersurfaces
  $$S_\eps\cap \{x^{n+2}>L\}$$
 are of class $C^2$ with second fundamental form bounded by 
  $$\abs{A^{\mathcal S_\epsilon}}(p)\leq  (1+\delta)\cdot 
  \big| A^\Ob\big|(p) +\delta$$ for any $p\in\{x^{n+2}>
  L\}\cap\mathcal S_\epsilon$, where $ A^\O$ denotes the
  second fundamental form of the level set of $\dist_{\partial\O}$ that
  contains $p$.
  
  In particular, there is a number $\eps_2>0$ depending only on 
  $L$, the function $f_0$ and 
  on $\sup\limits_{\partial \Ob\cap\{x^{n+2}>L-1\}}\abs{A^\Ob}$ so that 
  $$\abs{A^{\mathcal S_\epsilon}}(p)\leq  2\cdot \Big(\sup_{B_1(p)\cap
    \pO} \abs{A^\Ob}+1\Big)$$ for $p\in \mathcal{S}_\eps\cap
  \{x^{n+2}>L\}$ and $\abs{\eps}<\eps_2$.
\end{lemma}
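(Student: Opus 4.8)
The plan is to compute the second fundamental form of $\mathcal S_\eps$ directly, treating it as a level set of the $C^2_{\loc}$ function
\[
\Phi_\eps(p):=\dist_{\pO}(p)+\eps f_0(p),
\]
so that $\mathcal S_\eps=\{\Phi_\eps=0\}$, and comparing it with the level sets $\{\dist_{\pO}=-\de\}$ of the unperturbed distance function. First I would recall that in a tubular neighbourhood of $\pO$ (which, by the remarks preceding the lemma and Gerhardt's regularity, exists with uniform thickness over any slab $\{x^{n+2}>L-1\}$), the function $d:=\dist_{\pO}$ is $C^2_{\loc}$ with $|\bar\na d|\equiv 1$, hence $\bar\na^2 d\cdot\bar\na d=0$ and the unit normal to the level set through $p$ is $\nu_\Ob=\bar\na d$, with $\bar\na^2 d$ restricted to $\nu_\Ob^\perp$ equal to (minus, up to sign convention) the shape operator $A^\Ob$ of that level set. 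Then the unit normal to $\mathcal S_\eps$ is
\[
\nu_{\mathcal S_\eps}=\frac{\bar\na\Phi_\eps}{|\bar\na\Phi_\eps|}
=\frac{\bar\na d+\eps\,\bar\na f_0}{|\bar\na d+\eps\,\bar\na f_0|},
\]
and since $|\bar\na f_0|$ is locally bounded on the relevant slab, $|\bar\na\Phi_\eps|=1+O(\eps)$ and $\nu_{\mathcal S_\eps}=\nu_\Ob+O(\eps)$, uniformly on $\{x^{n+2}>L\}$ once $|\eps|<\eps_1(L,f_0,\de)$ is small enough that $\bar\na\Phi_\eps\ne 0$ and the level set stays inside the tubular neighbourhood.

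The second fundamental form is then $A^{\mathcal S_\eps}=\big(\bar\na^2\Phi_\eps/|\bar\na\Phi_\eps|\big)$ restricted to $T\mathcal S_\eps=\nu_{\mathcal S_\eps}^\perp$, i.e.
\[
A^{\mathcal S_\eps}(X,Y)=\frac{1}{|\bar\na\Phi_\eps|}\Big(\bar\na^2 d(X,Y)+\eps\,\bar\na^2 f_0(X,Y)\Big),\qquad X,Y\in T_p\mathcal S_\eps.
\]
The term $\eps\,\bar\na^2 f_0$ contributes at most $\eps\sup_{\text{slab}}|\bar\na^2 f_0|$, which I absorb into the additive $\de$ by shrinking $\eps_1$. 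For the main term I would note that $\bar\na^2 d(X,Y)=A^\Ob(P X,PY)$ where $P$ is the orthogonal projection onto $\nu_\Ob^\perp=T(\text{level set of }d)$ and $A^\Ob$ is evaluated on the level set of $d$ through $p$; since $T_p\mathcal S_\eps$ and that level set differ by an angle $O(\eps)$, writing $X=PX+O(\eps)|X|$ and using bilinearity, $\bar\na^2 d(X,Y)=A^\Ob(X,Y)+O(\eps)|A^\Ob|\,|X|\,|Y|$ (here $A^\Ob$ on the right is understood via the identification of the nearly-parallel tangent spaces). Combining with $1/|\bar\na\Phi_\eps|=1+O(\eps)$ gives
\[
|A^{\mathcal S_\eps}|(p)\le(1+C\eps)|A^\Ob|(p)+C\eps\sup_{\text{slab}}|\bar\na^2 f_0|,
\]
and choosing $\eps_1$ so that $C\eps_1\le\de$ and $C\eps_1\sup|\bar\na^2 f_0|\le\de$ yields the claimed bound. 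The ``in particular'' statement follows by taking $\de=1$: the needed smallness of $\eps_2$ then depends only on $L$, on $f_0$ (through $|\bar\na f_0|$, $|\bar\na^2 f_0|$ on the slab), and on $\sup_{\pO\cap\{x^{n+2}>L-1\}}|A^\Ob|$ (which controls the size of the tubular neighbourhood and the $C^2$-norm of $d$ there); and for $p\in\mathcal S_\eps\cap\{x^{n+2}>L\}$ the level set of $d$ through $p$ lies within distance $\eps f_0(p)$ of $\pO$, so $\sup_{B_1(p)\cap\pO}|A^\Ob|+1$ dominates $|A^\Ob|(p)$ once $\eps_2$ is small, giving $|A^{\mathcal S_\eps}|(p)\le 2\big(\sup_{B_1(p)\cap\pO}|A^\Ob|+1\big)$.

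The main obstacle I expect is bookkeeping the slab-uniformity: one must check that all the $O(\eps)$ error terms are controlled by quantities depending only on $L$, $f_0$ and $\sup|A^\Ob|$ over $\pO\cap\{x^{n+2}>L-1\}$ — in particular that the tubular neighbourhood of $\pO$ over the slab $\{x^{n+2}>L-1\}$ has thickness bounded below and $d\in C^2$ there with controlled norm, which is exactly the regularity of $\pO$ recorded after Definition \ref{def: ini data} (bounded $|A^\Ob|$ plus a uniform tubular neighbourhood). The geometric identities $|\bar\na d|\equiv 1$, $\bar\na^2 d\cdot\bar\na d=0$, and $\bar\na^2 d|_{\nu_\Ob^\perp}=A^\Ob$ are standard for the signed distance function and require no new work; the only genuine computation is the first-order Taylor expansion of $\nu_{\mathcal S_\eps}$ and of the projection $P$ in the parameter $\eps$, which is routine.
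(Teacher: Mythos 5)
Your proposal is correct and follows essentially the same route as the paper's proof: both compute $A^{\mathcal S_\eps}$ as the restriction of $D^2(\text{defining function})/\lvert D(\text{defining function})\rvert$ to the tangent space, exploit $\lvert\bar\na d\rvert\equiv1$, $\bar\na^2d(\nu_\Ob,\cdot)=0$ and $\bar\na^2 d|_{\nu_\Ob^\perp}=A^\Ob$, absorb the $O(\eps)$ errors coming from $f_0$ and from the tilting of the tangent plane into the $(1+\delta)$ and $+\delta$ terms, and deduce the second claim from the standard comparison of curvatures of parallel hypersurfaces (the paper cites \cite[Lemma 14.17]{GT}). The only cosmetic difference is the choice of defining function --- you use $d+\eps f_0$ at level $0$ while the paper uses $d/f_0$ at level $\eps$ --- which yields the same expansion and the same error structure.
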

\begin{proof}
  We first recall that given any function $w\in C^2(\R^{n+2})$ and a
  point $p_0\in\R^{n+2}$ such that $Dw(p_0)\neq0$ one can compute the
  second fundamental form of the (locally $C^2$-) hypersurface
  \[\left\{p\in\R^{n+2}\colon w(p)=w(p_0)\right\}\]
  by 
  \[\tilde A^w(p_0)
  =\pm\frac{D^2w(p_0)}{|Dw|(p_0)}.\]
  
  In our case $\mathcal S_\epsilon=\left\{p\in\R^{n+2}\colon
    w(p)=\epsilon\right\}$ is such a level set for $w:=\frac {\rho}
  {f_0}$, where we write for short $\rho\equiv \dist_\pO$.
    
  Observe that the second term on the right-hand side in
  $$\bar \na w=\,\frac1{f_0}\bar \na \rho-\frac {\rho}{f_0^2}\na
  f_0=\frac 1{f_0}\nu_\Ob- \frac {\rho}{f_0^2}\na f_0$$ is small if
  $\eps$ and thus $\rho(p_0)$ is small, more precisely,
  \beq \label{est:ableit-w} \abs{\bar\na w-\frac1{f_0}\nu_\Ob}\leq
  C\cdot \rho\eeq for a constant $C$ depending only on $L$ and the
  choice of $f_0$.

  In particular, the normal to $\mathcal S_\epsilon$ at $p_0$ is given
  by \[\nu_{\mathcal{S}_\eps}(p_0)= \nu_\Ob(p_0)+ \rho(p_0)\cdot \xi\]
  for some vector $\xi$ whose length is again bounded in terms of the
  function $f_0$ and $L$.
  
  Similarly, we can adjust the orthonormal basis $(e_a)$ of the
  tangent space to $\partial \Ob_\delta$, $\delta=\rho(p_0)=\eps\cdot
  f_0(p_0)$, to give an orthonormal basis $e_a+\rho \cdot \xi_a$ of
  $T_{p_0}\mathcal{S}_\eps$, again with $\abs{\xi_a}\leq C$ as above.
  \par To prove the claim we now show that
  \[\big| A^{\mathcal S_\epsilon}(p_0)\big(e_a+ \rho
  \cdot\xi_a,e_b+\rho\cdot \xi_b\big)\big| \le \big|
  A^{\Ob}(p_0)(e_a,e_b)\big| +C\rho\cdot \left(1+\abs{A^{
      \Ob}(p_0)}\right).\]

  For this we first observe that the final term of
    \begin{align*}
    D^2w
    =&\,\frac1{f_0}D^2 \rho
    -\frac1{f_0^2}\left(D\rho\otimes Df_0+D f_0\otimes D\rho\right) 
    -\frac
    {\rho} {f_0^3}\big( f_0\cdot D^2f_0 -2Df_0\otimes Df_0\big),
  \end{align*}
  which contains $\rho$ itself rather than a derivative of it, must be
  small if $\eps$ is small.

  As $e_a$ is orthogonal to $\nu_\Ob$, we have $D\rho(e_a)=0$, so
  evaluating the second term for the basis $(e_a+\rho\xi_a)$ of
  $T_{p_0}\mathcal{S}_\eps$ gives also just a contribution of order $C
  \rho$, again with $C$ depending only on $f_0$ and $L$, in particular
  independent of the obstacle since $\abs{D\rho}=1$.

  Finally observe that the restriction of $D^2\rho $ to
  $T_{p_0}\partial\O_\delta$ is nothing else than the second
  fundamental form $A^\Ob$ of the level sets of the obstacle while
  $D^2\rho(\nu_\Ob,\cdot)$ vanishes.

  Combined with \eqref{est:ableit-w} we thus find that for $\eps>0$
  sufficiently small \beqas \big| A^{\mathcal
    S_\epsilon}(p_0)\big(e_a+ \rho \xi_a,e_b+\rho \xi_b\big)\big| \leq
  &\, \frac1{\abs{D w}}\left[ \frac{1}{f_0}\abs{A^\Ob(p_0)(e_a,e_b)}+C
    \rho \big(1+\abs{A^{\Ob}(p_0)}\big)\right]
  \\
  \leq &\, \abs{A^\Ob(p_0)(e_a,e_b)}+C\eps (1+\abs{A^{\Ob}(p_0)})
  \eeqas with constants that depend only on $L$ and the function
  $f_0$.  The first claim of the lemma immediately follows.

  To obtain the second claim, we recall the well known fact, see
  e.g. \cite[Lemma 14.17]{GT}, that in a tubular neighbourhood of
  $\partial \Ob$ one can express the principal curvatures of the level
  sets $\partial \Ob_\delta$ in terms of $\delta$ and the principal
  curvatures of $\partial \Ob$.  In particular, there is a constant
  $\eps_2>0$ depending only on $\sup_{\partial \Ob\cap\{x^{n+2}>L-1\}}
  \abs{A^{\Ob}}$ so that for any $p$ with $\abs{\dist_\pO(p)}\leq
  \eps_2$, we have $\abs{A^\Ob(p)}\leq \frac32 \sup_{\partial \Ob\cap
    B_1(p)}\abs{A^\Ob}$.  Reducing $\eps_2$ if necessary and combining
  this with the estimate proven above immediately yields the
  second claim.
\end{proof}

\begin{proof}[Proof of Lemma \ref{lemma:penetration}]
  \neueZeile
  \begin{enumerate}[(i)]
  \item As $\alpha_\epsilon\ge0$, any constant function $u_1$ fulfils
    $\dot u_1\le 1\cdot(0+\alpha_\epsilon)$, i.\,e.{} is a subsolution
    to $\dot u=\sqrt{1+|Du|^2}\cdot\left(\divergenz
      \left(\fracd{Du}{\sqrt{1+|Du|^2}}\right)
      +\alpha_\epsilon\right)$. In particular, the constant $L$ acts
    as a lower barrier for the solution $u$ of \eqref{eq:eps L
      graph}. 
  \item We choose a monotonically nonincreasing function $f_0\in
    C^2_{\loc}(\R,\R^+)$ such that
    \[f_0(s)\ge-\beta^{-1}\left(2\sqrt{n}\cdot\big[
      \sup\limits_{\{x^{n+2}\ge
        s-1\}\cap\partial\O}\left|A^\O\right|+1\big]\right)\] and
    consider as comparison surface $\mathcal S_\epsilon$ for
    $\epsilon\in(0,\epsilon_2)$ as in Lemma
    \ref{lemma:comp-surf-zwei-ff}.  Given an arbitrary point
    $p\in\mathcal S_\epsilon \cap\left\{x^{n+2}\ge L-1\right\}$, we
    observe that \beqas \left|H^{\mathcal S_\epsilon}(p)\right| \le &\,
    \sqrt{n}\left|A^{\mathcal S_\epsilon}(p)\right|\le
    2\sqrt{n}\cdot\big[\sup\limits_{\partial\O\cap\left\{x^{n+2}\ge
        p^{n+2}-1\right\}} \left|A^{\O}\right| +1\big]\\
     \le &\,\beta\left(-f_0\left(p^{n+2}\right)\right)
    =\beta\big(\tfrac{\dist_{\pO}(p)}{\eps}\big)=\aleps(p).  \eeqas
    Consequently, the stationary hypersurface $\mathcal
    S_\epsilon\cap\left\{x^{n+2}>L-1\right\}$ is a subsolution to
    \eqref{eps fluss}.
  \item The maximum of two subsolutions is again a subsolution, for
    example in the viscosity sense. Therefore $\graph u$ remains above
    both $\mathcal S_\epsilon$ and the plane
    $\left\{x^{n+2}=L\right\}$ for all times and \eqref{est:pen-depth}
    is valid with $C_0(\ell)=f_0(\ell-1)$.  \qedhere
  \end{enumerate}
\end{proof}

Based on Lemma \ref{lemma:penetration}, we will henceforth assume

\begin{assumption}[Standard assumption on $\eps$]
  \label{assumption:epsilon-standard}
  Given a number $L\in \R$ and an initial surface $M_0$ (disjoint from
  the obstacle) contained in $\{x^{n+2}\geq L\}$, we consider the
  evolution equation \eqref{eps fluss} only for values of $\eps\in
  (0,\eps_0(L))$, the number given by Lemma \ref{lemma:penetration}.
\end{assumption}

As a consequence of Lemma \ref{lemma:penetration} and its proof, we
get the following more general bounds on the penetration depth of
solutions to \eqref{eps fluss}

\begin{corollary}
  \label{cor:K-introd}
  Let $\O$ be an obstacle as in Definition \ref{def: ini data} which
  we furthermore assume to be of class $C^2$ and let $\ell>-\infty$ be
  any number. Then there exist $K<\infty$ and $C_0>0$ such that the
  following holds true.
  \par
  Let $(M_t)_t$ be a smooth solution of \eqref{eps fluss} (with $\eps$
  satisfying the standard assumption) which is initially disjoint from
  the obstacle. Then $\dist_{\partial\O}(p)\geq -C_0\eps$ and
  $$\aleps(p)+\abs{A^\Ob}(p)+v^\Ob(p)\leq K$$
  for any $p\in M_t\cap \Ob\cap \{x^{n+2}\geq \ell\}$ and any $t\ge0$.
\end{corollary}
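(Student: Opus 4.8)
The plan is to show that Corollary \ref{cor:K-introd} follows almost immediately by combining the penetration estimate of Lemma \ref{lemma:penetration} with the geometric estimates on the level sets of $\O$ that appear in the proof of Lemma \ref{lemma:penetration}. First I would apply Lemma \ref{lemma:penetration} directly at height $\ell-1$ (or, more precisely, choose an auxiliary height $\ell'<\ell$ below which we want control). Since $M_0$ is disjoint from $\O$ and contained in some halfspace $\{x^{n+2}\geq L\}$, the standard assumption on $\eps$ guarantees that the hypotheses of Lemma \ref{lemma:penetration} are met; we conclude that $\dist_{\partial\O}(p)\geq -C_0(\ell)\cdot\eps$ for every $p\in M_t\cap\{x^{n+2}\geq\ell\}$ and all $t$, which is precisely the first assertion of the corollary (with $C_0=C_0(\ell)$).

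Next I would bound each of the three quantities $\aleps(p)$, $|A^\O|(p)$, $v^\O(p)$ separately for $p\in M_t\cap\O\cap\{x^{n+2}\geq\ell\}$. For $|A^\O|(p)$: since $p\in\O$ we have $\dist_{\partial\O}(p)\in[-C_0\eps,0)$, so $p$ lies in a thin tubular neighbourhood of $\partial\O$; by the standard formula expressing the principal curvatures of the level sets $\partial\O_\delta$ in terms of those of $\partial\O$ (see \cite[Lemma 14.17]{GT}, as already invoked in the proof of Lemma \ref{lemma:comp-surf-zwei-ff}), and using that $\partial\O$ has locally bounded second fundamental form above any fixed height, we get $|A^\O|(p)\leq 2\sup_{\partial\O\cap\{x^{n+2}\geq\ell-1\}}|A^\O|=:K_1<\infty$ once $\eps$ is small enough (this smallness is absorbed into the standard assumption, shrinking $\eps_0(L)$ if necessary). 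For $\aleps(p)$: by definition $\aleps(p)=\beta(\dist_{\partial\O}(p)/\eps)\leq\beta(-0)=\beta(0)$ is already bounded by $\sup\beta$ — but in fact, since $\dist_{\partial\O}(p)/\eps\geq -C_0$, monotonicity of $\beta$ on $(-\infty,0]$ gives $\aleps(p)\leq\beta(-C_0)$, a bound depending only on $\ell$ and $\beta$. For $v^\O(p)$: since $|A^\O|$ is bounded near $p$ and $\partial\O$ is a $C^{1,1}$ graph with $\psi$ proper and bounded above, the level sets $\partial\O_\delta$ are graphs whose gradient function is controlled locally; so $v^\O(p)\leq K_3$ uniformly on $\{x^{n+2}\geq\ell\}$, again using boundedness of $|A^\O|$ and a tubular neighbourhood argument. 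Setting $K:=\beta(-C_0)+K_1+K_3$ finishes the proof.

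The main technical point — though it is genuinely routine given what is already in the paper — is to be careful that all the smallness requirements on $\eps$ ($\eps<\eps_0(L)$ from Lemma \ref{lemma:penetration}, $\eps<\eps_2$ from the tubular-neighbourhood curvature estimate, and whatever is needed so that a point at depth $C_0\eps$ actually lies in the region where the level-set formulas are valid) can be collected into a single threshold that still depends only on $L$ (and $\ell$, $\O$, $\beta$), not on the individual solution $(M_t)$. This is exactly the kind of bookkeeping performed in the proof of Lemma \ref{lemma:penetration}, so no new idea is required; one simply redefines $\eps_0(L)$ to be the minimum of the relevant thresholds. I do not anticipate any real obstacle here: the corollary is essentially a repackaging of the penetration bound together with the already-established fact that the geometry of $\partial\O$ and its nearby level sets is uniformly controlled above any fixed height.
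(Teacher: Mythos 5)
Your proposal is correct and follows essentially the same route as the paper, which states the corollary as an immediate consequence of Lemma \ref{lemma:penetration} and its proof together with the tubular-neighbourhood control of $\abs{A^\Ob}$ and $v^\Ob$ above a fixed height (via \cite[Lemma 14.17]{GT}, already used in Lemma \ref{lemma:comp-surf-zwei-ff}). Only note that your intermediate claim $\aleps(p)\le\beta(0)$ has the monotonicity backwards (since $\beta'<0$, $\beta$ is unbounded as $s\to-\infty$), but you immediately correct this to the right bound $\aleps(p)\le\beta(-C_0)$.
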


We remark that the above constant $K$ depends only on local
$C^2$-bounds of the obstacle.  In particular, while in Definition
\ref{def: ini data} the assumed regularity of the obstacle is only
$C^{1,1}$ and not $C^2$, we can and will approximate such obstacles by
smooth obstacles with locally bounded $C^{2}$-norm, so Corollary
\ref{cor:K-introd} will still apply with constants depending only on
the local $C^{1,1}$-norms of the original obstacle $\Ob$.
\par
In the following sections, we shall derive a priori estimates for
solutions of \eqref{eps fluss} in such halfspaces $\left\{x^{n+2}\geq
  \ell\right\}$ and for this we shall often use

\begin{assumption}[Assumptions for a priori estimates in $\{x^{n+2}\ge
  \ell\}$] \label{ass:a-priori-h}\neueZeilealt
  We consider solutions $(M_t)_t$ of \eqref{eps fluss} with the
  following properties: For some $a>0$
  \begin{enumerate}[(i)]
  \item each $M_t\cap\{x^{n+2}>\ell-a\}$, $t\ge0$, is a graphical, 
    smooth submanifold without boundary and
  \item each $M_t\cap \{x^{n+2}\ge\ell-a\}$ is compact.
  \end{enumerate}
\end{assumption}

\section{$C^1$-estimates for the graphical flow: gradient
  function} \label{section:C1} 

We combine the evolution equation for
the gradient function given in Lemma \ref{v evol} with the key
observation concerning the additional term $\la\naM\aleps,e_{n+2}\ra$
made in Remark \ref{rem:sign nabla alpha eta} and a localisation
argument to prove

\begin{proposition}
  \label{prop:v-est}
  Let $\ell\in\R$ and let $(M_t)_t$ be a solution of \eqref{eps
    fluss}, with $\epsilon$ as in Assumption
  \ref{assumption:epsilon-standard}, such that Assumption
  \ref{ass:a-priori-h} is satisfied.  Then the gradient function is
  controlled by
  \[(U-\ell)^2\cdot v\le \sup\limits_{M_0\cap\left\{x^{n+2}\ge
      \ell\right\}} (U-\ell)^2\cdot v +C(\ell),\] for all times and in
  all points with height $U\geq \ell$.  Here $C(\ell)$ depends only on
  $\max\limits_{M_0}U-\ell$ and the bounds for $v_{\O}$ and
  $\alpha_\epsilon$ from Corollary \ref{cor:K-introd}.
\end{proposition}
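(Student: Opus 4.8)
The plan is to run a maximum principle for the quantity $W:=(U-\ell)^2 v$ over the part $\{x^{n+2}\ge\ell\}$ of the evolving surface. The weight $(U-\ell)^2$ is chosen so that $W$ vanishes on the slice $\{x^{n+2}=\ell\}$ — which for this argument plays the role of a piece of parabolic boundary — while $W$ stays bounded above because $U\le\max_{M_0}U$; the latter is the analogue from above of the barrier bound in Lemma \ref{lemma:penetration}, obtained by comparison with stationary planes sitting above the obstacle, and it is the only source of the dependence of $C(\ell)$ on $\max_{M_0}U-\ell$.

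Next I would compute the evolution of $W$. Combining Lemma \ref{v evol},
\[\tdt v-\Delta v=-|A|^2v-\tfrac2v|\nabla v|^2+v^2\langle\naM\alpha_\epsilon,e_{n+2}\rangle,\]
with the elementary identities $(\tdt-\Delta)U=\alpha_\epsilon/v$ and $|\nabla U|^2=|P_{TM}e_{n+2}|^2=1-v^{-2}$, and writing $\phi:=(U-\ell)^2$ (so that $\nabla\phi=2(U-\ell)\nabla U$ and $(\tdt-\Delta)\phi=\tfrac{2(U-\ell)\alpha_\epsilon}{v}-2(1-v^{-2})$), the product rule $(\tdt-\Delta)W=\phi(\tdt-\Delta)v+v(\tdt-\Delta)\phi-2\langle\nabla\phi,\nabla v\rangle$ gives, at a point where $W$ attains a spatial maximum with $U>\ell$ (so $\phi>0$ and $\nabla v=-\tfrac v\phi\nabla\phi$), the exact cancellation of the first-order terms
\[-\tfrac{2\phi}{v}|\nabla v|^2-2\langle\nabla\phi,\nabla v\rangle=-\tfrac{2v}{\phi}|\nabla\phi|^2+\tfrac{2v}{\phi}|\nabla\phi|^2=0,\]
so that, after dropping $-\phi|A|^2v\le0$,
\[(\tdt-\Delta)W\le\phi v^2\langle\naM\alpha_\epsilon,e_{n+2}\rangle+2(U-\ell)\alpha_\epsilon-2v+\tfrac2v.\]
If the point lies outside $\Ob$ then $\alpha_\epsilon=0$ and $\naM\alpha_\epsilon=0$; if it lies in $\Ob$ and $v\ge v_\Ob$, then $\langle\naM\alpha_\epsilon,e_{n+2}\rangle\le0$ by Remark \ref{rem:sign nabla alpha eta}; in either case the penalty term is nonpositive, and using $\alpha_\epsilon\le K$ and $v_\Ob\le K$ on $M_t\cap\Ob\cap\{x^{n+2}\ge\ell\}$ from Corollary \ref{cor:K-introd}, together with $U-\ell\le\max_{M_0}U-\ell$ and $v\ge1$, one arrives at $(\tdt-\Delta)W\le 2(\max_{M_0}U-\ell)K+2-2v$. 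The only remaining case, a point of $\Ob$ with $v<v_\Ob$, needs no differential inequality at all, since there $W=\phi v<(\max_{M_0}U-\ell)^2K$ directly.

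Finally I would assemble this into a localised maximum principle on the evolving surface: using the compactness of $M_t\cap\{x^{n+2}\ge\ell-a\}$ from Assumption \ref{ass:a-priori-h}, the spatial supremum of $W$ over $\{x^{n+2}\ge\ell\}$ is attained for each $t$; if at some positive time it exceeds both $\sup_{M_0\cap\{x^{n+2}\ge\ell\}}(U-\ell)^2v$ and $(\max_{M_0}U-\ell)^2K$, then it is attained at an interior space-time point away from the slice $\{x^{n+2}=\ell\}$ and not at $t=0$, where $(\tdt-\Delta)W\ge0$ together with the inequality above forces $v\le(\max_{M_0}U-\ell)K+1$, hence $W\le(\max_{M_0}U-\ell)^2\big((\max_{M_0}U-\ell)K+1\big)$. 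Taking the largest of the three bounds gives the claim with $C(\ell)=(\max_{M_0}U-\ell)^2\max\{K,(\max_{M_0}U-\ell)K+1\}$, which depends only on $\max_{M_0}U-\ell$ and the bounds for $v_\Ob$ and $\alpha_\epsilon$ from Corollary \ref{cor:K-introd}. The step demanding the most care is precisely this localisation: verifying that the moving slice $\{x^{n+2}=\ell\}$ genuinely behaves as parabolic boundary on which $W=0$, that no other boundary of $M_t$ intrudes into $\{x^{n+2}\ge\ell\}$, and that the interior maximum principle is legitimately applied at the remaining extremum — the inside/outside-$\Ob$ and large/small-$v$ case split being the accompanying bookkeeping.
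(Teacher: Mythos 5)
Your proposal is correct and follows essentially the same route as the paper: the same test function $(U-\ell)^2v$, the same product-rule computation with the cancellation of the gradient terms at a spatial maximum, the same use of Remark \ref{rem:sign nabla alpha eta} together with the bounds on $v_\Ob$ and $\alpha_\epsilon$ from Corollary \ref{cor:K-introd}, and the same stationary hyperplane above the obstacle as upper barrier for $U$. The only cosmetic difference is that the paper argues ``if $w$ is large then $v$ is large, hence $v\ge v_\Ob$'', whereas you dispose of the case $v<v_\Ob$ by bounding $W$ directly; these are equivalent.
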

\begin{proof}
 We may assume without loss of generality that $\ell=0$. 
  We want to apply the maximum principle to the function
  \begin{align*}
    w:=&\,U^2v\umbruch \intertext{and obtain by direct computation}
    \dot w-\Delta w=&\,2Uv\big(\dot U-\Delta U\big) +U^2\left(\dot
      v-\Delta v\right) -2v|\nabla U|^2 -4U\langle\nabla U,\nabla
    v\rangle\umbruch\\
    =&\,2Uv\frac{\alpha_\epsilon}v +U^2\left(-|A|^2v-\tfrac2v|\nabla
      v|^2 +v^2\left\langle\naM\alpha_\epsilon,
        {e_{n+2}}\right\rangle\right)\\ &\,-2v|\nabla U|^2
    -4U\langle\nabla U,\nabla v\rangle.
  \end{align*}
  At a spatial maximum of $w$, we obtain
  \begin{align*}
    0=&\,2Uv\nabla U+U^2\nabla v,\umbruch\\
    \dot w-\Delta w=&\,2U\alpha_\epsilon -U^2|A|^2v-\frac2v|\nabla
    v|^2U^2
    +v^2U^2\left\langle\naM\alpha_\epsilon,{e_{n+2}}\right\rangle\\
    &\,-2v|\nabla U|^2 +2U^2\frac1v|\nabla v|^2\umbruch\\
    \le&\,2U\alpha_\epsilon
    +v^2U^2\left\langle\naM\alpha_\epsilon,{e_{n+2}}\right\rangle
    -2v\left(1-\tfrac1{v^2}\right), \intertext{where we have used,
      setting $\eta=e_{n+2}$ and observing $|\eta|=1$, that} |\nabla
    U|^2=&\,\eta_\gamma\nabla_iF^\gamma
    g^{ij}\nabla_jF^\zeta\eta_\zeta
    =\eta_\gamma\left(\delta^{\gamma\zeta}
      -\nu^\gamma\nu^\zeta\right)\eta_\zeta
    =|\eta|^2-\langle\nu,\eta\rangle^2 =1-\tfrac1{v^2}.
  \end{align*}
  If $w$ is large, $v$ is also large since the hyperplane
  $\left\{x^{n+2}=\sup u_0\right\}$ is a stationary solution of the
  flow and hence acts as an upper barrier. In this situation,
  $\left\langle\naM\alpha_\epsilon, {e_{n+2}}\right\rangle\le0$
  according to Remark \ref{rem:sign nabla alpha eta}. The term
  $2U\alpha_\epsilon$ is uniformly bounded and can be absorbed as
  $-2v+2/v\le-v$ for $v\ge2$. Hence the claimed inequality follows
  from the maximum principle as $w$ vanishes at height $\ell$.
\end{proof}

\section{Controlling the second fundamental form}\label{section:A}

In this section we analyse the evolution of the second fundamental
form under the flow \eqref{eps fluss}. According to \eqref{eq:evolA2},
we have 
\begin{align}
  (\tdt-\Delta)\abs{A}^2=&\,-\abs{\na A}^2+2\abs{A}^4-2\aleps
  A^k_iA^i_jA^j_k\nonumber\\
  \label{eq:evolA3}
  &\,-2\bepspp\la \nu_\Ob,\pxi F\ra \cdot \la \nu_\Ob,\pxj F\ra A^{ij}
  -2\bepsp\tilde A_{ij}^\Ob A^{ij} +2\bepsp \la\nu,\nu_\Ob\ra
  \abs{A}^2,
\end{align}
where the first two terms agree with the evolution equation for
standard mean curvature flow.

The additional terms are all supported on the obstacle though with
vastly different behaviour as $\eps\searrow 0$, depending on whether or
not the term contains derivatives of the penalty function $\aleps$.

Namely, as $\aleps$ is bounded uniformly in time in every halfspace
$\left\{x^{n+2}\geq\ell\right\}$, see Section \ref{sect:penetration},
the term $-2\aleps A^k_iA^i_jA^j_k$ will be dominated by $ \abs{A}^4$
in points where the second fundamental form is large and as such will
not play an important role, no matter how small $\eps$ is.

Conversely, all other terms contain derivatives of $\aleps$ and can
thus be of order $\eps^{-1}$ (for first order derivatives as occurring
in the last two terms in \eqref{eq:evolA3}) or even $\eps^{-2}$ (for
the other additional term) in points of the obstacle that can a priori
be reached by the evolving surface, compare Section
\ref{sect:penetration}.

These terms cannot be expected to have a sign so that we need to
construct a modified second fundamental form quantity in order to be
able to apply the maximum principle. 

This construction is done in three steps, first replacing $\abs{A}^2$
with a quantity $f$ whose evolution equation resembles more closely
the one of $\abs{A}^2$ for standard mean curvature flow, then,
similarly to \cite{EckerHuiskenInvent} further modifying this to
obtain a quantity $G$ for which $(\dt-\Delta)G$ is negative for large
values of $G$ and controlled gradient and then finally by localising
in space-time. 

We first prove 
\begin{lemma}\label{lemma:f}
  For any $\eta\in(0,1)$ and $\ell>-\infty$, there exists a constant
  $\gamma_0\in(0,1]$, so that to any $\gamma\in (0,\gamma_0]$, we can
  choose $1\le\bar F=\bar F(\eta,\ell,\gamma)<\infty$, such that the
  following holds true.\par
  Let $(M_t)_t$ be a smooth solution of the flow \eqref{eps fluss}
  (for $\eps$ in the range $(0,\eps_1)$ as discussed in Assumption
  \ref{assumption:epsilon-standard}). Then the inequality
  \beqa \label{est:lemma-f}
  e^{-\gamma\aleps}\left(\tdt-\Delta_M\right)
  \left(e^{\gamma\aleps}\abs{A}^2\right)\leq&\,
  -(2-\eta)\abs{\na A}^2+ (2+\eta)\abs{A}^4\\
  &\,-\abs{\bepsp}\cdot \la \nu,\nu_\Ob\ra_+\abs{A}^2-\frac\gamma4
  \bepspp\abs{A}^2\abs{P_{TM}\nu_\Ob}^2 \eeqa holds in every point
  $p\in M_t\cap \{x^{n+2}\geq \ell\}$ in which
  $$\abs{A}\geq \bar F.$$
\end{lemma}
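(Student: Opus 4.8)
The plan is to compute $e^{-\gamma\aleps}(\tdt-\Delta_M)(e^{\gamma\aleps}\abs{A}^2)$ directly by combining the evolution equation \eqref{eq:evolA3} for $\abs{A}^2$ with the evolution equation \eqref{eq:evol-aleps} for $\aleps$, and then identify which of the resulting error terms are harmless (either bounded, by Corollary \ref{cor:K-introd}, or dominated by $\abs{A}^4$ once $\abs{A}\ge\bar F$ is large) and which must be absorbed by the genuinely good negative term, namely the $-\bepspp\abs{A}^2\abs{P_{TM}\nu_\Ob}^2$ contribution that $\aleps$ supplies. The reason this works is the structural fact that the dangerous $\eps^{-2}$ term in \eqref{eq:evolA3}, namely $-2\bepspp\la\nu_\Ob,\na_iF\ra\la\nu_\Ob,\na_jF\ra A^{ij}$, involves the \emph{projected} normal $P_{TM}\nu_\Ob$ — indeed $\la\nu_\Ob,\na_iF\ra\na_iF/|\na F|$ spans (the tangential part of) $\nu_\Ob$ — so by Cauchy--Schwarz it is bounded by $2\abs{\bepspp}\abs{P_{TM}\nu_\Ob}^2\abs{A}$, and the good term from $\aleps$ scales the same way in $\eps$ and in $\nu_\Ob$.

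Concretely, I would first write
\[
e^{-\gamma\aleps}(\tdt-\Delta_M)(e^{\gamma\aleps}\abs{A}^2)
=(\tdt-\Delta_M)\abs{A}^2+\gamma\abs{A}^2(\tdt-\Delta_M)\aleps
-2\gamma\la\naM\aleps,\naM\abs{A}^2\ra-\gamma^2\abs{\naM\aleps}^2\abs{A}^2,
\]
substitute \eqref{eq:evolA3} and \eqref{eq:evol-aleps}, and collect. The leading terms $-2\abs{\na A}^2+2\abs{A}^4$ appear at once. The cubic term $-2\aleps A^k_iA^i_jA^j_k$ is bounded by $2\aleps\abs{A}^3\le K\abs{A}^3$, which is $\le\tfrac\eta2\abs{A}^4$ once $\abs{A}\ge 2K/\eta=:\bar F_1$. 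The term $-2\bepsp\tilde A^\Ob_{ij}A^{ij}$ is bounded by $C\abs{\bepsp}\cdot|A^\Ob|\cdot\abs{A}$; since $\abs{\bepsp}\le\beps'$ and, crucially, $\abs{\bepsp}\le C\abs{\bepspp}^{1/2}\bigl(\sup|\beta''|\bigr)^{1/2}$ is \emph{not} generally true, I would instead absorb $\abs{\bepsp}|A^\Ob|\abs{A}$ using that $|A^\Ob|$ is bounded on the relevant halfspace (Corollary \ref{cor:K-introd}) together with $\abs{\bepsp}\cdot\abs{A}\le\delta\abs{A}^2+C_\delta(\bepsp)^2$ and then handle $(\bepsp)^2$ against $\bepspp$ via the hypothesis $\beta''$ non-increasing with $\beta''>0$ where $\beta>0$ — a standard estimate giving $(\beta')^2\le C\beta\,\beta''$ or at least $|\beta'|\le C(\beta''\cdot(\text{something bounded}))$. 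The term $+2\bepsp\la\nu,\nu_\Ob\ra\abs{A}^2$ is already of the right form: since $\bepsp\le 0$, writing $\bepsp\la\nu,\nu_\Ob\ra=-\abs{\bepsp}\la\nu,\nu_\Ob\ra$ and retaining the part with $\la\nu,\nu_\Ob\ra_+$ gives $-\abs{\bepsp}\la\nu,\nu_\Ob\ra_+\abs{A}^2$ exactly, with the remaining $\abs{\bepsp}\la\nu,\nu_\Ob\ra_-\abs{A}^2\le\abs{\bepsp}\abs{A}^2$ to be absorbed as above.

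Next I would treat the three genuinely $\aleps$-derived error terms. From $\gamma\abs{A}^2(\tdt-\Delta_M)\aleps$ I get $\gamma\bepsp\aleps\la\nu_\Ob,\nu\ra\abs{A}^2$ (bounded coefficient times $\abs{\bepsp}\abs{A}^2$, absorbed), the good term $-\gamma\bepspp\abs{P_{TM}\nu_\Ob}^2\abs{A}^2$, and $-\gamma\bepsp\tilde A^\Ob_{ij}g^{ij}\abs{A}^2$ (coefficient bounded by $C\abs{\bepsp}|A^\Ob|$, absorbed). The cross term $-2\gamma\la\naM\aleps,\naM\abs{A}^2\ra$ I expand using $\naM\aleps=\bepsp P_{TM}\nu_\Ob$ and $\naM\abs{A}^2=2\la A,\na A\ra$-type expression, then bound by $2\gamma\abs{\bepsp}\abs{P_{TM}\nu_\Ob}\cdot 2\abs{A}\abs{\na A}\le \eta\abs{\na A}^2 + C\gamma^2(\bepsp)^2\abs{P_{TM}\nu_\Ob}^2\abs{A}^2/\eta$; the last factor must then be controlled by the good term $-\tfrac\gamma4\bepspp\abs{P_{TM}\nu_\Ob}^2\abs{A}^2$, i.e.\ I need $C\gamma(\bepsp)^2/\eta\le\tfrac14\bepspp$ on the support of $\beta$, which by the monotonicity of $\beta''$ reduces to choosing $\gamma=\gamma_0(\eta)$ small. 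Similarly $-\gamma^2\abs{\naM\aleps}^2\abs{A}^2=-\gamma^2(\bepsp)^2\abs{P_{TM}\nu_\Ob}^2\abs{A}^2\le 0$ and is simply dropped. The $\eps^{-2}$ term $-2\bepspp\la\nu_\Ob,\na_iF\ra\la\nu_\Ob,\na_jF\ra A^{ij}$, as noted, is $\le 2\abs{\bepspp}\abs{P_{TM}\nu_\Ob}^2\abs{A}$, which since $\bepspp\ge 0$ contributes $\le 2\bepspp\abs{P_{TM}\nu_\Ob}^2\abs{A}\le\tfrac\gamma8\bepspp\abs{P_{TM}\nu_\Ob}^2\abs{A}^2 + C(\gamma)\bepspp\abs{P_{TM}\nu_\Ob}^2$, and the residual bounded-coefficient piece $\bepspp\abs{P_{TM}\nu_\Ob}^2$ — here $\abs{P_{TM}\nu_\Ob}^2$ is bounded, but $\bepspp$ is of size $\eps^{-2}$ and is \emph{not} absorbable into $\abs{A}^4$ — so actually this term must also be killed by a fraction of the good term, which again is what forces $\abs{A}$ large: rewrite $2\bepspp\abs{P_{TM}\nu_\Ob}^2\abs{A}\le\tfrac\gamma4\bepspp\abs{P_{TM}\nu_\Ob}^2\abs{A}^2$ once $\abs{A}\ge 8/\gamma=:\bar F_2$. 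Taking $\bar F=\max(\bar F_1,\bar F_2,1)$ and collecting yields \eqref{est:lemma-f}.

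The main obstacle is bookkeeping the two competing scales in $\eps$: the $\eps^{-2}$ terms (coming from $\bepspp$ and from the squared first-order penalty terms) cannot be absorbed by $\abs{A}^4$ no matter how large $\abs{A}$ is, so they \emph{must} be absorbed by the single available good term $-\tfrac\gamma4\bepspp\abs{A}^2\abs{P_{TM}\nu_\Ob}^2$; making this work simultaneously requires (a) recognizing that every troublesome $\eps^{-2}$ contribution carries a factor $\abs{P_{TM}\nu_\Ob}^2$ (the tangential, not full, normal — this is the real content of \eqref{eq:D2aleps2}), (b) using the hypothesis that $\beta''$ is non-increasing and positive where $\beta>0$ to get $(\bepsp)^2\le C\bepspp\cdot\sup|\beta'|/\inf_{\{\beta>0\}}(-\,\cdot)$-type bounds so the squared-first-order terms are also $\lesssim\bepspp$, and (c) choosing first $\gamma_0=\gamma_0(\eta,\ell)$ small enough to beat all the constants multiplying $\bepspp$, then $\bar F$ large enough to beat the remaining $\abs{A}^3$ and the $\abs{A}^1$ pieces. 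The $\eps$-uniformity of the constant $K$ from Corollary \ref{cor:K-introd} is what makes $\bar F$ and $\gamma_0$ independent of $\eps$.
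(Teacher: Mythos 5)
Your overall strategy coincides with the paper's: conjugate $\abs{A}^2$ by $e^{\gamma\aleps}$, recognise that the only available good $\eps^{-2}$-term is $-\gamma\bepspp\abs{P_{TM}\nu_\Ob}^2\abs{A}^2$ coming from $(\tdt-\Delta)\aleps$, control $(\bepsp)^2$ by $C_1\bepspp$ using the choice of $\beta$ together with the a priori bound on the penetration depth from Corollary \ref{cor:K-introd}, and then fix first $\gamma_0=\gamma_0(\eta,\ell)$ and afterwards $\bar F$. Most of your term-by-term bookkeeping (the cubic term, the $\tilde A^\Ob$ terms, the cross term $\la\naM\aleps,\naM\abs{A}^2\ra$, and the $\bepspp$-term, which indeed carries the factor $\abs{P_{TM}\nu_\Ob}^2$) is correct and parallels the paper's computation.

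There is, however, one genuine gap: the treatment of $2\bepsp\la\nu,\nu_\Ob\ra\abs{A}^2$ in the regime $\la\nu,\nu_\Ob\ra<0$. There you are left with a \emph{positive} contribution $2\abs{\bepsp}\la\nu,\nu_\Ob\ra_-\abs{A}^2$, of size $\eps^{-1}\abs{A}^2$, which you dismiss as ``to be absorbed as above''. But the mechanism you describe (Young's inequality followed by $(\bepsp)^2\le C_1\bepspp$) leaves a term of size $\bepspp\sim\eps^{-2}$ that does \emph{not} carry the factor $\abs{P_{TM}\nu_\Ob}^2$, and so cannot be absorbed into $-\frac\gamma4\bepspp\abs{P_{TM}\nu_\Ob}^2\abs{A}^2$ at points where $P_{TM}\nu_\Ob$ is small; nor can an $\eps^{-1}\abs{A}^2$-term be dominated by $\eta\abs{A}^4$, since $\bar F$ must be chosen independently of $\eps$. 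The missing ingredient --- and the geometric heart of the paper's proof --- is that, both $M_t$ and the level sets of $\partial\Ob$ being graphs with uniformly bounded gradient functions, one has $\la\nu,\nu_\Ob\ra\ge (v\,v_\Ob)^{-1}-\abs{P_{\R^{n+1}}\nu_\Ob}\ge-\sqrt{1-K^{-2}}$, so that $\la\nu,\nu_\Ob\ra<0$ forces $\abs{P_{TM}\nu_\Ob}^2=1-\la\nu,\nu_\Ob\ra^2\ge K^{-2}$, i.e. $\la\nu,\nu_\Ob\ra_-\le K^2\abs{P_{TM}\nu_\Ob}^2$. Only after re-injecting this factor of $\abs{P_{TM}\nu_\Ob}^2$ can the offending $\eps^{-1}$-term be absorbed by the $\eps^{-2}$ good term (again via $(\bepsp)^2\le C_1\bepspp$ and the smallness of $\gamma_0$). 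Without this observation the argument does not close.
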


Recall that $\alpha_\epsilon$ is uniformly bounded in points $p\in
M_t\cap\{x^{n+2}\ge \ell\}$, see Corollary~\ref{cor:K-introd}.  Hence
in points where $e^{\gamma\alpha_\epsilon}|A|^2$ is large, $|A|$ is
also large and the estimate above applies. Therefore inequalities as
in Lemma \ref{lemma:f}, valid only where $|A|$ is large and thus of a
much simpler form than the general evolution equation, are suitable to
derive upper bounds on the second fundamental form.

We remark that while the present lemma makes no use of the
$C^1$-bounds on the evolving hypersurface derived earlier, such bounds
will be crucial in the following lemma.

\begin{lemma}\label{lemma:mod-f}
  For any numbers $M<\infty$ and $\ell>-\infty$, there exist numbers
  $\gamma,k>0$ as well as $\bar F<\infty$, such that the following
  holds. Let $(M_t)_t$ be a smooth solution of \eqref{eps fluss} for
  some $\epsilon\in(0,\epsilon_1)$ as in Assumption
  \ref{assumption:epsilon-standard} and set
  $$G:=h\left(v^2\right)\cdot e^{\gamma\aleps}\cdot
  \abs{A}^2,\quad\text{where}\quad h(y)=y\cdot e^{ky}.$$ Then
  \begin{align*} \left(\dt-\Delta\right) G +\frac1h\langle\nabla
    h,\nabla G\rangle \leq &\,-\frac {k}8 \left[ h
      e^{\gamma\alpha_\epsilon}|\nabla A|^2
      +G|A|^2+ G |\nabla v|^2\right] \\
    &\,-\left[\frac\gamma8\beta_\epsilon''|P_{TM}\nu_{\O}|^2
      +\frac12|\beta_\epsilon'|\langle\nu,\nu_{\O}\rangle_+\right]
    \cdot G
  \end{align*}
  holds in every point $p\in
  M_t\cap\left\{x^{n+2}\geq \ell\right\}$, where $\abs{A}$ is large
  and the gradient function $v$ of $M_t$ is bounded, namely
  $$\abs{A(p)}\geq \bar F, \text{ while } v(p)\leq M.$$ 
\end{lemma}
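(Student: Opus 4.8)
The plan is to start from the evolution inequality for $e^{\gamma\aleps}\abs{A}^2$ provided by Lemma \ref{lemma:f} and then to track the effect of multiplying by $h(v^2)$, where $h(y)=ye^{ky}$. Writing $f:=e^{\gamma\aleps}\abs{A}^2$ and $G:=h(v^2)f$, I would first record the product rule:
\[
(\dt-\Delta)G = h\cdot(\dt-\Delta)f + f\cdot(\dt-\Delta)h(v^2) - 2\langle\nabla h(v^2),\nabla f\rangle,
\]
and then rewrite the cross term using $\nabla G = h\nabla f + f\nabla h$, so that
\[
-2\langle\nabla h,\nabla f\rangle = -\frac2h\langle\nabla h,\nabla G\rangle + \frac{2f}{h}\abs{\nabla h}^2.
\]
This is why the statement is phrased with the extra $\frac1h\langle\nabla h,\nabla G\rangle$ on the left: after moving it across, the genuinely dangerous first-order cross term becomes $-\frac1h\langle\nabla h,\nabla G\rangle$ plus a term $\frac{2f}{h}\abs{\nabla h}^2$ which is quadratic in $\nabla v$ and must be absorbed by a good term. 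So the next step is to compute $(\dt-\Delta)h(v^2)$ from Lemma \ref{v evol}. Using $h'(y)=(1+ky)e^{ky}$ and $h''(y)=(2k+k^2y)e^{ky}$ and the chain rule, $(\dt-\Delta)h(v^2) = h'(v^2)(\dt-\Delta)v^2 - h''(v^2)\abs{\nabla(v^2)}^2$, and $(\dt-\Delta)v^2 = 2v(\dt-\Delta)v - 2\abs{\nabla v}^2$; plugging in \eqref{eq v evol} gives a manageable expression whose leading contribution, for $k$ large, is a strongly negative multiple of $v^2\abs{A}^2$ and of $\abs{\nabla v}^2$ coming from the $h''$ term and from the $-\abs{A}^2v$ term in \eqref{eq v evol}.

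The heart of the argument is then a bookkeeping of signs. The $h\cdot(\dt-\Delta)f$ contribution carries, from Lemma \ref{lemma:f}, the terms $h\cdot[-(2-\eta)\abs{\nabla A}^2 + (2+\eta)\abs{A}^4]$ together with the two good obstacle terms $-h\abs{\bepsp}\langle\nu,\nu_\Ob\rangle_+\abs{A}^2$ and $-\frac\gamma4 h\bepspp\abs{A}^2\abs{P_{TM}\nu_\Ob}^2$; after multiplying by $h(v^2)\geq v^2$ these last two immediately dominate the corresponding terms $-\frac12\abs{\bepsp}\langle\nu,\nu_\Ob\rangle_+G$ and $-\frac\gamma8\bepspp\abs{P_{TM}\nu_\Ob}^2 G$ claimed in the conclusion (with room to spare, which one can spend elsewhere). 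The dangerous positive term is $h(2+\eta)\abs{A}^4 = (2+\eta)v^{-2}G\abs{A}^2$, and it must be beaten by the negative $v^2\abs{A}^2$-type term coming from $f\cdot(\dt-\Delta)h(v^2)$, which is of the form $-c\,k\,h(v^2)\abs{A}^4$ up to lower-order terms; since $v\geq 1$ always, choosing $k$ large (depending on $\eta$, but $\eta$ we are free to fix small) makes $-ck\abs{A}^4 + (2+\eta)\abs{A}^4 \leq -\frac k8 v^{-2}\abs{A}^2\cdot\frac{G}{\cdots}$, i.e. produces the $-\frac k8(G\abs{A}^2 + \ldots)$ on the right. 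The gradient terms $-h(2-\eta)\abs{\nabla A}^2$ and the negative $\abs{\nabla v}^2$ terms from $h''$ have to absorb three things: the Kato-type positive cross contributions relating $\abs{\nabla v}$ and $\abs{\nabla A}$ (recall from \eqref{eq:deriv-v} that $\abs{\nabla v}\leq v^2\abs{A}\abs{\nabla U}\leq v^2\abs{A}$, so $\abs{\nabla v}^2\lesssim v^4\abs{A}^2$, controlled once $v\leq M$), the leftover $\frac{2f}{h}\abs{\nabla h}^2$ term, and the first-order junk generated when one commutes $h'(v^2)\nabla(v^2)$ past everything. This is the step I expect to be the main obstacle: it requires Young's inequality applied carefully so that the coefficients end up as stated ($\frac k8$ for $h e^{\gamma\aleps}\abs{\nabla A}^2$, $G|\nabla v|^2$, and $G|A|^2$) and it crucially uses the hypothesis $v\leq M$ from Assumption \ref{ass:a-priori-h} / Proposition \ref{prop:v-est} — without the $C^1$-bound the bad term $\abs{A}^4/v^2$ could not be controlled, which is exactly the remark preceding the lemma.

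To organize the write-up I would: (1) fix $\eta$ small (say $\eta=\tfrac1{10}$) and take $\gamma\leq\gamma_0(\eta,\ell)$ from Lemma \ref{lemma:f}; (2) expand $(\dt-\Delta)G+\frac1h\langle\nabla h,\nabla G\rangle$ into the three groups above, keeping all $\abs{\nabla A}^2$, $\abs{\nabla v}^2$, $\abs{A}^4$ and obstacle terms explicit and dumping everything else into terms bounded by $C(1+\abs{A}^2+\abs{\nabla v}^2)$ using $\aleps, v_\Ob, v\leq M$ and Corollary \ref{cor:K-introd}; (3) use \eqref{eq:deriv-v} to bound $\abs{\nabla v}^2$ by $C\abs{A}^2$ on the region $v\leq M$ and apply Young to trade the mixed $\abs{\nabla A}\abs{\nabla v}$ terms; (4) choose $k=k(M,\eta)$ large enough that the $h''$-generated term $-ckv^2 h\abs{A}^4$ dominates $(2+\eta)h\abs{A}^4$ plus all lower-order positive $\abs{A}^4$ and $\abs{A}^2$ contributions, leaving the asserted $-\frac k8[\,h e^{\gamma\aleps}\abs{\nabla A}^2 + G\abs{A}^2 + G\abs{\nabla v}^2\,]$; and finally (5) choose $\bar F=\bar F(M,\ell)$ large enough that in the region $\abs{A}\geq\bar F$ the additive constants from step (2) are swallowed by $\frac k8 G\abs{A}^2$. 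The obstacle terms survive untouched (with the factor $h(v^2)\geq 1$) and give the last bracket in the conclusion.
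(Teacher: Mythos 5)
Your overall architecture (product rule for $G=h(v^2)f$ with $f=e^{\gamma\aleps}\abs{A}^2$, feeding in Lemma \ref{lemma:f} and Lemma \ref{v evol}, letting the multiplied obstacle terms survive to the conclusion) matches the paper, but the quantitative heart of your plan is wrong in a way that would make the proof fail. First, the good $-\abs{A}^4$ term does \emph{not} come from the $h''$ contribution: $h''$ only ever multiplies $\abs{\na(v^2)}^2$, so it produces $\abs{\na v}^2\abs{A}^2$ terms, not $\abs{A}^4$ terms. The term that beats $(2+\eta)h\abs{A}^4$ is $f\cdot 2vh'(v^2)\cdot(-\abs{A}^2v)=-2h'(v^2)v^2\,e^{\gamma\aleps}\abs{A}^4$, and with $h(y)=ye^{ky}$ the margin is exactly $2h'(y)y-(2+\eta)h(y)=(2ky-\eta)h(y)$. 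This forces $\eta<2k$, so $\eta$ must be chosen \emph{after} $k$ (the paper takes $\eta=k/2$); your plan of fixing $\eta=\tfrac1{10}$ first and then choosing $k$ already inverts the logical order. Worse, $k$ must be \emph{small}, not large: the coefficient of the $\abs{\na v}^2\abs{A}^2$ terms is, up to the $\eta$-corrections, $4h''(y)y+6h'(y)-\bigl(4+\tfrac2{1-\eta}\bigr)\tfrac{y(h'(y))^2}{h(y)}=\bigl(2ky-2k^2y^2+O(\eta)\bigr)e^{ky}$, which is positive only when $ky<1$, i.e.\ $k<v^{-2}$. The paper's choice is $k=(24M^2)^{-1}$. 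With $k$ large (or with $\eta$ fixed independently of $M$ and hence $k\gtrsim\eta$ while $kM^2<1$ is also required), this coefficient has the wrong sign and the resulting positive $\abs{\na v}^2\abs{A}^2\le M^4\abs{A}^4$ contribution, of size $\sim k^2M^2 h\abs{A}^4$, cannot be absorbed by the good term $\sim k h\abs{A}^4$ unless $k$ is again small.

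A second, independent problem is your symmetric treatment of the cross term: converting \emph{both} copies of $-\la\na h,\na f\ra$ into $\na G$-form leaves $-\tfrac2h\la\na h,\na G\ra+\tfrac{2f}{h}\abs{\na h}^2$, so after moving one copy to the left you still have an uncontrolled first-order term $-\tfrac1h\la\na h,\na G\ra$ on the right (the statement allows no $\na G$ on the right-hand side), and the remainder $\tfrac{2f}{h}\abs{\na h}^2=8\tfrac{(h')^2}{h}v^2\abs{\na v}^2f$ carries a coefficient $8\tfrac{(h')^2y}{h}$ that exceeds the available $4h''y+6h'$ for \emph{every} $k>0$ (at $ky=0$ the balance is $6-8=-2<0$). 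The paper instead treats the two copies asymmetrically: one copy is rewritten via $\na G$ (cost $4\tfrac{(h')^2y}{h}$), the other is estimated by Kato and Young against the surplus $-(2-\eta)h\abs{\na A}^2$ from Lemma \ref{lemma:f} (cost $\tfrac{2}{1-\eta}\tfrac{(h')^2y}{h}$ plus a small multiple of $h\abs{\na A}^2$), bringing the total down to $4+\tfrac2{1-\eta}$, which is exactly what makes the $\abs{\na v}^2$ coefficient positive for $ky\le\tfrac1{24}$. Finally, you never revisit $\gamma$ after taking $\gamma\le\gamma_0$; an additional smallness condition on $\gamma$ (of the form $2\gamma(1+\eta^{-1})C_1\le\tfrac18$, using $(\bepsp)^2\le C_1\bepspp$) is needed so that the positive obstacle contributions $\gamma^2\tfrac{(h')^2}{h}v^2\abs{\bepsp}^2\abs{P_{TM}\nu_\Ob}^2\abs{A}^2$ generated by $\la\naM\aleps,e_{n+2}\ra$ and by the cross term are dominated by $\tfrac\gamma8\bepspp\abs{P_{TM}\nu_\Ob}^2G$.
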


\begin{proof}[Proof of Lemma \ref{lemma:f}]
  Let $\eta>0$ and $\ell>-\infty$ be given. Let $K$ be as in Corollary
  \ref{cor:K-introd} and let $(M_t)_t$ be a solution of the flow
  \eqref{eps fluss} for some number $\eps\in (0,\eps_1)$ as in
  Assumption \ref{assumption:epsilon-standard}.  Then for $\gamma$ in
  a range $(0,\gamma_0)$ to be determined later, we set
  $$f=f_\gamma=e^{\gamma \aleps}\abs{A}^2$$ and compute, using
  \eqref{eq:evolA2} and \eqref{eq:evol-aleps}, 
  \begin{align*} 
    e^{-\gamma \aleps}\big[ \left(\tdt-\Delta\right)f\big]
    =&\,\left(\tdt-\Delta\right)\abs{A}^2-2\gamma\big\la \naM\aleps,
    \na
    \abs{A}^2\big\ra\\
    &\,+\gamma\abs{A}^2\cdot \left(\tdt-\Delta\right)\aleps
    -\gamma^2\abs{\naM\aleps}^2\abs{A}^2\\
    =&\,-2\abs{\na A}^2+2\abs{A}^4-2\aleps A^k_iA^i_jA^j_k\\
    &\,-\bepspp\la \nu_\Ob,\pxi F\ra \cdot \la \nu_\Ob,\pxj F\ra
    A^{ij}
    -2\bepsp\tilde A_{ij}^\Ob A^{ij}\\
    &\,+2\bepsp \la\nu,\nu_\Ob\ra \abs{A}^2
    -2\gamma\bepsp \big\la P_{TM}\nu_\Ob,\na \abs{A}^2\big\ra\\
    &\,+\gamma\big[\bepsp\la
    \nu_\Ob,\aleps\nu\ra-\bepspp\abs{P_{TM}\nu_\Ob}^2-\bepsp\tilde
    A_{ij}^\Ob g^{ij}\big] \abs{A}^2\\
    &\,-\gamma^2\abs{\bepsp}^2\abs{P_{TM}\nu_\Ob}^2\abs{A}^2.  
  \end{align*}  
  Dropping the last, obviously non-positive term and using Young's
  inequality as well as Kato's inequality $\abs{\na \abs{A}}\leq
  \abs{\na A}$, we obtain \beqa \label{est:f0}
  e^{-\gamma\aleps}\left[\left(\tdt-\Delta\right)f\right] \leq&\,
  -(2-\eta)\abs{\na A}^2+\left(2+\frac{\eta}{2}\right) \abs{A}^4
  +\frac2{\eta}\abs{A}^2 \aleps^2
  \\
  &\,+C\abs{\bepsp}\cdot \abs{A^\Ob} \cdot \big(\gamma \abs{A}^2
  +\abs{A}\big)\\
  &\,-\abs{P_{TM}\nu_\Ob}^2\cdot \left[ \bepspp\big(\gamma
    \abs{A}^2-\abs{A}\big)
    -\frac{4\gamma^2}{\eta}\abs{\bepsp}^2\abs{A}^2\right]\\
  &\,-\la\nu,\nu_\Ob\ra (2+\gamma\aleps)\abs{\bepsp}\abs{A}^2.\\
  \eeqa
  
  To rewrite this expression in the form
  \beqa \label{est:f1}
  e^{-\gamma\aleps}\big[\left(\tdt-\Delta\right)f\big] \leq
  &\, -(2-\eta)\abs{\na
    A}^2+\left(2+\frac{\eta}{2}\right) \abs{A}^4
  +\frac2{\eta}\abs{A}^2 \aleps^2
  \\
  &\,-\abs{P_{TM}\nu_\Ob}^2\cdot T_1 -\la \nu,\nu_\Ob\ra \cdot T_2,
  \eeqa 
  we then use that
$$1=\abs{\nu_\Ob}^2=\abs{P_{TM}\nu_\Ob}^2+\la \nu,\nu_\Ob\ra^2$$
to split the term on the second line of \eqref{est:f0} into suitable
multiples of $\abs{P_{TM}\nu_\Ob}^2$ and of $ \la \nu,\nu_\Ob\ra$ and
find that \eqref{est:f1} is valid for 
\begin{align*}
  T_1:=&\,\left[\gamma
    \bepspp-\frac{4\gamma^2}\eta\abs{\bepsp}^2-C\gamma\abs{\bepsp}
    \abs{A^\Ob}\right]\cdot \abs{A}^2
  -\left[C\abs{\bepsp}\abs{A^\Ob}+\bepspp \right]\cdot \abs{A}\\
  \geq&\,\gamma\cdot\left[\bepspp
    -\gamma(4\eta^{-1}+1)\abs{\bepsp}^2\right]
  \abs{A}^2-\left[\bepspp+\abs{\bepsp}^2\right] \cdot \abs{A}\\ 
  &\,-C\left\lvert A^\Ob\right\rvert^2(\abs{A}^2+1)
  \intertext{and} 
  T_2=&\,(2+\gamma\aleps)\abs{\bepsp}\abs{A}^2- C\la
  \nu,\nu_\Ob\ra \abs{\bepsp}\cdot \abs{ A^\Ob} \cdot \left(\gamma
    \abs{A}^2 +\abs{A}\right),
\end{align*}
 $C=C(n)$ some universal
constants.

We will first show that the dominating term in $T_1$ is given by
$\gamma \bepspp\abs{A}^2>0$, so that we obtain a negative contribution
to the right-hand side of \eqref{est:f1} scaling as $\epsilon^{-2}$ in
points where $P_{TM}\nu_\Ob$ is non-zero, i.\,e.{} in points where the
tangent plane of the evolving hypersurface and the obstacle do
\textit{not} coincide.

Conversely, as both the obstacle and the evolving hypersurface are
graphical, it is precisely in points where the two tangent planes
coincide that $\la \nu,\nu_\Ob\ra$ is maximal, i.e. equal to one, so,
as we shall see, we again get a large negative contribution to the
right hand side of \eqref{est:f1} now coming from the dominating term
$2\abs{\bepsp}\abs{A}^2$ of $T_2$.

To begin with we show\\
\textit{Claim:} Given any $\eta>0$ there exists
$\gamma_0>0$ such that for any $\gamma\in(0,\gamma_0)$ there is a
number $\bar F$ such that
$$T_1\geq \left(\frac{\gamma}2\bepspp-C\cdot K^2\right)\cdot \abs{A}^2$$
in every point $p\in M_t\cap \left\{x^{n+2}\geq\ell\right\}$ in which
$\abs{A}\geq \bar F$. Here $C$ is a universal constant and $K=K(\ell)$
is the number given in Corollary \ref{cor:K-introd}.

To prove this claim, we first recall from Corollary \ref{cor:K-introd}
that $\dist_{\pO}(p)\geq -c_0\cdot \eps$, $c_0=c_0(\ell)$.  Thus
$\beps$ and its derivatives need to be evaluated only for arguments
contained in an interval $[-c_0 \eps, \infty)$ where \beq
\frac{(\bepsp)^2}{\bepspp}\leq
\sup_{[-c_0,0]}\frac{(\beta')^2}{\beta''}\leq
C_1\label{est:der-beta}\eeq is bounded by a constant depending only on
$c_0$ (and thus $\ell$) and the function $\beta$, which we had chosen
so that $\beta'''\le0$.

In points where $\abs{A}$ is large, $\abs{A}\geq \bar F$ for $\bar
F\geq 1$ still to be determined, we thus get
\beqa \label{est:T1-hilfs} T_1 &\geq \gamma\bepspp\cdot \bigg[1-\gamma
C_{1}\left(4\eta^{-1}+1\right)-\left(\gamma\bar
  F\right)^{-1}(1+C_1)\bigg]\abs{A}^{2} -C\abs{A^\Ob}^2\abs{A}^2.
\eeqa Choosing $\gamma_0\in(0,1)$ small enough so that $\gamma_0
C_1(4\eta^{-1}+1)\leq \frac14$, and then, for each
$\gamma\in(0,\gamma_0)$, selecting a number $\bar F$ large enough so
that $\left(\gamma\bar F\right)^{-1} (1+C_1)\le\frac14$, we thus find
as claimed that \beq \label{est:T1} T_1\geq \frac{\gamma}2 \bepspp
\abs{A}^2-C\cdot K^2\abs{A}^2 \eeq where we use Corollary
\ref{cor:K-introd} to deal with the last term in \eqref{est:T1-hilfs}.

To analyse $T_2$, we first observe that 
\begin{align*}
  \left|T_2-2\left|\beta_\epsilon'\right||A|^2\right|\le&\,
  \gamma\alpha_\epsilon\left|\beta_\epsilon'\right||A|^2
  +C\left|\beta_\epsilon'\right|\cdot\left|A^\O\right|
  \cdot|A|^2\cdot\left(\gamma+|A|^{-1}\right)\umbruch\\
  \le&\,\gamma_0 K \left|\beta_\epsilon'\right||A|^2 +C\gamma_0 K
  \left|\beta_\epsilon'\right||A|^2 +C\bar
  F^{-1}K\left|\beta_\epsilon'\right||A|^2\umbruch\\
  \le&\,CK\left(\gamma_0+\bar
    F^{-1}\right)\left|\beta_\epsilon'\right||A|^2,
\end{align*}
since we only need to consider points with $|A|\ge\bar F$. 
After possibly reducing $\gamma_0$ and increasing $\bar F$, we thus
obtain \beq
3\abs{\bepsp}\abs{A}^2\geq T_2\geq
\abs{\bepsp}\abs{A}^2.\label{est:T2}\eeq Remark that these expressions
only scale as $\eps^{-1}$ and not as $\eps^{-2}$ like the leading
order term of $T_1$.

This difference is crucial since we cannot expect to control the sign
of $\la \nu,\nu_\Ob\ra$ and will thus need to rely on the contribution
of $T_1$ to \eqref{est:f1} in points where this inner product is
negative.  While not necessarily positive, we observe that since both
the obstacle and the evolving hypersurface are graphical, this inner
product is bounded away from $-1$. Namely writing $\nu_\Ob=
\la\nu_\Ob,e_{n+2}\ra e_{n+2}+P_{\R^{n+1}}\nu_\Ob$, where
$P_{\R^{n+1}}\nu_\Ob$ is the orthogonal projection of $\nu_\Ob$ onto
$\R^{n+1}\times \{0\}$, we find 
\begin{align*} \la \nu_\Ob,\nu\ra=&\,\la \la \nu_\Ob,e_{n+2}\ra
  e_{n+2}+P_{\R^{n+1}}\nu_\Ob, \nu\ra=\la \nu_\Ob,e_{n+2}\ra \cdot
  \la\nu,e_{n+2}\ra +\la P_{\R^{n+1}}\nu_\Ob,\nu\ra \\
  \geq&\,(v\cdot v_{\Ob})^{-1} -\abs{P_{\R^{n+1}}\nu_\Ob}\geq
  -\abs{P_{\R^{n+1}}\nu_\Ob}
  =-\sqrt{1-\la\nu_\Ob,e_{n+2}\ra^2}\\
  \geq&\, -\sqrt{1-K^{-2}},
\end{align*}
with the last inequality due to Corollary \ref{cor:K-introd}.

In points where $\la \nu,\nu_\Ob\ra<0$, we may thus bound 
$$\abs{P_{TM} \nu_\Ob}^2=1-\abs{\la\nu,\nu_\Ob\ra}^2 \geq K^{-2},$$
which in turn gives \beq \label{est:inner-prod-nu} \la\nu_\Ob,\nu\ra
=\la\nu_\Ob,\nu\ra_+-\la\nu_\Ob,\nu\ra_-\geq \la\nu_\Ob,\nu\ra_+
-K^2\abs{P_{TM}\nu_\Ob}^2.\eeq

Considering points $p\in M_t\cap \left\{x^{n+2}\geq \ell\right\}$ with
$\abs{A}\geq \bar F$, we can thus conclude from \eqref{est:T1} and
\eqref{est:T2} that the estimate \beqa \label{est:T1-T2}
\abs{P_{TM}\nu_\Ob}^2T_1+\la \nu,\nu_\O\ra T_2 \geq
|P_{TM}\nu_{\O}|^2\cdot\left(\frac\gamma4\bepspp-C\right)\abs{A}^2+\la
\nu,\nu_\Ob\ra_+\abs{\bepsp}\abs{A}^2 \eeqa holds with a
constant $C=C(K)$, at least if $\la \nu,\nu_\Ob\ra\geq 0$. On the
other hand, if $\la \nu,\nu_\Ob\ra< 0$, we can combine \eqref{est:T1}
and \eqref{est:T2} with \eqref{est:inner-prod-nu} to conclude that  
\begin{align*}
  \abs{P_{TM}\nu_\Ob}^2T_1+\la \nu,\nu_\O,\ra T_2 \geq&\,
  \abs{P_{TM}\nu_\Ob}^2 \cdot
  \left(\frac{\gamma}{2}\bepspp-3\abs{\bepsp}K^2
  \right)\cdot \abs{A}^2-C\cdot\abs{A}^2\\
  \geq&\, |P_{TM}\nu_{\O}|^2\cdot
  \left(\frac\gamma2\bepspp-\gamma^2\abs{\bepsp}^2\right)\abs{A}^2
  -C\left(1+\gamma^{-2}\right)\abs{A}^2\\
  \geq&\, |P_{TM}\nu_{\O}|^2\cdot
  \left(\frac\gamma2\bepspp-\gamma^2C_1\bepspp\right)
  \abs{A}^2-C(\gamma,K)\abs{A}^2\\
  \geq&\, |P_{TM}\nu_{\O}|^2\cdot\left[
    \frac{\gamma}{4}\bepspp-C(\gamma,K)\right] \abs{A}^2
\end{align*}
since $\gamma_0 C_1\leq \frac14$. But in this second case
$\la\nu,\nu_\Ob\ra_+$ is zero which means that \eqref{est:T1-T2} also
holds though now with a constant $C=C(\gamma,K)$. Inserting
\eqref{est:T1-T2} into \eqref{est:f1} thus gives
  \begin{align*} 
    e^{-\gamma\aleps}\left(\tdt-\Delta_M\right)
    \big(e^{\gamma\aleps}\abs{A}^2\big) \leq&\,-(2-\eta)\abs{\na A}^2+
    \left(2+\frac\eta2\right)\abs{A}^4
    +C_2\cdot\abs{A}^2\\
    &\,-\abs{\bepsp}\cdot \la \nu,\nu_\Ob\ra_+\abs{A}^2 -\frac\gamma4
    \bepspp\abs{A}^2\abs{P_{TM}\nu_\Ob}^2
  \end{align*}
  for a constant $C_2$ depending on $\eta$, $\gamma$ as well as
  $K$. Possibly further increasing $\bar F$ (which is allowed to
  depend on all these quantities), we can however assume that $C_2\leq
  \frac\eta2 (\bar F)^2$, so that we can estimate the final term on
  the first line by $\frac\eta 2 \abs{A}^4$ in the points under
  consideration, thus obtaining the claim of the lemma.
\end{proof} 

\begin{proof} [Proof of Lemma \ref{lemma:mod-f}]
  Given a number $M$ and a level $\ell>-\infty$, we let $K$ be as in
  Corollary \ref{cor:K-introd} and consider a smooth solution
  $(M_t)_t$ of the flow \eqref{eps fluss}, $\eps\in (0,\eps_1)$ as in
  Assumption \ref{assumption:epsilon-standard}, in points where $M\ge
  v$. For a number $\eta=\eta(M,\ell)>0$ to be determined below, we
  let $\gamma_0=\gamma_0(\eta,\ell)>0$ be as in Lemma \ref{lemma:f}.

  We then consider the function 
  $$G=h\left(v^2\right)\cdot f,$$
  where $f=e^{\gamma\aleps}\abs{A}^2$ is as in Lemma \ref{lemma:f},
  with $\gamma\in(0,\gamma_0)$ still to be determined, and $h:\R^+\to
  \R^+$ a nondecreasing function which we will later choose as stated
  in the lemma.

  To begin with, we calculate \beqa\label{est:G-0}
  \left(\tdt-\Delta\right)G=&\, h\cdot \left(\tdt-\Delta\right)f+f
  \cdot 2v\cdot h'\cdot \left(\tdt-\Delta\right)v
  \\
  &\,-\left[h'' \cdot \abs{\na\left(v^2\right)}^2+2h'\cdot \abs{\na
      v}^2\right]\cdot f-2 \left\la\na \left(h\left(v^2\right)\right)
    ,\na f\right\ra. \eeqa Here and in the following, $h$ and its
  derivatives are evaluated at $y=v^2$ unless stated otherwise.

  Let now $p\in M_t\cap \left\{x^{n+2}\geq \ell\right\}$ be a point
  where $\abs{A}\geq \bar F$, the number given by Lemma \ref{lemma:f}.
  Inserting the evolution equation \eqref{eq v evol} of the gradient
  function as well as the estimate \eqref{est:lemma-f} into
  \eqref{est:G-0}, we obtain
  \begin{align}
  e^{-\gamma\aleps}\left(\tdt-\Delta\right)G
  \leq&\,
  h\cdot \bigg[-(2-\eta)\abs{\na A}^2+ (2+\eta)\abs{A}^4\nonumber\\
  &\,\qquad -\abs{\bepsp}\cdot \la \nu,\nu_\Ob\ra_+\cdot
  \abs{A}^2-\frac\gamma4
  \bepspp\abs{A}^2\abs{P_{TM}\nu_\Ob}^2\bigg]\nonumber\\ 
  \label{est:g-1} &\,+\abs{A}^2\cdot 2v\cdot h'\cdot \bigg[-\abs{A}^2v
  -\frac2v\abs{\na
    v}^2+v^2\left\la \naM\aleps,e_{n+2}\right\ra\bigg]\\
  &\,-\left(4h''v^2\abs{\na v}^2+2h'\cdot \abs{\na v}^2\right)\cdot
  \abs{A}^2\nonumber\\ &\,- 2e^{-\gamma\aleps}\left\la \na
    \left(h\left(v^2\right)\right),\na f\right\ra .\nonumber  
  \end{align}

We estimate the last term on the third line using Young's inequality as
\begin{align*} 
  \abs{A}^2\cdot 2v\cdot h'\cdot v^2\la \naM\aleps,e_{n+2}\ra
  =&\,2v^3\abs{A}^2h'\bepsp\la P_{TM}\nu_\Ob, e_{n+2}\ra\\
  \leq&\, \gamma^2\frac{(h')^2}{h}\cdot
  v^2\abs{\bepsp}^2\abs{P_{TM}\nu_\Ob}^2|A|^2\\&\,+\gamma^{-2}v^4h\cdot
  \abs{A}^2\\
  \leq&\, \gamma^2\frac{(h')^2}{h}\cdot
  v^2\abs{\bepsp}^2\abs{P_{TM}\nu_\Ob}^2|A|^2\\
  &\,+\gamma^{-2}M^4\cdot e^{-\gamma\aleps}\cdot G.
\end{align*}
Then, as in \cite{EckerHuiskenInvent}, we deal with the last term in
\eqref{est:g-1} by writing one multiple of $e^{-\gamma\aleps}\la \na
(h(v^2)),\na f\ra $ in terms of $G=h\cdot f$ as 
\begin{align*}
  -e^{-\gamma\aleps} \left\la \na\left(h\left(v^2\right)\right), \na
    f\right\ra =&\,-\frac{e^{-\gamma\aleps}} h\cdot \left\la \na
    \left(h\left(v^2\right)\right), \na G\right\ra
  +\frac{e^{-\gamma\aleps}}{h}
  \abs{\na\left(h\left(v^2\right)\right)}^2\cdot f\umbruch\\
  =&\,-\frac{e^{-\gamma\aleps}} h\cdot \left\la \na
    \left(h\left(v^2\right)\right), \na G\right\ra
  +4\frac{(h')^2}{h}v^2\abs{\na v}^2\abs{A}^2
\end{align*}
while rewriting the remaining multiple as
$$e^{-\gamma\aleps}\left\la \na \left(h\left(v^2\right)\right), \na f
\right\ra =\left\la \na \left(h\left(v^2\right)\right),
  \na\left(\abs{A}^2\right)\right\ra+\gamma \abs{A}^2\left\langle \na
  \left(h\left(v^2\right)\right), \naM \aleps\right\rangle $$ and
consequently estimating it, using Kato's and Young's inequality as
well as \eqref{eq:Daleps}, by 
\begin{align}
  \left|e^{-\gamma\aleps}\left\la \na
      \left(h\left(v^2\right)\right),\na f\right\ra\right| \leq&\, 4
  h' v \abs{\na v}\cdot\abs{A}\abs{\na A}+2\gamma h' \cdot v\cdot
  \abs{A}^2\cdot \bepsp\la \na v,P_{TM}\nu_\Ob\ra \nonumber\\
  \leq&\,(2-2\eta)\abs{\na A}^2\cdot h+\frac{4}{2-2\eta}\frac{(h'
    )^2}{h}\abs{\na v}^2v^2\abs{A}^2 \label{est:g-3}\\
  &\,+\eta h\abs{\na v}^2\abs{A}^2
  +\frac{\gamma^2}{\eta}\abs{\bepsp}^2\abs{P_{TM}\nu_\Ob}^2v^2\cdot
  \frac{(h')^2}{h}\cdot\abs{A}^2.\nonumber
\end{align}

Combining \eqref{est:g-1}-\eqref{est:g-3} we thus find that
\begin{align*} 
  \left(\tdt-\Delta\right)G\leq&\, -T_3\left(v^2\right)\cdot
  e^{\gamma\aleps}\abs{A}^4-T_4\left(v^2\right)\cdot
  e^{\gamma\aleps}\abs{A}^2\abs{\na v}^2-T_5^{(\eps)}\left(v^2\right)
  \cdot G\\
  &\,-\eta e^{\gamma\aleps} h\cdot \abs{\na A}^2
  -\frac1{h(v^2)}\left\la\na\left(h\left(v^2\right) \right), \na G
  \right\ra+M^4\gamma^{-2}G,
\end{align*}
where 
\begin{align}\label{eq:T3-T5}
  T_3(y)&:=2h'(y)\cdot y-(2+\eta)h(y),\\
  T_4(y)&:=4h''(y)\cdot y-\left(4+\frac{2}{1-\eta}\right)\cdot
  \frac{y\cdot (h'(y))^2}{h(y)}+6 h'(y)-\eta\cdot h,\nonumber
\end{align}
and
\[T_5^{(\eps)} =\abs{\bepsp}\la \nu,\nu_\Ob\ra_+ +
\abs{P_{TM}\nu_\Ob}^2
\left(\frac\gamma4\bepspp-\gamma^2(1+\eta^{-1})\abs{\bepsp}^2\cdot
  \frac{\left(h'\left(v^2\right)\right)^2 v^2
  }{h^2\left(v^2\right)}\right)\] need all be evaluated at $y=v^2$,
and thus, by assumption, for arguments in the interval $[1,M^2]$.

We will show that all the above terms are strictly positive for
$h(y)=y\cdot e^{ky}$ provided $k$, $\eta$ and $\gamma$ are chosen
suitably (depending on the given numbers $M$ and $ \ell$).

We choose $k:=\left(24M^2\right)^{-1}$ and consider the function
$h(y)=y\cdot e^{ky}$, whose derivatives are given by
$$h'(y)=h(y)\cdot 
\big(\tfrac{1}{y}+k\big),\qquad h''(y)=h(y)\cdot
\big(\tfrac{2k}y+k^2\big).$$ Now selecting $\eta$ as
$\eta=\frac{k}{2}$ we obtain that the first term in \eqref{eq:T3-T5}
is positive, namely \[T_3(y)=(2k y-\eta)h(y)\geq \tfrac32kh(y)\] for
any $y\in\left[1,M^2\right]$ which we recall is the range of $v^2$ for
the points we consider.

Furthermore, as $\frac{2}{1-\eta}=2(1+\lambda \eta)$ for
$\lambda=\frac{1}{1-\eta}\leq \frac{48}{47}$, we can bound 
\begin{align*} 
  T_4(y)=&\,\frac{h(y)}{y}\cdot \bigg[
  4(2k+k^2y)y-(6+2\lambda \eta) (1+ky)^2+6(1+ky)-\eta y\bigg]\\
  =&\, \frac{h(y)}{y}\cdot \bigg[
  2ky-\eta(2\lambda+y)+4k^2y^2-4\lambda\eta
  ky-(6+2\lambda\eta)k^2y^2\bigg]\\
  =&\,\frac{h(y)}y\left[2ky-\frac k2y-k\lambda +4k^2y^2-2\lambda k^2y
    -(6+\lambda k)k^2y^2\right]\\
  \geq&\, \frac{h(y)}{y}\cdot \bigg[ \frac38ky-6k^2y^2\bigg]\geq
  \frac18 kh(y). 
\end{align*}

We recall that so far we have only imposed an upper bound on $\gamma$,
namely $\gamma\in (0,\gamma_0)$, $\gamma_0=\gamma_0(\eta,\ell)$ the
number given by Lemma \ref{lemma:f}. We shall now prove that for
$\gamma$ chosen small enough (depending on $\eta$ and $k$) also
$T_5^{(\eps)}$ will be positive.

Namely, as $\frac{(h'(y))^2\cdot y}{h^2(y)}\le \frac{(h'(y))^2\cdot
  y}{h(y)}=e^{ky}(1+ky)^2\leq 2$ for $y\in \big[1,M^2\big]$, we select
$\gamma\in(0,\gamma_0)$ small enough to assure that
$$2\gamma\left(1+\eta^{-1}\right)\cdot C_1\leq \frac18,$$
$C_1=C_1(\ell)$ as in \eqref{est:der-beta}, in order to get
$$T_5^{\eps}\geq \abs{\bepsp}\la \nu,\nu_\Ob\ra_+ 
+\frac\gamma8 \abs{P_{TM}\nu_\Ob}^2\bepspp.$$

All in all we thus conclude that for points $p$ with $v(p)\leq M$ and
$\abs{A(p)}\geq \bar F$ \begin{align*}\left(\tdt-\Delta\right) G
+\frac1h\langle\nabla h,\nabla G\rangle \leq &\,-\frac{k}{2}h\cdot
e^{\gamma\aleps} |\nabla A|^2
-\frac{3k}2|A|^2\cdot G+\gamma^{-2}M^4\cdot G\\
&\,-\frac{k}{8} |\nabla v|^2 \cdot G
-\left[\frac\gamma8\beta_\epsilon''|P_{TM}\nu_{\O}|^2
  +\frac12|\beta_\epsilon'|\langle\nu,\nu_{\O}\rangle_+\right] \cdot
G.  \end{align*} This implies the claim of the lemma as we may further
increase the number $\bar F=\bar F_\gamma$ determined originally in
Lemma \ref{lemma:f} in order to achieve that $\gamma^{-2}M^4\leq
\frac{k}2\bar F^{2}$, allowing us to absorb the third term
into the second term on the right-hand side.
\end{proof}

We now localise these estimates to be able to apply the maximum
principle in halfspaces.

\begin{proposition}
  \label{prop:C2-est}
  Given any level $\ell\in\R$ and any numbers $M\ge1$ and $Q<\infty$,
  there exists a constant $C$ depending only on $\ell,\,M$, $Q$ and
  the obstacle such that for solutions $(M_t)_t$ of \eqref{eps fluss}
  evolving from an initial surface $M_0=graph(u_0)$ disjoint from the
  obstacle and with $\sup u_0\leq Q$ that satisfy \[v\le
  M\quad\text{on}\quad
  M_t\cap\left\{x^{n+2}\ge\ell-1\right\}\quad\text{for every }t\ge0,\]
  the second fundamental form is controlled on $\Mth$ by
  \begin{enumerate}[(i)]
  \item \[(U-\ell)^4\cdot|A|^2\le \frac Ct\] for $t\in(0,1]$
    and 
  \item \[(U-\ell)^4\cdot|A|^2\le
    C\cdot\left(1+\sup\limits_{\Mnullth}(U-\ell)^4\cdot |A|^2\right).\]  
    for all $t\geq 0$.
  \end{enumerate}
\end{proposition}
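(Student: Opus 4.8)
The plan is to derive both estimates by applying the parabolic maximum principle to a localized version of the quantity $G = h(v^2)\cdot e^{\gamma\aleps}\cdot\abs{A}^2$ from Lemma \ref{lemma:mod-f}, multiplied by a suitable power of the cutoff $(U-\ell)_+$ that vanishes at height $\ell$. Concretely, for part (ii) I would study $\varphi := (U-\ell)_+^4 \cdot e^{\gamma\aleps}\abs{A}^2$, or better $\tilde\varphi := (U-\ell)_+^4\cdot G/h(v^2)$ adjusted so that the good negative terms of Lemma \ref{lemma:mod-f} survive; since $v\le M$ on $M_t\cap\{x^{n+2}\ge\ell-1\}$ by hypothesis, $h(v^2)$ is pinched between two positive constants depending only on $M$, so $G$ and $e^{\gamma\aleps}\abs{A}^2$ are comparable there and we may freely pass between them. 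The height function satisfies $\dot U - \Delta U = \aleps/v$, which is bounded on $\{x^{n+2}\ge\ell\}$ by Corollary \ref{cor:K-introd}, and $\abs{\na U}^2 = 1 - v^{-2} \le 1$; these are the only facts about $U$ needed to commute the cutoff through the heat operator.

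The key steps, in order, are: (1) fix $M$ and $\ell$, invoke Lemma \ref{lemma:mod-f} to get $\gamma, k, \bar F$ so that $(\dt - \Delta)G + h^{-1}\la\na h,\na G\ra \le -\tfrac{k}{8}[h e^{\gamma\aleps}\abs{\na A}^2 + G\abs{A}^2 + G\abs{\na v}^2] - [\ldots]_{\ge 0}\cdot G$ wherever $\abs{A}\ge\bar F$ and $v\le M$; (2) compute $(\dt-\Delta)\tilde\varphi$ using the product rule, producing cross terms of the form (power of $(U-\ell)$)$\times G \times (\dot U - \Delta U)$, (power)$\times\abs{\na(U-\ell)}\cdot\abs{\na G}$, and (power)$\times G\cdot\abs{\na(U-\ell)}^2$, all of which are controlled using the bounds on $\dot U - \Delta U$ and $\abs{\na U}$; (3) at an interior spatial maximum of $\tilde\varphi$ we have $\na\tilde\varphi = 0$, which lets us solve for $\na G$ (roughly $\na G = -4(U-\ell)^{-1}G\,\na U + (\text{terms from }h)$) and substitute, turning the dangerous $\abs{\na U}\cdot\abs{\na G}$ cross term into something absorbable by the $-\tfrac{k}{8}G\abs{\na v}^2$ and $-\tfrac{k}{8}G\abs{A}^2$ terms together with the $\na v$--$\na A$ relation $\na_i v = -v^2 A_i^k \na_k U$ from \eqref{eq:deriv-v}; (4) conclude that at a maximum where $\tilde\varphi$ is large, one has $\abs{A}\ge\bar F$ automatically (since $(U-\ell)$ is bounded above by $Q-\ell$ and $v\ge 1$), Lemma \ref{lemma:mod-f} applies, and $(\dt-\Delta)\tilde\varphi < 0$, so $\tilde\varphi$ cannot exceed the larger of its boundary/initial value and a constant coming from the regime $\abs{A}<\bar F$. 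For part (i), I would instead run the same computation on $t\cdot(U-\ell)_+^4\cdot e^{\gamma\aleps}\abs{A}^2$ (or the $G$-version), so that the extra $+G$ from $\dot t = 1$ is dominated by the $-t\cdot\tfrac{k}{8}G\abs{A}^2$ term once $G$ is large — this is the standard Ecker--Huisken trick and gives the $C/t$ bound independent of initial curvature.

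One technical point to handle carefully: $(U-\ell)_+$ is only Lipschitz, not $C^2$, across $\{U=\ell\}$, and $M_t\cap\{x^{n+2}\ge\ell-a\}$ is compact by Assumption \ref{ass:a-priori-h} but $M_t$ itself need not be. The clean fix is to replace $(U-\ell)_+$ by $\chi(U)$ for a smooth nonincreasing-derivative cutoff $\chi$ with $\chi \equiv 0$ on $\{U\le\ell\}$ and $\chi(s)\ge c(s-\ell)^4$ on $\{U\ge\ell+1\}$, noting all the estimates degrade gracefully on the compact transition region $\{\ell\le U\le\ell+1\}$ where $\abs{A}$ large still forces the sign, or simply to observe that a spatial maximum of $\tilde\varphi$ cannot occur at a point with $U=\ell$ (where $\tilde\varphi=0$) nor escape to infinity (by Assumption \ref{ass:a-priori-h} and the fact that $\tilde\varphi$ is supported where $U\ge\ell$), so the maximum principle applies on the effectively compact region $\{U\ge\ell\}$.

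I expect the main obstacle to be step (3): balancing the gradient cross-terms. The term $(U-\ell)^3\abs{\na U}\abs{\na G}$ arising from differentiating the cutoff is a priori of the same order as the good terms, and controlling it requires simultaneously using the maximum-point identity $\na\tilde\varphi=0$, Kato's inequality, Young's inequality, and crucially the relation $\abs{\na v}^2 \le C v^4 \abs{A}^2 \abs{\na U}^2 \le C M^4 \abs{A}^2$ from \eqref{eq:deriv-v} so that a bound on $\abs{\na v}$ feeds back into $\abs{A}$ and vice versa — getting the numerical constants to close (i.e. that the leftover after absorption is genuinely negative when $G$ is large) is the delicate bookkeeping. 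The penalisation terms $-[\tfrac\gamma8\bepspp\abs{P_{TM}\nu_\Ob}^2 + \tfrac12\abs{\bepsp}\la\nu,\nu_\Ob\ra_+]\cdot G$ are, by contrast, a free bonus: they are nonpositive and may simply be discarded, so the obstacle plays no role in this final localisation beyond supplying the uniform bounds of Corollary \ref{cor:K-introd} that were already built into Lemma \ref{lemma:mod-f}.
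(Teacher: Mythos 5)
Your proposal is correct and is essentially the paper's argument: Proposition \ref{prop:C2-est} is deduced there from Lemma \ref{tU4g lambda U4v2 lem}, which applies the maximum principle to $w_0=(U-\ell)^4G$ and, for the $C/t$-bound, to $w_1=t(U-\ell)^4G+\lambda(U-\ell)^4v^2$, using $\na w=0$ at the maximum to eliminate $\na G$, Young's inequality, $\abs{\na v}\le v^2\abs{A}\abs{\na U}$ from \eqref{eq:deriv-v}, and the fact that $w\ge D$ forces $\abs{A}\ge\bar F$ --- exactly your steps (2)--(4). The one genuine deviation is in part (i): you drop the auxiliary term $\lambda(U-\ell)^4v^2$ and absorb the $+(U-\ell)^4G$ contribution from $\dot t=1$ into the quadratic good term $-c_1t(U-\ell)^4G^2$, which is legitimate because at a point with $w=t(U-\ell)^4G\ge D$ one has $tG\ge D(Q-\ell)^{-4}$, so the quadratic term dominates once $D$ is large; the paper instead absorbs it pointwise into $-2\lambda(U-\ell)^4v^2\abs{A}^2$ coming from the evolution of the auxiliary term. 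Your variant is marginally leaner, and it also validates your closing remark: with your test function the penalisation terms of Lemma \ref{lemma:mod-f} really are a discardable bonus, whereas in the paper's computation the term $-\tfrac12\abs{\bepsp}\la\nu,\nu_\Ob\ra_+\cdot G$ is \emph{not} discarded --- it is needed to dominate the $O(\eps^{-1})$ term $2\lambda(U-\ell)^4v^3\la\na\aleps,e_{n+2}\ra$ generated by the auxiliary $\lambda(U-\ell)^4v^2$ (the quantity $III_\eps$ in the paper's proof). Your handling of the Lipschitz cutoff and of compactness via Assumption \ref{ass:a-priori-h} is also consistent with how the paper applies the maximum principle.
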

This proposition is an immediate corollary of the subsequent Lemma
\ref{tU4g lambda U4v2 lem} and the maximum principle.

\begin{lemma}
  \label{tU4g lambda U4v2 lem}
  Let $\ell,\,M,Q\in\R$ and $(M_t)_t$ be as in Proposition
  \ref{prop:C2-est}. Let $G$ be the second fundamental form quantity 
  considered in Lemma
  \ref{lemma:mod-f}. Define 
  \begin{align*}
    w_0=&\,(U-\ell)^4\cdot G&\text{for all $t\ge0$}
    \intertext{and} 
    w_1=&\,t(U-\ell)^4\cdot G+\lambda (U-\ell)^4v^2&\text{for
      $t\in[0,1]$.}
  \end{align*}
  Then there exists a constant $D$ such that
  \[\left(\tdt-\Delta\right) w_i\le0,\quad i=0,\,1,\]
  in every point where the respective function fulfils $w_i\ge D$ and
  $\nabla w_i=0$. 
\end{lemma}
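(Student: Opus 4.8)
The plan is to feed the evolution equations already at our disposal — Lemma~\ref{lemma:mod-f} for $G$, Lemma~\ref{v evol} for $v$, and $\dot U-\Delta U=\aleps/v$ for the height $U:=\la F,e_{n+2}\ra$ — into the product-rule expansions of $\left(\dt-\Delta\right)w_i$, use the stationarity hypothesis $\na w_i=0$ to eliminate the mixed gradient terms, and then show that one manifestly negative term dominates all the rest once $D$ is taken large. Abbreviate $\varphi:=(U-\ell)^4$, so $\na\varphi=4(U-\ell)^3\na U$ and $\left(\dt-\Delta\right)\varphi=4(U-\ell)^3\aleps/v-12(U-\ell)^2|\na U|^2$. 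I will use repeatedly: the hyperplane $\{x^{n+2}=\sup u_0\}$ lies above the obstacle and is a stationary solution of \eqref{eps fluss}, so $U\le\sup u_0\le Q$ and hence $0\le U-\ell\le Q-\ell=:D_0$ on $M_t\cap\{x^{n+2}\ge\ell\}$; the identities $|\na U|^2=1-v^{-2}\le1$ and $\na_iv=-v^2A_i^k\na_kU$ from \eqref{eq:deriv-v}; the bounds $\aleps,v_\O\le K$ from Corollary~\ref{cor:K-introd}; the structural inequality $(\bepsp)^2\le C_1\bepspp$ from \eqref{est:der-beta}; and that $vh'(v^2)/h(v^2)=v^{-1}+kv$ is bounded on the relevant range $v\in[1,M]$.

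For $w_0=\varphi G$ the argument is short. Expanding $\left(\dt-\Delta\right)(\varphi G)=\varphi\left(\dt-\Delta\right)G+G\left(\dt-\Delta\right)\varphi-2\la\na\varphi,\na G\ra$ and inserting Lemma~\ref{lemma:mod-f}, the right-hand side of that lemma contributes, besides non-positive $\beps$-singular terms that I discard, the reservoir $-\tfrac k8\varphi\bigl[h e^{\gamma\aleps}|\na A|^2+G|A|^2+G|\na v|^2\bigr]$ and the drift term $-\varphi h^{-1}\la\na h,\na G\ra$; also $G\left(\dt-\Delta\right)\varphi\le4K(U-\ell)^3G$ after dropping a non-positive piece. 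At a point with $\na w_0=0$ one has $\na G=-\tfrac{4G}{U-\ell}\na U$, whence $-2\la\na\varphi,\na G\ra=32(U-\ell)^2G|\na U|^2\le32(U-\ell)^2G$, while $-\varphi h^{-1}\la\na h,\na G\ra$ becomes a multiple of $(U-\ell)^3G|\na v|$ which Young's inequality splits into a piece absorbed into $-\tfrac k8\varphi G|\na v|^2$ and a piece $\le C(U-\ell)^2G$. Using $U-\ell\le D_0$ to collapse the lower powers of $(U-\ell)$, there remains $\left(\dt-\Delta\right)w_0\le-\tfrac k8\varphi G|A|^2+C(U-\ell)^2G$; since $w_0=\varphi G\ge D$ with $D$ large forces both $|A|\ge\bar F$ (so Lemma~\ref{lemma:mod-f} indeed applies) and $(U-\ell)^2|A|^2\ge cD$, the first term wins and $\left(\dt-\Delta\right)w_0\le0$.

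For $w_1=t\varphi G+\lambda\varphi v^2$ the only genuinely new feature — and the crux of the whole proof — is the term $2v^3\la\naM\aleps,e_{n+2}\ra=2v^3\bepsp\la P_{TM}\nu_\O,e_{n+2}\ra$ coming from $\left(\dt-\Delta\right)(v^2)=-2v^2|A|^2-6|\na v|^2+2v^3\la\naM\aleps,e_{n+2}\ra$ via Lemma~\ref{v evol}: it is the unique $\beps$-singular contribution with no a priori sign, and bounding it uniformly in $\eps$ is what forces the precise shape of $w_1$ and of $G$. The plan is a dichotomy at a stationary point with $w_1\ge D$. If $\lambda\varphi v^2\ge D/2$, then (as $\varphi\le D_0^4$) the gradient function $v$ is large, hence $v>v_\O$ because $v_\O\le K$, so Remark~\ref{rem:sign nabla alpha eta} gives $\la\naM\aleps,e_{n+2}\ra\le0$ and the dangerous term has the favourable sign; the remaining terms are all $\beps$-uniform and are dominated by $-2\lambda\varphi v^2|A|^2$, which here is $\le-D\bar F^2$. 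If instead $t\varphi G\ge D/2$, then $tG\ge cD$ and $t|A|^2\ge cD$; the term $+\varphi G$ produced by $\partial_t t$ combines with $-\tfrac k8 t\varphi G|A|^2$ into $\varphi G\bigl(1-\tfrac k8 t|A|^2\bigr)\le-\tfrac k{16}t\varphi G|A|^2<0$, and the dangerous term, whose modulus is $\le CM^3\lambda\varphi|\bepsp|\,|P_{TM}\nu_\O|$, is absorbed by Young's inequality together with $(\bepsp)^2\le C_1\bepspp$ into the good singular term $\tfrac\gamma8 t\varphi\bepspp|P_{TM}\nu_\O|^2G$ of Lemma~\ref{lemma:mod-f} — here it is essential that $tG\ge cD$, so the factor $t$ in front of that good term does not defeat the absorption. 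I expect this matching of the factor $t$ against the lower bound $tG\gtrsim D$ in the second branch to be the main technical obstacle.

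In both branches the mixed gradient terms are handled as for $w_0$, now using $t\na(\varphi G)+\lambda\na(\varphi v^2)=0$ to solve for $t\varphi\na G$ (which is controlled no matter how small $t$ is) and reducing, via $U-\ell\le D_0$ and Young's inequality, every surviving term to a multiple of $(U-\ell)^2G$, of a bounded constant, or of $\varphi|\na v|^2$; the first is absorbed into $-\tfrac k8 t\varphi G|A|^2$ (the common factor $t$ cancels and the leftover ratio $(U-\ell)^2|A|^2\ge cD$ wins), the second into whichever good term survives, and the third into $-\tfrac k8 t\varphi G|\na v|^2$ respectively $-6\lambda\varphi|\na v|^2$. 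The constants are fixed in the order $\eta,\gamma,k$ as in Lemma~\ref{lemma:mod-f}, then $\lambda\ge2e^{kM^2+\gamma K}$ (so that in the first branch $+\varphi G$ is absorbed by $-2\lambda\varphi v^2|A|^2$), and finally $D=D(M,\ell,Q,\O)$ large enough to justify every ``$D$ large'' step above. Proposition~\ref{prop:C2-est} then follows by applying the maximum principle to $w_0$ on $[0,\infty)$ and to $w_1$ on $[0,1]$, the boundary contribution on $\{x^{n+2}=\ell\}$ being harmless since $\varphi$ vanishes there.
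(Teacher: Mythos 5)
Your proposal is correct in substance and follows the paper's overall scheme (localise $G$ by $(U-\ell)^4$, add $\lambda(U-\ell)^4v^2$ and a factor $t$, feed in Lemmas \ref{lemma:mod-f} and \ref{v evol}, use $\nabla w_i=0$ to trade $\nabla G$ for lower-order data, and let $-c\,t\varphi G|A|^2$ dominate for $w_i\ge D$), but you handle the crux --- the signless $\eps$-singular term $2\lambda\varphi v^3\langle\naM\aleps,e_{n+2}\rangle$ --- by a genuinely different mechanism. The paper observes pointwise that $\langle\naM\aleps,e_{n+2}\rangle=\bepsp\big(\tfrac1{v_\O}-\tfrac1v\langle\nu_\O,\nu\rangle\big)\le\tfrac{|\bepsp|}{v}\langle\nu,\nu_\O\rangle_+$ and pairs the term \emph{linearly} against the good term $-\tfrac12\,t\varphi\,|\bepsp|\langle\nu,\nu_\O\rangle_+G$ of Lemma \ref{lemma:mod-f}, so that the whole combination equals $-\tfrac12|\bepsp|\langle\nu,\nu_\O\rangle_+\big[w-5\lambda U^4v^2\big]\le0$ once $D\ge5\lambda Q^4M^2$ --- no Young's inequality and no use of \eqref{est:der-beta}. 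You instead bound the term by $C\lambda M^3\varphi|\bepsp|\,|P_{TM}\nu_\O|$ and absorb it via Young and $(\bepsp)^2\le C_1\bepspp$ into the \emph{other} good term $-\tfrac\gamma8\,t\varphi\bepspp|P_{TM}\nu_\O|^2G$, using $t\varphi G\ge D/2$ to keep the Young remainder of size $O(1/D)$; this is valid (where $\bepspp=0$ one also has $\bepsp=0$, so the division is harmless) and correctly identifies why the factor $t$ does not defeat the absorption. The paper's route is cleaner and needs a smaller $D$; yours shows the $\bepspp|P_{TM}\nu_\O|^2$ reservoir alone suffices. One wrinkle: your first branch ($\lambda\varphi v^2\ge D/2$) is vacuous once $D>2\lambda(Q-\ell)^4M^2$, since $v\le M$ is a standing hypothesis from Proposition \ref{prop:C2-est}; as written, that branch both contradicts $v\le M$ (you argue ``$v$ is large'') and would need $|A|\ge\bar F$ to invoke Lemma \ref{lemma:mod-f}, which is not guaranteed there. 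This does no harm --- for $D$ large only your second branch occurs --- but you should either delete the first branch or note its vacuity rather than argue inside it.
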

\begin{proof}
  We may assume that $\ell=0$.  Let $\theta\in\{0,1\}$ and set 
  $$w:=\,((1-\theta)+t\theta)U^4G+\theta\lambda U^4v^2,$$
  where $\lambda$ is a large constant that will be fixed later.  This
  allows us to consider the two cases simultaneously. If $\theta=0$,
  we obtain a priori estimates up to $t=0$ provided that $|A|^2$ is
  initially bounded. If $\theta=1$, we obtain local in time a priori
  estimates. \par
  We first observe that since $\{x^{n+2}=Q\}$ lies above $M_0$, it
  must be disjoint from the obstacle and consequently serves as upper
  barrier for $U$ for all times.  In addition to the $C^1$-estimates
  we have furthermore bounds on $\aleps$ thanks to Corollary
  \ref{cor:K-introd}.  Consequently, if $w$ is large, say $w\geq D$,
  then also $\abs{A}^2$ must be large. In particular, for a suitable
  choice of $D$, it is enough to consider points with $\abs{A}^2\geq
  \bar F$, the constant of Lemma \ref{lemma:mod-f}.  We can thus
  estimate, using Lemmas \ref{v evol} and \ref{lemma:mod-f}
  \begin{align}
    \left(\tdt -\Delta\right) w=&\,\theta U^4G
  +4((1-\theta)+t\theta)U^3G\left(\dot U-\Delta U\right)\nonumber\\
  &\,+((1-\theta)+t\theta)U^4(\dot G-\Delta G)
  +4\theta\lambda U^3v^2\left(\dot U-\Delta U\right)\nonumber\\
  &\,+2\theta\lambda U^4v(\dot v-\Delta v)\nonumber\\
  &\,-12((1-\theta)+t\theta)U^2G|\nabla U|^2
  -8((1-\theta)+t\theta)U^3\langle\nabla U,\nabla G\rangle\nonumber\\
  &\,-12\theta\lambda U^2v^2|\nabla U|^2 -16\theta\lambda
  U^3v\langle\nabla U,\nabla v\rangle -2\theta\lambda U^4|\nabla
  v|^2\umbruch\nonumber\\
  \le&\,\theta U^4G
  +4((1-\theta)+t\theta)U^3G\frac{\alpha_\epsilon}v\nonumber\\
  &\,+((1-\theta)+t\theta)U^4\left(\underline{-\frac{k}8G\abs{A}^2}
    -\frac12 |\beta_\epsilon'| \langle\nu,\nu_{\O}\rangle_+\cdot G
    -\frac1h\langle\nabla h,\nabla G\rangle\right)\nonumber\\
  &\,+4\theta\lambda
  U^3v^2\frac{\alpha_\epsilon}v \label{est:w1}\\
  &\,+2\theta\lambda U^4v\left(\underline{-|A|^2v} -\frac2v|\nabla
    v|^2 +v^2\left\langle\na_M\alpha_\epsilon,
      e_{n+2}\right\rangle\right)\nonumber\\
  &\,-12((1-\theta)+t\theta)U^2G|\nabla U|^2
  -8((1-\theta)+t\theta)U^3\langle\nabla U,\nabla G\rangle\nonumber\\
  &\,-12\theta\lambda U^2v^2|\nabla U|^2 -16\theta\lambda
  U^3v\langle\nabla U,\nabla v\rangle -2\theta\lambda U^4|\nabla
  v|^2. \nonumber
\end{align}

Using that $G\leq C\abs{A}^2$, we can use the first underlined term
above to absorb (upto an additive constant $C$) the second term of the
right-hand side. We drop the first term of the penultimate
line. Provided $\lambda$ is chosen sufficiently large, we can
furthermore absorb the first term on the right hand side into the
second underlined term.  Estimating also the penultimate term using
Young's inequality and bounding the first order terms by a constant,
this reduces the above inequality to 
\begin{align} \left(\tdt -\Delta\right)w\leq&\,
  -c_1((1-\theta)+t\theta)U^4G^2-\theta \lambda
  U^4v^2\abs{A}^2-\left(6-\tfrac14\right)\theta\lambda U^4\abs{\na
    v}^2  +C\nonumber\\
  \label{est:w2} &\,+I+II+III_\eps 
\end{align}
for some $c_1>0$ and a constant $C<\infty$ which may also depend on
$\lambda$.
  
  Here $I$ and $II$ stand for the terms appearing on the right hand
  side of \eqref{est:w1} that contain $\na G$
  while $$III_\eps:=2\theta\lambda U^4v^3\la \na
  \aleps,e_{n+2}\ra-((1-\theta)+t\theta)U^4G\frac12\abs{\bepsp}\la\nu,
  \nu_\Ob\ra_+.$$ 
  
  Since we only consider points at which $\na w=0$ we can replace $\na
  G$ in both $I$ and $II$ using
   \begin{align*}
    0=&\,\nabla w\\ =&\,4((1-\theta)+t\theta)U^3G\nabla U
    +((1-\theta)+t\theta)U^4\nabla G +4\theta\lambda U^3v^2\nabla U
    +2\theta\lambda U^4v\nabla v.
  \end{align*}
 Recall furthermore that 
 \[|\nabla v|^2=g^{ij}\na_i v\na_j v=v^4 g^{ij}A_i^k\na_k U A_j^l
 \na_l U\leq v^4\abs{\na U}^2\abs{A}^2 \leq v^4\abs{A}^2\le cG,\]
 compare \eqref{eq:deriv-v} and that $h(y)=ye^{ky}$ so, writing for
 short $\na h$ for $\na\left(h\left(v^2\right)\right)$,
\[\frac{\nabla h}{h}=\frac{h'}{h}\nabla\left(v^2\right)
=\left(\frac1{v^2}+k\right)2v\nabla v\] which, thanks to the
$C^1$-estimates is bounded by $C\abs{ \na v}\leq C\abs{A}\leq C
G^{1/2}$.  We can thus estimate 
\begin{align*}
  I=&\,-((1-\theta)+t\theta)U^4\tfrac1h\langle\nabla h, \nabla
  G\rangle\\ 
  \le&\,C((1-\theta)+t\theta)U^3 v G\abs{\la \nabla v,\na U\ra}
  +C\theta\lambda v^3U^3\abs{\langle\nabla v,\nabla U\rangle}\\
  &\,\quad+4\theta\lambda U^4\left(1+kv^2\right)|\nabla
  v|^2\\
  \leq&\, C((1-\theta)+t\theta)U^3 G^{3/2}+C\theta\lambda U^3
  \abs{A}+5\theta\lambda U^4|\nabla v|^2,
\end{align*}
where we used that $kv^2\leq \frac1{24}$ as well as that $v$ is
bounded in the last step. Using Young's inequality, we can absorb the
first two terms of this estimate into the first two terms of the right
hand side of \eqref{est:w2} and another additive constant
$C(\lambda)$, while the last term is absorbed into the third term of
\eqref{est:w2}.
  
  Furthermore, the terms appearing in 
   \begin{align*}
     II=&\,-8((1-\theta)+t\theta)U^3\langle\nabla U,\nabla
     G\rangle\umbruch\\
     =&\,32((1-\theta)+t\theta)U^2G|\nabla U|^2 +32\theta\lambda
     U^2v^2|\nabla U|^2 +16\theta\lambda U^3v\langle\nabla U,\nabla
     v\rangle\\
     \leq&\, C ((1-\theta)+t\theta)U^2G+C+C\theta \lambda U^3\abs{A}
  \end{align*}
  can also be absorbed into the first two terms on the right hand side
  of \eqref{est:w2} and a constant.
  
  Finally, to analyse $III_\eps$, we recall that $\la \na \aleps,
  e_{n+2}\ra=\beta_\epsilon' \big(\langle\nu_{\O}, e_{n+2}\rangle
  -\tfrac1v\langle\nu_{\O},\nu\rangle\big)\leq
  \frac{\abs{\bepsp}}{v}\la \nu_\Ob,\nu\ra$.  Thus 
  \begin{align*}
    III_\eps \leq&\,-\frac12 \la\nu,\nu_\Ob\ra_+\abs{\bepsp}\cdot
    \left[((1-\theta+t\theta)U^4G-4\theta\lambda U^4v^2\right]\\
    =&\,-\frac12 \la\nu,\nu_\Ob\ra_+\abs{\bepsp}\cdot
    \left[w-5\theta\lambda U^4v^2\right] 
  \end{align*}
  is negative in points where $w\geq D$ provided $D$ is chosen
  sufficiently large.

  All in all we thus conclude that we can fix a number $\lambda\geq 1$
  so that the estimate
  \[\left(\tdt-\Delta\right)w\le-\tfrac{c_1}4((1-\theta)+t\theta)U^4G^2
  +C\]
 holds in every point in which $\na w=0$ and $w\geq D$.
 
 We finally remark that in points where $w$ is large 
 the first term in this estimate dominates since also 
 $$((1-\theta)+t\theta)U^4G^2\ge((1-\theta)+t\theta)^2U^4G^2=U^{-4}
 (w-\theta \lambda U^4 v^2)^2$$ must be large as $U$ and $v$ are
 bounded above and as we only consider times $t\in[0,1]$ in case
 $\theta=1$.  Thus increasing $D$ further allows us to absorb the
 second term and yields the claim.
\end{proof}

 \subsection{$C^k$-estimates for solutions of the approximate problem}
 In order to guarantee the existence of solutions to the penalised
 flow \eqref{eps fluss} for all time, we show that, for each
 \textit{fixed} number $\eps>0$, solutions of \eqref{eps fluss}
 satisfy $C^k$-estimates for all positive times.
 
 We stress that these estimates are not uniform in $\eps$ and indeed
 that no such uniform control is possible as already the solutions of
 the stationary graphical obstacle problem are in general only in 
 $C^{1,1}$, see \cite{CGGlobalC11}. As such we shall refrain from
 writing down the explicit form of most terms and for the most part
 use the notation $B* C$ to denote arbitrary linear combinations of
 traces of $B\otimes C$ with respect to the metric.

 Recall that under the flow \eqref{eps fluss}, the metric evolves
 according to \eqref{eq:evol-g}, so that its Christoffel-symbols
 $\Gamma$ satisfy
 $$\tdt \Gamma= \na A* A+   \na \aleps* A+\aleps * \na A.$$

Since $\dt\na B=\na \dt B+\dt\Gamma * B$ for any
tensor $B$, we thus get
$$\tdt \na^m A=\na^m\tdt A+\na^{a_1} A * \na^{a_2} A*
\na^{a_3} A+\na^{a_1} \aleps * \na^{a_2} A* \na^{a_3} A,$$ 
where $a_i\in \N_0$ range over all triples with $a_1+a_2+a_3=m$.

The evolution equation for the second fundamental form for the general
flow \eqref{eq:general-flow} is known to be
\[\tdt A_{ij}=\nabla_i\nabla_jf-fA_i^kA_{kj}\]
which implies
\[\tdt A_{ij} -\Delta A_{ij} =|A|^2 A_{ij} -2HA_i^kA_{kj}
  +(H-f) A_i^kA_{kj} -\nabla_i\nabla_j(H-f).\]
For our flow we thus have
$$\left(\tdt-\Delta\right)A=-\na^2\aleps +\alpha_\epsilon*A*A +A* A*
A.$$ Using that $\Delta\na^mA=\na^m\Delta A+\na^{a_1} A * \na^{a_2} A*
\na^{a_3} A$, compare \eqref{Gauss formula}, we get
\begin{align*}
  \left(\tdt-\Delta\right) \abs{{\na^m A}}^2
  =&\, -2\abs{\na^{m+1}A}^2 +\na^{m+2}\aleps* \na^m A\\
  &\,+(\na^{a_1}A+\na^{a_1} \aleps )* \na^{a_2} A* \na^{a_3} A*\na^m A,
\end{align*}
$a_1+a_2+a_3=m$. Since $A$ is bounded on $M_t\cap \left\{x^{n+2}\geq
  \ell\right\}$ and since in such regions the depth of penetration is
controlled by the results of Section \ref{sect:penetration}, we
conclude that in this region
\begin{equation}
\label{est:higher-order}
\left(\tdt-\Delta\right)\abs{\na^m A}^2\leq -2\abs{\na^{m+1} A}^2+C\cdot
\left|\nabla^mA\right|^2+C+C\eps^{-2(m+2)},
\end{equation}
with $C$ depending on $\ell$, the $C^{m+3}$-norm of the obstacle
$\psi$, bounds on $A$, $\nabla A$, \ldots, $\nabla^{m-1}A$, and either
a lower bound on $t$ or a bound on the second fundamental form of the
initial surface.

\begin{remark}\label{rem:higher-order}
  For fixed $\epsilon>0$, we deduce iteratively estimates for
  $|\nabla^mA|$, $m=1,2,\ldots$ for solutions of \eqref{eps fluss} of
  the following form:
  \begin{enumerate}[(i)]
  \item for any $\ell\in\R$, any $0<\tau$ and any $m\in\N$,
    there is a constant $C$ depending on $\eps>0$, on the obstacle and on
    local $C^1$-bounds of $M_0\cap\left\{x^{n+2}>\ell-1\right\}$, so
    that $\left|\nabla^m A\right|\le C$ in
    $M_t\cap\left\{x^{n+2}\ge\ell\right\}$, $t\ge\tau$. 
  \item if $M_0\cap\left\{x^{n+2}>\ell-1\right\}$ is additionally in
    $C^{m+2}$, then these estimates are valid up to time $t=0$,
    i.\,e.{} $\left|\nabla^m A\right|\le C$ in
    $M_t\cap\left\{x^{n+2}\ge\ell\right\}$, $t\in[0,\infty)$.
  \end{enumerate}
\end{remark}
\begin{proof}
  We may proceed as in the proof in the situation without obstacles,
  see \cite[Theorem 5.9]{OSMarielMCFwithoutSing}, after replacing the
  set where $u<0$ with the one where $U>0$ due to the different
  orientation of the graphs.  If the derivatives
  $\left|\nabla^kA\right|$, $1\le k\le m-1$, are already uniformly
  bounded in the set considered, the evolution equations for
  $\left|\nabla^mA\right|^2$ and $\left|\nabla^{m-1}A\right|^2$ are of
  the same form as in the proof of \cite[Theorem
  5.9]{OSMarielMCFwithoutSing}. Note that the constants $c$ will now
  depend on $\frac1\epsilon$. This, however, does not cause problems
  as we do not claim that these estimates are independent of
  $\epsilon$. When we compute the evolution equation
  of \[tU^2\left|\nabla^mA\right|^2 +\lambda
  \left|\nabla^{m-1}A\right|^2,\] we get an additional term
  $2tu\alpha_\epsilon \langle\nu,e_{n+2}\rangle
  \left|\nabla^mA\right|^2$, which can easily be absorbed. The rest of
  the argument carries over to the present situation.
\end{proof}

\section{Existence of approximate solutions}
We construct smooth approximate solutions to \eqref{MCF} depending
on parameters 
\begin{itemize}
\item $\epsilon\in (0,1)$ controlling the penalisation,
\item $L\in\R$, the height at which we truncate our initial value,
\item $R>3$, the radius of the ball on which we solve a Dirichlet
  problem, and
\item $\delta\in (0,1)$ to mollify both the truncated initial values
  and the obstacle. 
\end{itemize}

Given an obstacle $\Ob$ with $\pO=\graph(\psi)$ for a
$C^{1,1}_{\loc}$-function $\psi$ as described in Definition \ref{def: ini
  data}, we extend $\psi$ by $-\infty$ to $\R^{n+1}$. Then we mollify
$\Ob$ and consider the obstacles $\Ob^\de$, $\delta\in(0,1]$,
characterised by $\partial \Ob^\delta=\graph(\psi_\delta)$,
where $$\psi_\delta=\psi* \eta_\delta$$ for a smooth mollification
kernel $\eta_\delta=\delta^{-(n+1)}\eta(\cdot/\delta)$,
$\supp\eta\subset B_1(0)$, and let $$\alpha_\epsilon^\delta=\beps\circ
\dist_{\partial \Ob^\delta}$$ be the corresponding penalisation
function.

We remark that all results derived in the previous sections (except
for the higher order estimates of Remark \ref{rem:higher-order}) are
valid with constants independent of $\delta$ for this whole family of
obstacles as $(\Ob^\delta)_{\delta\in (0,1]}$ satisfy uniform
$C^2_\loc$-estimates.

We remark that mollifying the initial value $u_0$ with the same kernel
ensures that $u_0\ge\psi$ remains true after mollification.

In order to apply the results derived in the previous sections, we
shall furthermore only consider parameters so that
\beq\label{ass:eps-R1} \eps\leq \eps_0(L)\text{ the constant of Lemma
  \ref{lemma:penetration}} \eeq and so that $R$ is large enough to
guarantee that the initial map $u_0$ satisfies \beq\label{ass:eps-R2}
u_0\leq L-1 \text{ outside } B_{R/2}(0). \eeq

We then have the following existence result for approximate solutions.
\begin{proposition}
  \label{prop:approx-sol-exist}
  Let $u_0$ and $\Ob$ with $\partial\O=\graph\psi|_{\P}$ be an initial
  map and an obstacle as described in Definition \ref{def: ini data}
  and let $\O^\delta$, $\delta\in(0,1]$, be the mollified obstacles as
  described above.
  
  Then for every quadruple $(\eps,\delta,L,R)\in (0,1)^2\times
  \R\times [3,\infty)$ of parameters for which the assumptions
  \eqref{ass:eps-R1} and \eqref{ass:eps-R2} are satisfied, there exists
  a smooth solution $u^{\delta,L}_{\epsilon,R}\colon
  B_R(0)\times[0,\infty)\to\R$ to \beq \label{eq:approx-sol}
  \begin{cases}
    \dot u=\sqrt{1+|Du|^2}\cdot\left(\divergenz
      \left(\fracd{Du}{\sqrt{1+|Du|^2}}\right)
      +\alpha_\epsilon^\delta\right)
    &\text{in }B_R(0)\times[0,\infty),\\
    u=L&\text{on }\partial B_R(0)\times[0,\infty),\\
    u(\cdot,0)=(\max\{u_0,L\})_\delta =\max\{u_0,L\}*\eta_\delta
    &\text{in }B_R(0).
  \end{cases}
  \eeq
  
  Furthermore, for any $\ell>L+2$, there exists a constant
  $C=C(u_0,\O,\ell)$ such that 
  \begin{equation}
    \label{est:approx-sol}
    \left|Du^{\delta,L}_{\epsilon,R}(x,t)\right|+\sqrt t\cdot\left|D^2
      u^{\delta,L}_{\epsilon,R}(x,t)\right| +\sqrt 
    t\cdot\left|\tdt u^{\delta,L}_{\epsilon,R}(x,t)\right|\le C
  \end{equation}
  in every $(x,t)\in B_R\times [0,\infty)$ with $u(x,t)\ge \ell$.
  
  Here, the function $\alpha_\epsilon^\delta$ is evaluated at the
  point $\big(x,u^{\delta,L}_{\epsilon,R}(x,t)\big)$ on the evolving
  surface $\graph u^{\delta,L}_{\epsilon,R}(\cdot,t)$.
\end{proposition}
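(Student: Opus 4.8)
The plan is to produce the approximate solution by classical quasilinear parabolic theory and then to extend it to all times using the a priori estimates of Sections \ref{sect:penetration}--\ref{section:A}, exploiting that near $\partial B_R(0)$ the penalisation switches off and the problem degenerates to the obstacle-free Dirichlet problem for graphical mean curvature flow. First I would record that, by \eqref{ass:eps-R2} together with $R\ge3$ and $\delta<1$, the mollified datum $(\max\{u_0,L\})_\delta$ equals the constant $L$ on a full neighbourhood of $\partial B_R(0)$, where moreover $\alpha_\epsilon^\delta$ vanishes since the mollified obstacle $\graph\psi_\delta$ lies below level $L-1$ there and hence the graph points $(x,u(x,t))$ with $u\ge L$ sit outside the (closed) obstacle, where $\beta_\epsilon\equiv0$. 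Consequently the smooth initial and boundary data satisfy the compatibility conditions of every order at the parabolic corner, and \eqref{eq:approx-sol}, written in non-divergence form $\dot u=\big(\delta^{ij}-\frac{D_iuD_ju}{1+|Du|^2}\big)D_{ij}u+\alpha_\epsilon^\delta(x,u)\sqrt{1+|Du|^2}$, admits a unique smooth solution on a maximal interval $[0,T_{\max})$; note $\alpha_\epsilon^\delta$ is a smooth function of $(x,u)$ near the relevant region because $\dist_{\partial\O^\delta}$ is smooth near $\graph\psi_\delta$ and, by the barrier argument of Lemma \ref{lemma:penetration}, the solution stays in $\{\dist_{\partial\O^\delta}\ge-C_0\eps\}$ for as long as it exists.

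To prove $T_{\max}=\infty$ I would establish, for each finite $T$, bounds on $u$ and all its spatial derivatives on $\overline{B_R(0)}\times[0,T]$ (with constants now free to depend on $\eps,\delta,L,R,T$) and conclude by the standard continuation argument. The $C^0$-bound $L\le u\le Q:=\max\{\sup u_0,L\}$ follows from comparison with the constant subsolution $L$ (cf. the proof of Lemma \ref{lemma:penetration}(i), using $\alpha_\epsilon^\delta\ge0$) and the stationary supersolution $\{x^{n+2}=Q\}$, which lies above $M_0$ and hence above the obstacle. Since $\eps\le\eps_0(L)$ and $(\max\{u_0,L\})_\delta\ge\psi_\delta$ (so $M_0$ is disjoint from $\O^\delta$), Lemma \ref{lemma:penetration} and Corollary \ref{cor:K-introd}, applied with $\O^\delta$, control the penetration depth and $\alpha_\epsilon^\delta$, $|A^{\O^\delta}|$, $v^{\O^\delta}$ on $M_t\cap\{x^{n+2}\ge\ell\}$, uniformly in $\delta$ because the $\O^\delta$ have uniform $C^2_\loc$-bounds. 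For the gradient I would invoke Proposition \ref{prop:v-est} at level $\ell-1$: the key observation is that for $\ell>L+2$ the part of $M_t$ above height $\ell-a$ is compact and disjoint from the Dirichlet boundary $\partial B_R(0)\times\{L\}$, so Assumption \ref{ass:a-priori-h} holds there and the maximum of $(U-(\ell-1))^2v$ over $\{U\ge\ell-1\}$ is attained at an interior spatial maximum (where the computation of Proposition \ref{prop:v-est} applies), on $\{U=\ell-1\}$ (where it vanishes), or at $t=0$ (where it is bounded by the local Lipschitz constant of $u_0$, uniformly in $\delta$), giving $v\le M(\ell)$ on $M_t\cap\{x^{n+2}\ge\ell-1\}$. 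On $\{u<\ell\}$, which contains a neighbourhood of $\partial B_R(0)$ where $\alpha_\epsilon^\delta\equiv0$ and $u\equiv L$ initially, standard interior and boundary gradient estimates for this prescribed-mean-curvature-type equation (cf. \cite{EckerBook}) yield $|Du|\le C(\eps,\delta,L,R)$; altogether one gets a global gradient bound and, in particular, uniform parabolicity.

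Feeding the gradient bound into Proposition \ref{prop:C2-est} (with the above $M$ and $Q$, and using that $M_0$ is smooth so the initial $|A|$ is bounded on the relevant compact set) gives, on $\Mth$, both $(U-\ell)^4|A|^2\le C/t$ for $t\in(0,1]$ and $(U-\ell)^4|A|^2\le C\big(1+\sup_{\Mnullth}(U-\ell)^4|A|^2\big)$ for all $t$, while interior and boundary Schauder estimates control $|D^2u|$ near $\partial B_R(0)$; higher derivatives are then bounded by Remark \ref{rem:higher-order} (valid up to $t=0$ since $M_0\in C^\infty$) in the halfspaces and by iterating Schauder estimates near the boundary, which closes the bootstrap and forces $T_{\max}=\infty$. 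Finally, the estimate \eqref{est:approx-sol} is just the reformulation, at points with $u(x,t)\ge\ell>L+2$, of the bounds $v\le M$ and $(U-\ell)^4|A|^2\le C/t$ via $|Du|=\sqrt{v^2-1}$, $|D^2u|\le C(v)|A|$, together with $|\dot u|\le v\big(\sqrt{n+1}\,|A|+\alpha_\epsilon^\delta\big)$ read off from the equation; all constants entering here come from Propositions \ref{prop:v-est}, \ref{prop:C2-est} and Corollary \ref{cor:K-introd} and so depend only on $\ell$, $u_0$ and the obstacle, hence not on $\eps$, not on $R$ once \eqref{ass:eps-R2} holds, and not on $\delta$ by the uniform $C^2_\loc$-control on $(\O^\delta)$.

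The main obstacle I anticipate is reconciling the a priori estimates of Sections \ref{sect:penetration}--\ref{section:A}, which are formulated under Assumption \ref{ass:a-priori-h} for surfaces that are compact and boundaryless above a given level, with the present genuinely boundary-valued Dirichlet problem; the resolution is precisely the observation above that for $\ell>L+2$ the $C^0$-barrier $u\ge L$ automatically cuts everything above level $\ell-1$ off from the Dirichlet boundary, so those estimates apply verbatim and remain $\eps$-, $R$- and $\delta$-independent, whereas near $\partial B_R(0)$ the penalisation term is identically zero and one is left with the classical, well-understood Dirichlet problem for graphical mean curvature flow, for which interior/boundary gradient, $C^2$ and $C^k$ estimates (non-uniform in $\eps,\delta$, which is all that is needed there) are standard.
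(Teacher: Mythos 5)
Your overall strategy is the one the paper follows: compatibility conditions at the parabolic corner give a smooth solution on a maximal interval, the constants $L$ and $\max\{\sup\psi,\sup u_0,L\}$ serve as barriers, Lemma \ref{lemma:penetration} and Corollary \ref{cor:K-introd} control the penetration depth, and the uniform estimate \eqref{est:approx-sol} is read off from Propositions \ref{prop:v-est} and \ref{prop:C2-est} together with the equation itself. Where you diverge from the paper is in how you bridge the halfspace a priori estimates with the Dirichlet problem for the purpose of proving $T_{\max}=\infty$: you split by \emph{height}, into $\{u\ge\ell\}$ (where the localised estimates apply) and $\{u<\ell\}$ (which you treat as the standard obstacle-free boundary-value problem), whereas the paper splits \emph{spatially}, into the annulus $\{R/2\le|x|\le R\}$ -- which is genuinely obstacle-free because $\psi\le u_0\le L-1<L\le u$ there, and is handled by the boundary estimates of \cite{HuiskenBdry} together with the interior estimates of \cite{EckerHuiskenInvent} -- and the ball $B_{3R/4}$, where the maximum principle is applied to the \emph{unlocalised} evolution equations of $v$, $G$ and $|\nabla^mA|^2$ with already-controlled values on $\partial B_{3R/4}$.

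This difference hides the one genuine gap in your argument. The set $\{u<\ell\}$ is not contained in a neighbourhood of $\partial B_R(0)$: the graph may dip to heights between $L$ and $\ell$ at interior points where it touches (or penetrates) the obstacle, and there $\alpha_\epsilon^\delta$ is active, so the "standard interior and boundary gradient estimates for this prescribed-mean-curvature-type equation" you invoke on $\{u<\ell\}$ do not apply. Your height-localised bound from Proposition \ref{prop:v-est} cannot be pushed down to cover this region either, since the weight $(U-\ell')^2$ degenerates as $\ell'\downarrow L$ and the surface meets the Dirichlet boundary at height $L$. The fix is exactly the paper's spatial localisation: since the solution always stays above $\{x^{n+2}=L\}$ and the penetration depth is controlled, the quantities $v_\O$ and $\alpha_\eps^\delta$ are bounded at every reachable point (Corollary \ref{cor:K-introd} applied at level $L-1$), so Remark \ref{rem:sign nabla alpha eta} makes the plain, unweighted maximum principle for $v$ work on all of $B_{3R/4}$ once the boundary values on $\partial B_{3R/4}$ are controlled by the obstacle-free annulus estimates; the same scheme then gives the (non-uniform) curvature and higher-order bounds needed for continuation. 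With that replacement the rest of your argument, in particular the derivation of \eqref{est:approx-sol} from Propositions \ref{prop:v-est} and \ref{prop:C2-est} at points with $u\ge\ell>L+2$, goes through as in the paper.
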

\begin{proof}
  The choice of $R$ implies that the smooth initial map
  $(\max\{u_0,L\})_\delta$ is constant near $\partial B_R(0)$, so that
  compatibility conditions of any order are fulfilled for the initial
  value problem \eqref{eq:approx-sol}.  Standard parabolic theory
  hence gives the existence of a smooth solution
  $u\equiv u^{\delta,L}_{\epsilon,R}$ defined on a maximal time interval
  $[0,T)$, with $T>0$.
  
  To establish long time existence it is thus sufficient to show that
  the derivatives of $u$ remain bounded for all times which we shall
  prove using a combination of standard techniques for mean curvature
  flow as well as the evolution equations derived in the previous
  sections. We remark that in this part of the proof we do not claim
  that any of the derived bounds are independent of the choice of the
  parameters but will rather prove the uniform a priori bounds
  \eqref{est:approx-sol} separately later on.

  To begin with, we observe that since $\alpha_\epsilon^\delta\ge0$,
  any constant function is a subsolution of the equation; in
  particular the constant $L$ serves as a lower barrier for $u$.
  Furthermore, the constant $\max\{\sup\psi,\sup u_0,L\}$ is a
  solution to the flow equation as $\alpha^\delta_\epsilon$ vanishes
  on its graph, so it is an upper barrier and our solutions remains
  uniformly bounded for all times.
  
  We remark that due to our choice of $R$, we have $u(x,t)\ge L>
  \psi(x)$ for any $|x|\ge R/2$ and any $t>0$. Hence $u$ evolves
  according to graphical mean curvature flow in any annulus
  $(B_\rho\setminus B_\sigma(0))\times[0,T)$ with
  $R/2<\sigma<\rho<R$. Standard theory, see \cite{EckerHuiskenInvent},
  implies uniform estimates for arbitrary derivatives of $u$ away from
  $t=0$ in such annuli.

  A priori estimates near the boundary follow as in
  \cite{HuiskenBdry}: Comparison with minimal surfaces yields boundary
  gradient estimates. The evolution equation of $v$ then implies
  gradient estimates in the annulus $B_R\setminus
  B_{\frac{3R}4}(0)$. Finally, uniform parabolicity of the equation
  leads to bounds on arbitrary derivatives of $u$ in this annulus away
  from $t=0$.

  To derive estimates in the interior, say on $B_{\frac{3R}4}$, we can
  now apply the maximum principle on $B_{\frac{3R}4}$ to the various
  evolution equations derived in the previous sections since we have
  already obtained bounds on the annulus and thus in particular on
  $\partial B_{\frac{3R}4}$; namely gradient estimates now follow from
  Lemma \ref{v evol} and Remark \ref{rem:sign nabla alpha eta},
  estimates on the second fundamental form follow from Lemma
  \ref{lemma:mod-f} and higher order estimates follow from
  \eqref{est:higher-order}.  This concludes the proof of long time
  existence.

  We finally observe that Propositions \ref{prop:v-est} and
  \ref{prop:C2-est} give a priori estimates for the gradient function
  and the second fundamental form, and thus for both $Du$ and $D^2u$,
  of precisely the form claimed in \eqref{est:approx-sol}, in
  particular with a constant that is independent of any of the
  parameters used in the construction.

  These estimates then imply the claim on the time derivative made in
  \eqref{est:approx-sol} since $u$ solves equation
  \eqref{eq:approx-sol} and since the penetration depth, and thus
  $\alpha_\epsilon^\delta$, is a priori controlled according to
  Corollary \ref{cor:K-introd}.
\end{proof}

\section{Proofs of the main results}\label{sect:proofs}
We are now able to prove the existence of viscosity solutions of
graphical mean curvature flow with obstacles as claimed in Theorems
\ref{thm1} and \ref{thm2}. 

\begin{proof}[Proof of Theorem \ref{thm1}]
  Let $\Ob$ and $u_0$ be an obstacle and initial condition as in
  Definition \ref{def: ini data} and let
  $u^{i}=u^{\delta_i,L_i}_{\epsilon_i,R_i}$ be any sequence of
  approximate solutions as constructed in Proposition
  \ref{prop:approx-sol-exist} for which
  $(\epsilon_i,L_i,\delta_i,R_i)\to(0,-\infty,0,\infty)$.
 
  Then the uniform $C^{2;1}$ estimates stated in
  \eqref{est:approx-sol} allow us to apply the variant of the theorem
  of Arzel\`a-Ascoli from \cite[Lemma 7.3]{OSMarielMCFwithoutSing}: we
  obtain a subsequence $u^i$ converging to a limiting function $\tilde
  u:\R^{n+1}\times [0,\infty)\to \R\cup\{-\infty\}$ which induces a
  pair $(\Omega,u)$ consisting of
 $$\Omega:=\{(x, t):u(x,t)>-\infty\}\subset \R^{n+1}\times [0,\infty)$$
 and the restriction $u:=\tilde u\vert_{\Omega}\colon\Omega\to\R$.
 Here the convergence of $u^i\to \tilde u$ is pointwise everywhere and
 in $C_{loc}^{1,\alpha;0,\alpha}(\Omega\cap \{t>0\})$ for every
 $\alpha\in (0,1)$.

\phantom{x}

We recall furthermore, that the graphical velocity of the approximate
solutions is controlled by \eqref{est:approx-sol}. Therefore the
approximate solutions satisfy uniform parabolic H\"older estimates up
to time $t=0$ on any compact subsets of $\Omega$ and so the obtained
limit $u$ is in $C_{loc}^0(\Omega)$ and attains the desired initial
value $u(0)=u_0$.

\phantom{x}

We now prove that $u$ is a viscosity solution of
\begin{equation}
   \label{eq:viscobst}
 \min\left\{ \dot u-\sqrt{1+|Du|^2}\cdot
   \divergenz\left(\frac{Du}{\sqrt{1+|Du|^2}}\right),
     u-\psi\right\}=0.
 \end{equation}

We recall that $u$ is a viscosity subsolution for the
above operator if for any point $(x_0,t_0)$, the left-hand side of
\eqref{eq:viscobst} is nonpositive for all $C^2$-functions $\phi$
satisfying $\phi(x_0,t_0)=u(x_0,t_0)$ as well as $u(x,t)\le\phi(x,t)$
for all $(x,t)\in\Omega$ with $t<t_0$.


To begin with, we observe that the estimates on the penetration depth
derived in Section \ref{sect:penetration} imply that
$$u(x,t)\geq \psi(x)$$
for every $(x,t)\in \Om$. We can thus distinguish between points with
$u(x_0,t_0)> \psi(x_0)$ and points where the surface touches the
closure of the obstacle.

In the former case it is clearly enough to show that $u$ is locally a
viscosity solution of the graphical mean curvature flow equation
\beq \label{eq:graph-MCF} \dot u+\sqrt{1+|Du|^2}\cdot H=0.\eeq Given
such a point $(x_0,t_0)$ in which $u(x_0,t_0)>\psi(x_0)$, we observe
that in a space time neighbourhood also $u^i(x,t)\geq \psi(x)$ for $i$
sufficiently large, since these functions converge locally uniformly
to $u$.  Consequently the functions $u^i$ are classical solutions of
\eqref{eq:graph-MCF} in this neighbourhood.  As we have locally
uniform gradient estimates for the functions $u_i$, equation
\eqref{eq:graph-MCF} is uniformly parabolic, so arguing as in
\cite[Proposition 2.9]{CaffarelliCabre}, we obtain that the limit $u$
is indeed a viscosity solution to \eqref{eq:graph-MCF}.
\par
It remains to consider points $(x_0,t_0)$ with $
u(x_0,t_0)=\psi(x_0)$.  First of all, since the second argument in the
minimum in \eqref{eq:viscobst} is zero for every $C^2$ function $\phi$
with $\phi(x_0,t_0)=u(x_0,t_0)$, the condition that this minimum is
non-positive in the viscosity sense is clearly satisfied. It remains
to show that $\dot u+\sqrt{1+|Du|^2}\cdot H\geq 0$ holds in the
viscosity sense. But $\aleps\geq 0$, so the functions $u^i$ satisfy
this inequality classically on the whole domain of definition so that
passing to the limit as explained above implies that $u$ itself
satisfies the inequality in the viscosity sense. We conclude that $u$
is a viscosity solution to \eqref{eq:viscobst}.

\phantom{x}

The claimed estimate \eqref{est:thm-v-A-bound-lip-ini-data} follows
from \eqref{est:approx-sol}. For a $C^{1,1}_{\loc}$-initial
hypersurface we can furthermore derive bounds on the second
fundamental form up to $t=0$ from Lemma \ref{tU4g lambda U4v2 lem}.

\phantom{x}

Consider finally a point $(x,t)\in\Omega$ with $t>0$ that is not
contained in the contact set $\Gamma$, i.\,e.{} such that
$u(x,t)>\psi(x)$. By uniform convergence we also have $u^i>\psi$ in a
neighbourhood of $(x,t)$ for sufficiently large $i$. Thus $u^i$ evolves
by graphical mean curvature flow in this neighbourhood. As the $u^i$
satisfy locally uniform gradient estimates we may apply the interior
estimates of \cite[Theorems 3.1, 3.4]{EckerHuiskenInvent} and deduce
smoothness of $u$ in a smaller neighbourhood of $(x,t)$.
\end{proof}

\begin{proof}[Proof of Theorem \ref{thm2}]
  We proceed as in the proof of Theorem \ref{thm1} and consider
  approximate solutions $u_{\epsilon,R}^\delta$ as in Proposition
  \ref{prop:approx-sol-exist} but now with the initial and boundary values
  in \eqref{eq:approx-sol} replaced with
  $u(x,t)=u_0(x)\equiv\lim\limits_{|y|\to\infty}u_0(y)$ on $\partial
  B_R(0)\times[0,\infty)$ for large $R>0$ and $u(\cdot,0)=u_0*
  \eta_\delta$ in $B_R(0)$. Using large spheres near infinity as
  barriers, we can separate the evolving graph from the obstacle near
  infinity. Thus $u_{\epsilon,R}^\delta$ solves graphical mean
  curvature flow without additional terms due to the obstacle outside
  of a compact set that does not depend on $R$ but may grow in
  time. In this region, we can thus apply the a priori estimates of
  \cite{EckerHuiskenInvent} and obtain uniform bounds on arbitrary
  derivatives of $u_{\epsilon,R}^\delta$. \par As the additional term
  $\alpha_\epsilon^\delta$ is nonnegative, a hyperplane at height
  $\inf u_0-1$ acts as a lower barrier. Therefore $u_0$ can at most
  penetrate into a bounded subset of the obstacle and we can apply the
  maximum principle with $f_0$ equal to a constant in Lemma
  \ref{lemma:comp-surf-zwei-ff}. Then we obtain bounds on derivatives
  of $u_{\epsilon,R}^\delta$ by applying the maximum principle
  directly (i.\,e.{} without localising with $U-\ell$) to the
  evolution equations for $v$ of Lemma \ref{v evol}, for $G$ of Lemma
  \ref{lemma:mod-f} and to \eqref{est:higher-order} for higher order
  derivatives.  This is possible since far away from the origin those
  quantities are controlled by the estimates of
  \cite{EckerHuiskenInvent}, so that we can apply the maximum
  principle on compact sets. This implies spatial $C^2$-estimates that
  depend neither on $\epsilon,\,\delta$ nor $R$ and higher order
  estimates that depend only on $\epsilon$ but not on $\delta$ or
  $R$. \par
  Then arguing as in the proof of Proposition
  \ref{prop:approx-sol-exist} yields the analogue of this proposition,
  in particular estimate \eqref{est:approx-sol} on all of
  $B_R(0)\times[0,\infty)$. Thus the arguments of the proof of Theorem
  \ref{thm1} also apply to the present situation and yield the desired
  result.
\end{proof}

\section{Geometric interpretation: back to the original problem}
We finally discuss how the graphical solutions constructed in the
previous sections can lead to a notion of weak solutions for the
original problem of flowing a general (in particular not necessarily
graphical) hypersurface $N_0$ in $\R^{n+1}$ in the presence of an
obstacle $\Pro\subset \R^{n+1}$. We consider the case of a
\textit{one-sided} obstacle, intuitively speaking an obstacle such
that either all or none of its components are enclosed by the initial
hypersurface. This includes of course the special case of a connected
obstacle.

To be more precise, let $d_{N_0}$ be a continuous distance function to
$N_0$ which has non-vanishing gradient on $N_0$ (and thus changes sign
as we pass through $N_0$). We then ask that $d_{N_0}$ has constant
sign on all of $\Pro$, say $d_{N_0}\vert_{\P}<0$ and consider a
complete graphical initial hypersurfaces over $\Omega_0:=\{x\colon
d_{N_0}(x)<0\}$ and a complete graph over the obstacle as in
Definition \ref{def: ini data}. This construction requires no
regularity of the initial surface $N_0$ or the obstacle $\P$.

Let now $(\Omega,u)$ be the corresponding singularity resolving
solution whose existence for all times we have proven above. 

Let $\Omega_t$ be the time-slice of $\Omega$ at time $t$ as in
Definition \ref{def:gr-mcf-obst} \eqref{def sol domain}. Then
$M_t:=\graph(u(\cdot,t)\colon\Omega_t\to\R)$ is a complete
hypersurface and $(M_t)_{t\ge0}$ solves graphical mean curvature flow
respecting the obstacle, in particular, $u\ge\psi$. Thus $\Omega_t$
contains $\P$ and $\partial\Omega_t$ remains disjoint from the open
obstacle $\P$ for all times. Motivated by the results of M. S\'aez and
the second author, see in particular Proposition 9.2 of
\cite{OSMarielMCFwithoutSing} for further details, we can interpret
$(\Omega_t)_t$ as a weak solution to mean curvature flow with
obstacle.  The relation between this notion of a weak solution and the
level set formulation for mean curvature flow with obstacles, cf.{}
\cite{MercierObstacleViscosity}, will be analysed in future work.

\def\emph#1{\textit{#1}}
\bibliographystyle{amsplain} 

\end{document}